\theoremstyle{plain}
\newtheorem{theorem}{Theorem}[section]
\newtheorem{proposition}[theorem]{Proposition}
\newtheorem{cor}[theorem]{Corollary}
\newtheorem{def-thm}[theorem]{Definition-Theorem}
\newtheorem{lemma}[theorem]{Lemma}
\theoremstyle{definition}
\def\min{\mathop{\mathrm{min}}}
\def\CC{\mathbb C}
\def\RR{\mathbb R}
\def\PP{\mathbb P}
\begin{document}
\title[H. Cartan's theory for Riemann surfaces]{H. Cartan's theory for Riemann surfaces}
\author[X.J. Dong]
{Xianjing Dong}

\address{Academy of Mathematics and Systems Sciences \\ Chinese Academy of Sciences \\ Beijing, 100190, P. R. China}
\email{xjdong@amss.ac.cn}


\subjclass[2010]{30D35; 32H30.} \keywords{Nevanlinna theory;  Second Main Theorem; Holomorphic curve;  Bloch's theorem; Riemann surface;  Cartan's theory; Brownian motion.}
\date{}
\maketitle \thispagestyle{empty} \setcounter{page}{1}

\begin{abstract}
  We generalize the  H. Cartan's theory of holomorphic curves for  a general  open Riemann surface. Besides, 
a  vanishing theorem for  jet differentials and a Bloch's theorem  for  Riemann surfaces are   obtained.
\end{abstract}

\vskip\baselineskip

\setlength\arraycolsep{2pt}
\medskip

\section{Introduction}
\subsection{Main results}~

The value distribution theory \cite{Noguchi, ru} for holomorphic mappings has arisen strong research interest after R. Nevanlinna \cite{Nev} who founded two celebrated fundamental theorems for meromorphic functions in one complex variable in 1925.
  Many well-known results were obtained by extending  source manifolds and target manifolds, see H. Cartan \cite{Cart},  L. V. Ahlfors \cite{ahlfors}, Carlson-Griffiths \cite{gri}, Griffiths-King \cite{gri1}, E. I. Nochka \cite{nochka}, J. Noguchi \cite{Noguchi}, M. Ru \cite{ru, ru00, Ru-Sibony},  etc..  Particularly,  H. Cartan's theory for holomorphic curves is an  important branch of Nevanlinna theory, which was devised by
  H. Cartan in 1933. Let us review some key developments of this theory   related closely to our study of the  paper. 
  In 1970, H. Wu \cite{wu} gave a generalization of  H. Cartan's theory for the \emph{parabolic} Riemann surfaces (admitting a parabolic exhaustion function), see also  
 B. V. Shabat \cite{Shabat}.
  In 1983, E. I. Nochka \cite{nochka} established  the Second Main Theorem of holomorphic curves  into $\mathbb P^n(\mathbb C)$ intersecting  hyperplanes in $N$-subgeneral position.
 Recently in 2019,  Ru-Sibony \cite{Ru-Sibony} studied this theory for the discs in $\mathbb C.$
    
Due to the lack of  Green-Jensen   formula, 
 difficulties arise in extending  H. Cartan's theory to an arbitrary  Riemann surface by employing the classical approaches.
  So, 
we are  motivated by such  problem: What is  the H. Cartan's theory for Riemann surfaces?  
We aim to consider this problem following the stochastic technique initiated by T. K. Carne \cite{carne} and   developed by A. Atsuji \cite{at,atsuji}.
In this paper, we  give an extension of  H. Cartan's theory for a general Riemann surface. 
Note that the classical Second Main Theorem for  the discs was presented in terms of  incomplete metric induced from   Euclidean metric on $\mathbb C,$ see \cite{Ru-Sibony}.  
This incomplete metric produces a boundary term (main error term) in the Second Main Theorem, 
 and  hence it 
cannot describe 
  the value distribution  for holomorphic curves defined on  a general hyperbolic Riemann surface without boundary, 
  and  even it fails  for the  multi-connected 
    domains of $\mathbb C$ 
    though their  universal covering is the unit disc.  In the paper, we adopt a complete metric and  receive a Second Main Theorem of holomorphic curves defined on the  discs.      Instead of  the classical boundary condition, 
   we receive  a defect relation by means of a geometric condition.
    As applications, one also obtains a  vanishing theorem for  jet differentials as well as a Bloch's theorem for Riemann surfaces.
   In what follows, we introduce the main results. 
   
Let $S$ be an open (connected) Riemann surface. Due to the uniformization theorem,  one could equip $S$ with a complete  Hermitian metric $ds^2=2gdzd\bar{z}$   such that
 the 
   Gauss curvature
$K_S\leq0$  associated to $g,$  here $K_S$ is given  by
$$K_S=-\frac{1}{4}\Delta_S\log g=-\frac{1}{g}\frac{\partial^2\log g}{\partial z\partial\bar z}.$$
Clearly, $(S,g)$ is a complete K\"ahler manifold with  associated K\"ahler form
$$\alpha=g\frac{\sqrt{-1}}{2\pi}dz\wedge d\bar{z}.$$
\ \ \ \  Fix $o\in S$ as a reference point. Denote by $D(r)$  the geodesic disc centered at $o$ with radius $r,$ and by $\partial D(r)$  the boundary of $D(r).$
By Sard's theorem, $\partial D(r)$ is a submanifold of $S$ for almost all $r>0.$
Set
\begin{equation}\label{kappa}
  \kappa(t)=\min\big\{K_S(x): x\in \overline{D(t)}\big\},
\end{equation}
which is a non-positive, decreasing and continuous function  on $[0,\infty).$

 \begin{theorem}\label{thm} Let $f:S\rightarrow\mathbb P^n(\mathbb C)$ be a linearly non-degenerate holomorphic curve.
  Let $H_1,\cdots,H_q$ be hyperplanes of $\mathbb P^n(\mathbb C)$ in $N$-subgeneral position.
 Then
$$(q-2N+n-1)T_f(r)
 \leq \sum_{j=1}^qN_f^{[n]}(r,H_j)+O\Big(\log T_f(r)-\kappa(r)r^2+\log^+\log r\Big) \big{\|},
$$
 where $``\|"$ means  the  inequality holds outside a 
 subset of $r$ of  finite Lebesgue measure.
 \end{theorem}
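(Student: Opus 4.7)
The plan is to combine the classical Cartan--Nochka approach with the stochastic calculus developed by Carne and Atsuji, the latter substituting for the Green--Jensen formula which is unavailable on a general open Riemann surface. The argument splits into two essentially independent pieces: a Cartan identity reducing the Second Main Theorem to a proximity bound on a Wronskian, and a stochastic logarithmic derivative lemma on $(S,g)$ that supplies this bound with the curvature correction $-\kappa(r)r^2$.

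For the algebraic part, I would invoke Nochka's theorem to produce weights $\omega_j\in(0,1]$ and a Nochka constant $\tilde\omega>0$ with $\sum_j\omega_j=\tilde\omega(q-2N+n-1)+n+1$, such that every Nochka-selected $(n+1)$-subset of $\{H_j\}$ is in general position. Taking a local reduced representation $(f_0,\ldots,f_n)$ of $f$ and linear forms $L_j$ defining $H_j$, the Cartan identity applied to the Wronskian $W=W(f_0,\ldots,f_n)$ yields the pointwise inequality
\begin{equation*}
\sum_{j=1}^q\omega_j\log\frac{\|f\|\,\|H_j\|}{|L_j(f)|}\;\leq\;\log\frac{\|f\|^{n+1}}{|W|}\;+\;\log^+\!\left|\frac{W}{\prod_{j}L_j(f)^{\omega_j}}\right|\;+\;O(1).
\end{equation*}
Integrating this against the harmonic measure on $\partial D(r)$---the stochastic substitute for the classical Poisson--Jensen formula, obtained by applying the It\^o--Dynkin formula to the relevant logarithms---and combining with the First Main Theorem and the standard Wronskian vanishing bound $N_W(r)\geq\sum_j\omega_j(N_f(r,H_j)-N_f^{[n]}(r,H_j))$, gives the skeleton $(q-2N+n-1)T_f(r) \leq \sum_j N_f^{[n]}(r,H_j)$ modulo the proximity of the Wronskian.

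The main obstacle is the logarithmic derivative estimate for the Wronskian on $(S,g)$. Following Atsuji, let $X_t$ be Brownian motion on $(S,g)$ started at $o$ with exit time $\tau_r$ from $D(r)$. The It\^o--Dynkin formula applied to $\log\|f(X_t)\|^2$ and $\log|W(X_t)|^2$ rewrites $T_f(r)$, $N_W(r)$, and the Wronskian proximity as expectations at time $\tau_r$. Concavity of the logarithm together with heat-kernel estimates on $D(r)$ then should deliver
\begin{equation*}
m\!\left(r,\frac{W}{\prod_j L_j(f)^{\omega_j}}\right) = O\!\left(\log^+T_f(r)-\kappa(r)r^2+\log^+\log r\right)\,\big\|.
\end{equation*}
The curvature correction $-\kappa(r)r^2$ enters through Laplacian comparison on $D(r)$: since $K_S\geq\kappa(r)$ on $\overline{D(r)}$, the Brownian transition density on $D(r)$ and the harmonic measure on $\partial D(r)$ acquire factors of order $e^{-c\kappa(r)r^2}$, whose logarithms feed directly into the error.

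The delicate point is tracking the constant in front of $\kappa(r)r^2$ through variable curvature: one cannot compare to a constant-curvature model directly, and must instead bootstrap by controlling the expected exit time $\mathbb{E}[\tau_r]$ and the hitting distribution on $\partial D(r)$ using only the pointwise lower bound $\kappa(r)$, then feed these into a Kato-class estimate for $\log|W|$. Once the logarithmic derivative lemma is secured, the exceptional set is absorbed by the stochastic Borel growth lemma in Atsuji's form, and the pieces assemble into the stated inequality.
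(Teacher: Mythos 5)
Your overall plan---Cartan--Nochka algebra plus a stochastic logarithmic derivative lemma replacing Green--Jensen---is indeed the paper's strategy, but there are two concrete issues, one structural and one about where the curvature error actually comes from.

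First, on a general open Riemann surface the Wronskian $W(f_0,\dots,f_n)$ is not globally meaningful until you fix a differentiation operator. The paper chooses a nowhere-vanishing holomorphic vector field $\mathfrak X$ (which exists by Gunning--Narasimhan) and works throughout with $W_{\mathfrak X}$ and the logarithmic Wronskian $\Delta_{\mathfrak X}$; all the Nochka-weight manipulations, the truncated counting bound, and the LDL are stated relative to $\mathfrak X$. You write $W(f_0,\dots,f_n)$ as if it were canonical; without the vector field the pointwise inequality and the divisor comparison you quote do not make sense on $S$. This is not cosmetic, because the extra term $\log^+\|\mathfrak X\|$ that appears when you estimate $m\bigl(r,\mathfrak X(\psi)/\psi\bigr)$ is exactly what produces the leading error.

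Second, and more seriously, your attribution of the $-\kappa(r)r^2$ term to "Laplacian comparison making the harmonic measure acquire factors $e^{-c\kappa(r)r^2}$" is incorrect. The harmonic-measure and Green-function comparison in the paper (via the Jacobi-type ODE $G''+\kappa G=0$ and Atsuji's Green-function lemma) gives the bound $G(r)\le re^{r\sqrt{-\kappa(r)}}$, so the Calculus Lemma contributes only $O\bigl(r\sqrt{-\kappa(r)}\bigr)$ after taking logarithms---this is strictly smaller than $-\kappa(r)r^2$ in the regime where curvature matters. The dominant curvature error in the LDL comes instead from the term $C=\tfrac12\mathbb E_o[\log^+\|\mathfrak X(X_{\tau_r})\|^2]$: writing $\|\mathfrak X\|^2=g|a|^2$ with $a$ holomorphic, Dynkin's formula and the identity $K_S=-\tfrac14\Delta_S\log g$ give $C\le-\kappa(r)\,\mathbb E_o[\tau_r]+O(1)$, and then the simple comparison estimate $\mathbb E_o[\tau_r]\le 4r^2$ closes the loop. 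Your sketch omits this term entirely and instead worries about "tracking the constant in front of $\kappa(r)r^2$ through variable curvature" in the heat kernel, which is not where the difficulty lies. If you carried out your plan as written, you would either obtain the sharper error $O\bigl(r\sqrt{-\kappa(r)}\bigr)$ (which the proof does not actually give) or, more likely, discover you have no handle on the $\log^+\|\mathfrak X\|$ contribution at all. You also need the lift to the universal cover since the simple-connectedness of $S$ is used both in the harmonic-measure comparison ($d\pi^r_o\le\tfrac{1}{2\pi r}d\sigma_r$) and in $\mathbb E_o[\tau_r]\le 4r^2$ (to kill the local-time term in It\^o's formula); the paper invokes this reduction before proving Lemma 4.9.
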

 The term $\kappa(r)r^2$  comes  from  geometric nature   of  $S.$
 Let $S=\mathbb C,$  we obtain  $\kappa(r)\equiv0$ and $T_f(r)\geq O(\log r)$ as $r\rightarrow\infty.$ Thus, it covers  Nochka's result: 
\begin{cor}[Nochka, \cite{nochka}]\label{}
 Let $H_1,\cdots,H_q$ be hyperplanes of $\mathbb P^n(\mathbb C)$ in $N$-subgeneral position.
Let $f:\mathbb C\rightarrow\mathbb P^n(\mathbb C)$ be a linearly non-degenerate holomorphic curve.  Then
$$(q-2N+n-1)T_f(r)\leq \sum_{j=1}^qN_f^{[n]}(r,H_j)+O\big(\log T_f(r)\big)  \big{\|}.$$
\end{cor}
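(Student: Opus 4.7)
The plan is to obtain the corollary as a direct specialization of Theorem \ref{thm} to $S=\mathbb C$. Equip $\mathbb C$ with the flat Euclidean metric corresponding to $g\equiv 1$, i.e.\ $ds^2 = 2\,dz\,d\bar z$; this is complete and has Gauss curvature $K_S\equiv 0$. With this choice, the geodesic disc $D(r)$ coincides, up to a harmless constant rescaling of the radius, with the Euclidean disc used in Nochka's classical formulation, and the function $\kappa(t)$ defined in \eqref{kappa} is identically zero. Consequently, the contribution $-\kappa(r)r^2$ to the error term of Theorem \ref{thm} vanishes.

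Next, I would observe that for a linearly non-degenerate holomorphic curve $f:\mathbb C\to\mathbb P^n(\mathbb C)$ the characteristic function satisfies $T_f(r)\ge c\log r$ for some $c>0$ and all sufficiently large $r$. Indeed, if $f$ is rational of degree $d$ then $T_f(r)=d\log r+O(1)$, while if $f$ is transcendental then $T_f(r)/\log r\to\infty$; in either case $\log^+\log r = O(\log T_f(r))$, so this term can be absorbed into the remaining $O(\log T_f(r))$. Combining both reductions, the inequality supplied by Theorem \ref{thm} collapses into precisely the claimed Nochka inequality.

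The only step requiring any care is the lower bound $T_f(r)\gtrsim\log r$, which is a standard consequence of the fact that a linearly non-degenerate curve into $\mathbb P^n(\mathbb C)$ with $n\ge 1$ is in particular non-constant, together with classical asymptotics of the Nevanlinna characteristic on $\mathbb C$. This presents no real obstacle; the rest of the argument is a verbatim substitution into Theorem \ref{thm}.
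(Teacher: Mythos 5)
Your proposal is correct and matches the paper's own (very brief) derivation: set $S=\mathbb{C}$ with the flat metric so $\kappa\equiv 0$, then use $T_f(r)\gtrsim\log r$ for a non-constant curve on $\mathbb{C}$ to absorb $\log^+\log r$ into $O(\log T_f(r))$, and substitute into Theorem \ref{thm}. Nothing further is needed.
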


If $S$  is the Poincar\'e  disc $\mathbb D,$   then     
$\kappa(r)\equiv-1.$  We  can receive a more precise estimate $O(r)$ instead of $O(r^2).$ 
 \begin{cor}\label{ccc1}
 Let $f:\mathbb D\rightarrow\mathbb P^n(\mathbb C)$ be a linearly non-degenerate holomorphic curve.
 Let $H_1,\cdots,H_q$ be hyperplanes of $\mathbb P^n(\mathbb C)$ in $N$-subgeneral position.
 Then
$$(q-2N+n-1)T_f(r)\leq \sum_{j=1}^qN_f^{[n]}(r,H_j)+O\Big(\log T_f(r)+r\Big)  \big{\|}.$$
\end{cor}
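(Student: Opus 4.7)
The plan is to deduce Corollary~\ref{ccc1} from Theorem~\ref{thm} by sharpening the step in the proof of that theorem which produces the $-\kappa(r)r^2$ term. A naive specialization to the Poincar\'e disc gives $\kappa(r)\equiv -1$ and therefore an error $O(\log T_f(r)+r^2+\log^+\log r)$; the work is to replace $r^2$ by $r$.

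First, I would locate where the factor $-\kappa(r)r^2$ appears in the proof of Theorem~\ref{thm}. Following the Carne--Atsuji stochastic method, the natural source is an estimate of the form $\log\mathrm{Area}(D(r))\leq C(-\kappa(r))r^2+O(1)$, obtained by Bishop--Gromov comparison with a simply connected space form of curvature bounded below by $\kappa(r)$. This bound in turn controls either a heat-kernel quantity or a term arising from a Jensen-type formula applied along Brownian paths, and it is precisely what feeds the $r^2$ into the final error.

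Second, I would replace this coarse bound by the exact identity available on $\mathbb{D}$: with the complete metric of constant curvature $-1$, the hyperbolic area of $D(r)$ equals $2\pi(\cosh r-1)$, so $\log\mathrm{Area}(D(r))=r+O(1)$. Running the same argument as in Theorem~\ref{thm} but with this exact value substituted at every occurrence of $-\kappa(r)r^2$ changes the $r^2$ in the error to $r$, yielding the claimed defect relation. One should also check that the expected exit time of Brownian motion from $D(r)$ in $\mathbb{D}$, which grows linearly in $r$ rather than quadratically as it would for a Euclidean disc, is what makes the substitution compatible with the probabilistic part of the argument.

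The main obstacle is making sure that every appearance of $-\kappa(r)r^2$ in the proof of Theorem~\ref{thm} really comes from a comparison estimate of volume or heat-kernel type, and not from an intrinsic $r^2$ scaling (for instance a variance computation in It\^o calculus) that would survive even for constant curvature. If such an intrinsic source is hidden in the stochastic step, an additional argument exploiting the explicit form of the Poincar\'e heat kernel or the known linear-in-$r$ hyperbolic exit time would be required to push the bound below $r^2$. Modulo this verification, Corollary~\ref{ccc1} reduces to a single substitution in the proof of Theorem~\ref{thm}.
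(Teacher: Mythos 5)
Your plan correctly identifies that the corollary should follow from the proof of Theorem~\ref{thm} by sharpening the specific step that produces the factor $-\kappa(r)r^2$, and your backup observation about the linear (not quadratic) growth of the hyperbolic expected exit time $\mathbb{E}_o[\tau_r]$ does point in the right direction. However, your primary diagnosis of where the $r^2$ comes from is incorrect, and this leaves a real gap. In the paper the factor $-\kappa(r)r^2$ does not arise from a Bishop--Gromov type area estimate $\log\mathrm{Area}(D(r))\leq C(-\kappa(r))r^2$. The volume comparison (via the function $G$ in $(\ref{v2})$) only feeds $r\sqrt{-\kappa(r)}$ into the error, which for the Poincar\'e disc is already $O(r)$, so that piece needs no improvement. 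The genuine source of $r^2$ is the term $C=\tfrac12\mathbb{E}_o\big[\log^+\|\mathfrak{X}(X_{\tau_r})\|^2\big]$ in the proof of Theorem~\ref{ldl1}, which is bounded in the general case by $-\kappa(r)\,\mathbb{E}_o[\tau_r]$ followed by the crude bound $\mathbb{E}_o[\tau_r]\leq 4r^2$ of Proposition~\ref{yyyy}. Your own concern about an ``intrinsic $r^2$ scaling hidden in the stochastic step'' is precisely what happens here: it is the exit-time bound, not an area bound.

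The paper's actual route is not to sharpen $\mathbb{E}_o[\tau_r]$ at all. Instead, for $S=\mathbb{D}$ with the Poincar\'e metric and $\mathfrak{X}=\partial/\partial z$, it evaluates $C$ directly: by rotational symmetry the harmonic measure on $\partial D(r)$ is $d\theta/2\pi$, the Euclidean radius of $\partial D(r)$ is $(e^r-1)/(e^r+1)$, and $\log\|\mathfrak{X}\|^2=\log g=\log\frac{2}{(1-|z|^2)^2}$ is constant on $\partial D(r)$ and equal to $2r+O(1)$, so $C\leq r+O(1)$ by a one-line calculation. This estimate then propagates through Theorem~\ref{ldl2}, Proposition~\ref{hello1}, and Lemma~\ref{ldl3} to give the stated corollary. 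Your alternative of proving $\mathbb{E}_o[\tau_r]=O(r)$ in $\mathbb{D}$ and feeding it into $-\kappa(r)\mathbb{E}_o[\tau_r]$ would also work and is perhaps more robust, but as written your proposal never commits to it, never verifies the linear exit time, and hangs on a first guess (the area estimate) that does not locate the offending term. To close the gap you need to actually trace the $r^2$ to term $C$ in Theorem~\ref{ldl1}, and then either carry out the explicit boundary-integral computation the paper uses, or prove $\mathbb{E}_o[\tau_r]=O(r)$ for hyperbolic Brownian motion and substitute it into the inequality $C\leq -\kappa(r)\mathbb{E}_o[\tau_r]+O(1)$.
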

The value distribution of holomorphic curves defined on a disc was studied by Ru-Sibony \cite{Ru-Sibony}. But, differently here, 
we  use a complete metric while Ru-Sibony used the incomplete one  induced from $\mathbb C.$ We here point out that our result  has the same accuracy with Ru-Sibony's one. 
  It's sufficient to compare the main error terms. Let $r(z)$ denote the poincar\'e distance of $z$ from $0$ for $z\in\mathbb D,$ and let $r, r_E$ be the Poincar\'e and Euclidean radius respectively. Note that 
  \begin{equation}\label{pop}
r(z)=\log\frac{1+|z|}{1-|z|}.
\end{equation}
In the classical case, the main  error term is 
$O(\log(1-r_E)^{-1}).$
By (\ref{pop})
$$\log\frac{1}{1-r_E}\leq r=\log\frac{1+r_E}{1-r_E}<\log\frac{1}{1-r_E}+\log2.$$
Therefore, the two main error terms are equivalent.

\begin{cor}[Defect relation]\label{defect1} Assume the same conditions as in Theorem $\ref{thm}$.
  If $f$ satisfies the growth condition
$$\liminf_{r\rightarrow\infty}\frac{\kappa(r)r^2}{T_f(r)}=0,$$
  then
  $$\sum_{j=1}^q\delta^{[n]}_f(H_j)\leq 2N-n+1.$$
  In particular, if $S$  is the Poincar\'e  disc, then  the  conclusion holds  only if 
  $$\limsup_{r\rightarrow\infty}\frac{r}{T_f(r)}=0.$$
\end{cor}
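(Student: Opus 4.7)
The plan is to deduce the defect relation directly from Theorem~\ref{thm} (and from Corollary~\ref{ccc1} in the hyperbolic specialization) by dividing by $T_f(r)$, checking that each piece of the error becomes $o(1)$ under the stated growth hypothesis, and then passing to $\limsup$ on both sides.

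First, I would apply Theorem~\ref{thm}: for all $r$ outside a set $E$ of finite Lebesgue measure,
$$
q - 2N + n - 1 \;\leq\; \sum_{j=1}^q \frac{N_f^{[n]}(r,H_j)}{T_f(r)} + O\!\left( \frac{\log T_f(r)}{T_f(r)} - \frac{\kappa(r)r^2}{T_f(r)} + \frac{\log^+\log r}{T_f(r)} \right).
$$
Because $f$ is linearly non-degenerate one has $T_f(r)\to\infty$, so $\log T_f(r)/T_f(r)\to 0$ and $\log^+\log r/T_f(r)\to 0$. For the geometric piece, $\kappa(r)\leq 0$ forces $\kappa(r)r^2/T_f(r)$ to be non-positive, and a non-positive function whose liminf is zero must itself tend to zero. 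Thus the hypothesis $\liminf_r\kappa(r)r^2/T_f(r)=0$ upgrades to $\lim_r\kappa(r)r^2/T_f(r)=0$, and the entire error term is $o(1)$ along $r\to\infty$, $r\notin E$.

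Next, I would take $\limsup$ of both sides as $r\to\infty$ with $r\notin E$. Since restricting the parameter to the complement of $E$ can only decrease a $\limsup$, and since $\limsup$ of a finite sum is bounded by the sum of $\limsup$'s,
$$
q - 2N + n - 1 \;\leq\; \sum_{j=1}^q \limsup_{r\to\infty}\frac{N_f^{[n]}(r,H_j)}{T_f(r)} \;=\; \sum_{j=1}^q\bigl(1-\delta_f^{[n]}(H_j)\bigr) \;=\; q - \sum_{j=1}^q \delta_f^{[n]}(H_j),
$$
which rearranges to the desired $\sum_{j}\delta_f^{[n]}(H_j)\leq 2N-n+1$. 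For the Poincar\'e disc case, I would rerun the identical argument with Corollary~\ref{ccc1} in place of Theorem~\ref{thm}; the geometric error becomes the better $O(r)$, and the hypothesis needed for that term to be $o(T_f(r))$ is precisely $\limsup_{r\to\infty}r/T_f(r)=0$.

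The analytic content all lives in Theorem~\ref{thm}, so there is no serious obstacle here, only bookkeeping. The one conceptual point worth flagging is the sign observation promoting the $\liminf$ hypothesis to an honest limit; once that is noted, the exceptional set $E$ is inert on the $\limsup$ side and the conclusion drops out by a division.
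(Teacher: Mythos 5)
Your proposal is correct and follows exactly the route the paper intends, which it compresses into the single sentence ``By definition, we can derive Corollary~\ref{defect1} immediately.'' You rightly identify the one non-trivial observation: since $\kappa\leq 0$ the quantity $\kappa(r)r^2/T_f(r)$ is $\leq 0$, hence $\limsup\leq 0$, so the hypothesis $\liminf=0$ forces the genuine limit to be $0$; this makes the geometric error term $o(1)$ along \emph{all} large $r\notin E$, not just a subsequence, and the exceptional set then becomes harmless because a $\limsup$ restricted to the complement of $E$ is bounded above by the full $\limsup$ appearing in the definition of $\delta_f^{[n]}(H_j)$. The remainder is the standard division-and-$\limsup$ bookkeeping, including the subadditivity of $\limsup$ over the finite sum of counting functions, and the Poincar\'e-disc variant substitutes Corollary~\ref{ccc1} with its $O(r)$ error and requires $\limsup r/T_f(r)=0$ as you say. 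The only implicit point you assert without argument is $T_f(r)\to\infty$; this is needed for $\log T_f(r)/T_f(r)\to 0$ and is not purely a consequence of linear non-degeneracy on an arbitrary $S$, but the paper makes the same silent assumption (and in the hyperbolic and Euclidean cases the growth condition or the estimate $T_f(r)\geq O(\log r)$ supplies it), so this is a shared imprecision rather than a gap in your argument.
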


We introduce another two   results. Let $M$ be  a compact complex  manifold with   an ample divisor $A$ on $M.$
\begin{theorem}\label{thm2}
 Let $\omega$ be a logarithmic $k$-jet differential on $M$ which vanishes on $A.$
 Let $f:S\rightarrow M$ be a holomorphic curve
such that $f(S)$ is disjoint from the log-poles of $\omega.$ If $f$ satisfies  
  the growth condition
$$\liminf_{r\rightarrow\infty}\frac{\kappa(r)r^2}{T_{f,A}(r)}=0,$$
 then $f^*\omega\equiv0$ on $S.$   In particular, if $S$  is the Poincar\'e  disc,  then the conclusion holds only if 
  $$\limsup_{r\rightarrow\infty}\frac{r}{T_{f,A}(r)}=0.$$
\end{theorem}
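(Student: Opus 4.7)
The plan is to argue by contradiction: assume $f^*\omega\not\equiv 0$, and use the Nevanlinna machinery underlying Theorem \ref{thm} --- in particular, a logarithmic derivative lemma on $S$ derived from the stochastic Green--Jensen formula of Carne and Atsuji --- to force
$$T_{f,A}(r)=O\big(\log T_{f,A}(r)-\kappa(r)r^2+\log^+\log r\big)\big{\|},$$
which contradicts the hypothesis $\liminf_{r\to\infty}\kappa(r)r^2/T_{f,A}(r)=0$.

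\emph{Setup.} First I would fix a holomorphic section $\sigma_A\in H^0(M,\mathcal{O}(A))$ cutting out $A$ together with a smooth Hermitian metric $h_A$ on $\mathcal{O}(A)$. Since $\omega$ vanishes on $A$, the quotient $\widetilde\omega:=\omega/\sigma_A$ is a well-defined logarithmic $k$-jet differential on $M$ whose log-poles coincide with those of $\omega$. Because $f(S)$ avoids those log-poles,  $f^*\widetilde\omega$ is holomorphic on $S$, so
$$f^*\omega=(\sigma_A\circ f)\cdot f^*\widetilde\omega$$
is a nonzero holomorphic symmetric $m$-differential on $S$ (with $m$ the weighted degree of $\omega$) whose zero divisor dominates $f^*A$.

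\emph{Main estimate.} In local affine trivializations of $M$ the pullback $f^*\widetilde\omega$ is a polynomial in the components of $f$ and their holomorphic derivatives up to order $k$. Applying the logarithmic derivative lemma for Riemann surfaces (the Atsuji-style bound supplied by the proof of Theorem \ref{thm}) termwise bounds the Nevanlinna characteristic of $f^*\widetilde\omega$, viewed as an $m$-symmetric differential on $S$, by $O(\log T_f(r)-\kappa(r)r^2+\log^+\log r)\big{\|}$. Next I would apply the stochastic Green--Jensen formula on $S$ to $f^*\omega$ itself, splitting
$$\log|f^*\omega|_g=\log\|\sigma_A\circ f\|_{h_A}+\log|f^*\widetilde\omega|_{g\otimes h_A^{-1}}.$$
The boundary integral of the first summand equals $-m_f(r,A)+O(1)$ by the First Main Theorem for $f$ relative to $A$; the second is absorbed into the error term above; the interior curvature contribution from the canonical bundle of $S$ is $O(|\kappa(r)|r^2)$, controlled by $\kappa(r)$ and the area of $D(r)$; and the zero-count term dominates $N_f(r,A)$ since the divisor of $f^*\omega$ contains $f^*A$. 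Rearranging and using that $T_f(r)\asymp T_{f,A}(r)$ (valid since $A$ is ample on the compact $M$) yields
$$T_{f,A}(r)=N_f(r,A)+m_f(r,A)+O(1)\leq O\big(\log T_{f,A}(r)-\kappa(r)r^2+\log^+\log r\big)\big{\|},$$
after which the growth hypothesis gives the desired contradiction. The Poincar\'e-disc refinement (error $O(r)$ instead of $O(r^2)$) follows since $\kappa\equiv-1$ there, exactly as in Corollary \ref{ccc1}.

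\emph{Main obstacle.} The hard part will be the termwise LLD on $S$ for the higher-order derivatives entering $\widetilde\omega$, together with a Green--Jensen formula on symmetric $m$-differentials precise enough that the curvature contribution fits into the error budget $-\kappa(r)r^2$ without loss. The scalar LLD on $S$ from the paper's Brownian-motion methodology is the key input; iterating it through the $k$ derivative orders while absorbing cross-terms, and verifying that the Ricci contribution does not exceed the allowed curvature term, is the main technical hurdle.
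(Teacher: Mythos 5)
Your proposal takes a genuinely different route from the paper's. The paper's proof invokes the Second Main Theorem (Theorem~\ref{thm}) to obtain a lower bound of the form $(1-\epsilon)T_{f,A}(r)\leq N_f(r,A)+o(T_{f,A}(r))$, then combines Jensen's formula with the jet-differential LDL (Theorem~\ref{xxx}) to bound $N_f(r,A)\leq m(r,\omega(J_{k,\mathfrak X}f))+O(1)=o(T_{f,A}(r))$, yielding the contradiction. You instead bypass Cartan--Nochka entirely: writing $\widetilde\omega=\omega/\sigma_A$, you apply the Jensen/Dynkin formula directly to the scalar $\omega(J_{k,\mathfrak X}f)=\|\sigma_A\circ f\|_{h_A}\cdot|\widetilde\omega(J_{k,\mathfrak X}f)|_{h_A^{-1}}$, pick up $-m_f(r,A)$ and the zero-count dominating $N_f(r,A)$ simultaneously, and conclude via the First Main Theorem that $T_{f,A}(r)=m_f(r,A)+N_f(r,A)+O(1)\leq O(\log T_{f,A}(r)-\kappa(r)r^2+\log^+\log r)$. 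This is the McQuillan/Siu-style ``tautological inequality'' argument; it is more economical, needing only the FMT and the LDL, whereas the paper's route pays for the use of Theorem~\ref{thm} but stays closer to its own hyperplane machinery. What your argument does require, and what the paper's Theorem~\ref{xxx} does not literally supply, is an LDL-type bound for $\widetilde\omega$, a logarithmic $k$-jet differential \emph{with values in $\mathcal O(-A)$}: you need $\int_{\partial D(r)}\log^+|\widetilde\omega(J_{k,\mathfrak X}f)|_{h_A^{-1}}\,d\pi_o^r\leq O(\log T_{f,A}(r)-\kappa(r)r^2+\log^+\log r)\,\|$. This does follow from the proof of Theorem~\ref{xxx} --- since $\omega$ vanishes on $A$, the local coefficient functions of $\widetilde\omega=\omega/\sigma_A$ remain holomorphic, and $\|e^*_\lambda\|_{h_A^{-1}}$ is bounded on each $U_\lambda$ --- but it should be stated and checked, not assumed. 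One small conceptual slip in your accounting: the ``interior curvature contribution'' you describe is not an extra term coming from viewing $f^*\omega$ as a section of $(T_S^*)^{\otimes m}$. Applying Jensen/Dynkin to the scalar function $\omega(J_{k,\mathfrak X}f)$ produces no Ricci term at all; the $-\kappa(r)r^2$ contribution lives entirely inside the jet-differential LDL, where the $\log^+\|\mathfrak X\|^2$ term is controlled by Proposition~\ref{yyyy} as in (\ref{ok}). If you keep the scalar viewpoint throughout, the bookkeeping closes cleanly and you avoid the risk of counting the curvature twice.
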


Theorem \ref{thm2} gives an extension of the classical vanishing theorem  referred to Green-Griffiths \cite{g-g}, Siu-Yeung \cite{s-y}
and Demailly \cite{Dem}, etc..  Recently, Ru-Sibony \cite{Ru-Sibony}   proved a version for  discs  with incomplete metric.

Let $T$ be an $n$-dimensional complex torus with the universal covering $\CC^n$. The
standard coordinate $z=(z_1,\dots,z_m)$ of $\CC^n$ induces a global coordinate on $T$,
still denoted by $z$. Denote
$\beta:=dd^c\|z\|^2$ which is a positive $(1,1)$-form on $T$. As a consequence of Theorem \ref{thm2}, a generalized Bloch's theorem can be obtained
as follows
\begin{theorem}\label{thm4}
Let $f:S\to T$ be a
 holomorphic curve such that
\begin{equation}\label{th3}
\liminf_{r\to \infty}\frac{\kappa(r)r^2}{T_{f,\beta}(r)} = 0.
\end{equation}
 Then either $\overline{f(S)}$ is the translate of a subtorus of $T,$ or there exists a
variety $W$ of general type  and a mapping $R:\overline{f(S)}\to W$ such that $R\circ f$
does not satisfy $(\ref{th3})$. In particular, if $S$  is the Poincar\'e  disc,  then the conclusion holds only if 
  $$\limsup_{r\rightarrow\infty}\frac{r}{T_{f,\beta}(r)}=0.$$
\end{theorem}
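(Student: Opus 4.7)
The plan is to combine three ingredients: the Ueno/Kawamata structure theorem for subvarieties of complex tori, the existence of jet differentials on varieties of general type, and the vanishing theorem (Theorem \ref{thm2}) proved earlier in the paper. The goal is to reduce to a situation where Theorem \ref{thm2} can be contraposed.

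First I would form the Zariski closure $X = \overline{f(S)}$ in $T$, which is an analytic (hence, by GAGA on the abelian variety, algebraic) subvariety. Consider its translation stabilizer
$$T' = \bigl\{t\in T:X+t=X\bigr\},$$
and let $T_0$ denote its connected component of the identity, which is a subtorus of $T$. The quotient $Y = X/T_0$ embeds into the abelian variety $T/T_0$, and by construction has trivial stabilizer. By Ueno's theorem on subvarieties of abelian varieties, either $Y$ is a single point --- in which case $X$ is a translate of the subtorus $T_0$ and the first alternative of the theorem holds --- or $\dim Y\geq 1$ and $Y$ is of general type.

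Assume then that $Y$ has positive dimension. Let $W$ be a smooth projective model of $Y$, obtained by resolving singularities, and let $R:\overline{f(S)}\to W$ be the induced map (the rational map from $X$ to $W$ is regular on the open set where resolution is an isomorphism, which contains the generic point of $f(S)$). Since $W$ is of general type, the construction of Green--Griffiths--Demailly produces, for $k$ sufficiently large, a non-zero (logarithmic) $k$-jet differential $\omega$ on $W$ vanishing along an ample divisor $A$. Because $R\circ f$ has Zariski-dense image in $W$ by the definition of $X$ and $R$, the pullback $(R\circ f)^{*}\omega$ is not identically zero on $S$.

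Now I would apply Theorem \ref{thm2} in contrapositive form: since $(R\circ f)^{*}\omega\not\equiv 0$, the growth hypothesis
$$\liminf_{r\to\infty}\frac{\kappa(r)r^2}{T_{R\circ f,A}(r)}=0$$
must fail. Since the growth condition in \eqref{th3} with respect to $\beta$ and the one with respect to $A$ are comparable (via the functoriality $T_{R\circ f,A}(r)\leq C\, T_{f,\beta}(r)+O(1)$ coming from $R^{*}A$ pulled back to $T$ via the quotient), this yields the required $W$ and $R$. For the Poincaré disc, I would invoke the sharper error term $O(r)$ from Corollary \ref{ccc1} to replace $\kappa(r)r^{2}$ by $r$ in the same argument.

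I expect the main obstacle to be the construction in the middle paragraph --- producing a jet differential on the general-type variety $W$ whose pullback under $R\circ f$ is non-zero. The existence of such differentials vanishing on an ample divisor is the hard part (Green--Griffiths--Demailly), while the non-vanishing of the pullback requires Zariski density of $R(f(S))$ in $W$, which follows formally from the definition of $X$ and $Y$, together with the fact that $\omega$ does not vanish identically on any Zariski-dense subset of $W$.
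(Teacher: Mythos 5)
Your overall outline (take the Zariski closure, quotient by the stabilizer, invoke Ueno to get a general-type quotient $W$, then run a vanishing theorem against a jet differential) matches the strategy of the paper at a high level, but the central step is wrong, and it is exactly the step the paper works hardest to get around.

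The gap is in the sentence ``Because $R\circ f$ has Zariski-dense image in $W$ \dots the pullback $(R\circ f)^{*}\omega$ is not identically zero on $S$.'' A $k$-jet differential $\omega$ is not a function on $W$; it is a section over the $k$-jet space of $W$, and its zero locus is a subvariety of that jet space (or of its projectivization), not of $W$. Zariski density of $R(f(S))$ in $W$ says nothing about the position of the image of the $k$-jet lift $J_{k}(R\circ f)(S)$ inside the jet space. That image can perfectly well lie in the divisor $\{\omega=0\}$ while still projecting densely to $W$; in that case $(R\circ f)^{*}\omega\equiv 0$ and the contrapositive of Theorem~\ref{thm2} gives you nothing. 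You cannot avoid this by appealing to ``$\omega$ does not vanish on any Zariski-dense subset of $W$,'' because the relevant Zariski-density is in the jet space $X_{k}$, not in $W$.

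This is precisely why the paper does not try to produce a jet differential on $W$ via a Green--Griffiths--Demailly type existence theorem. Instead it introduces the Zariski closure $X_{k}$ of the image of the $k$-jet lift inside $P(J_{k}(T,\mathfrak{X}))\cong T\times\mathbb{P}^{nk-1}(\mathbb{C})$ and splits into two cases according to whether the second projection $\pi_{k}\colon X_{k}\to\mathbb{P}^{nk-1}(\mathbb{C})$ has positive-dimensional generic fibers for every $k$ (handled by the P\u{a}un--Sibony stabilizer lemma, Lemma~\ref{lem:+dimfiber}, contradicting general type) or is generically finite for some $k$ (handled by the Ru--Sibony lemma, Lemma~\ref{prop:existjet}, which crucially produces $\omega$ with $\omega\not\equiv 0$ \emph{on $X_{k}$}, not merely on $W$). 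The generic finiteness of $\pi_{k}$ is exactly what guarantees that the jet lift is not trapped in the zero set of $\omega$, i.e.\ that $(R\circ f)^{*}\omega\not\equiv 0$, so that Theorem~\ref{thm2} can be contraposed. Your proposal omits this dichotomy entirely and therefore has no mechanism for ruling out the degenerate-jet scenario. (A secondary point: invoking Green--Griffiths--Demailly for arbitrary general-type $W$ is also much heavier and less self-contained than the torus-specific Ru--Sibony construction, which exploits the triviality of $J_{k}(T,\mathfrak{X})$.)
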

\subsection{Brownian motions}~

 Let  $(M,g)$ be a Riemannian manifold with  Laplace-Beltrami operator $\Delta_M$ associated to  $g.$  For $x\in M,$ we denote by $B_x(r)$ the geodesic ball centered at $x$ with radius $r,$ and denote by $S_x(r)$ the geodesic sphere centered at $x$ with radius $r.$
 Apply  Sard's theorem, $S_x(r)$ is a submanifold of $M$ for almost all $r>0.$
A Brownian motion $X_t$ in $M$
is a heat diffusion  process  generated by $\Delta_M/2$ with transition density function $p(t,x,y)$  being the minimal positive fundamental solution of the  heat equation
  $$\frac{\partial}{\partial t}u(t,x)-\frac{1}{2}\Delta_{M}u(t,x)=0.$$
We denote by $\mathbb P_x$ the law of $X_t$ started at $x\in M$
 and by $\mathbb E_x$ the corresponding expectation with respect to $\mathbb P_x.$

 \noindent\textbf{A. Co-area formula}

  Let $D$  be a bounded domain with   smooth boundary $\partial D$ in $M$.
Fix $x\in D,$  we use $d\pi^{\partial D}_x$ to denote the harmonic measure  on $\partial D$ with respect to $x.$
This measure is a probability measure.
 Set
$$\tau_D:=\inf\{t>0:X_t\not\in D\}$$
which is a stopping time.
Denote by $g_D(x,y)$  the Green function of $\Delta_M/2$ for  $D$ with Dirichlet boundary condition and a pole at $x$, namely
$$-\frac{1}{2}\Delta_{M}g_D(x,y)=\delta_x(y), \ y\in D; \ \ g_D(x,y)=0, \ y\in \partial D,$$
where $\delta_x$ is the Dirac function.
 For $\phi\in \mathscr{C}_{\flat}(D)$
 (space of bounded continuous functions on $D$), the \emph{co-area formula} \cite{bass} asserts  that
$$ \mathbb{E}_x\left[\int_0^{\tau_D}\phi(X_t)dt\right]=\int_{D}g_{D}(x,y)\phi(y)dV(y).
$$
From Proposition 2.8 in \cite{bass}, we also have the relation of harmonic measures and hitting times that
\begin{equation}\label{w121}
  \mathbb{E}_x\left[\psi(X_{\tau_{D}})\right]=\int_{\partial D}\psi(y)d\pi_x^{\partial D}(y)
\end{equation}
for any $\psi\in\mathscr{C}(\overline{D})$.
Thanks to  the expectation $``\mathbb E_x",$ the co-area formula and (\ref{w121}) still work in the case when $\phi$ or $\psi$ has a pluripolar set of singularities.

 \noindent\textbf{B. Dynkin formula}

Let $u\in\mathscr{C}_\flat^2(M)$ (space of bounded $\mathscr{C}^2$-class functions on $M$), we have the famous \emph{It\^o formula} (see \cite{at,13, NN,itoo})
$$u(X_t)-u(x)=B\left(\int_0^t\|\nabla_Mu\|^2(X_s)ds\right)+\frac{1}{2}\int_0^t\Delta_Mu(X_s)dt, \ \ \mathbb P_x-a.s.$$
where $B_t$ is the standard  Brownian motion in $\mathbb R$ and $\nabla_M$ is gradient operator on $M$.
Take expectation of both sides of the above formula, it  follows \emph{Dynkin formula} (see \cite{at,itoo})
$$ \mathbb E_x[u(X_T)]-u(x)=\frac{1}{2}\mathbb E_x\left[\int_0^T\Delta_Mu(X_t)dt\right]
$$
for a stopping time $T$ such that each term  makes sense.
Notice that  Dynkin formula still holds for  $u\in\mathscr{C}^2(M)$ if $T=\tau_D.$
In further, it also works when $u$ is of a pluripolar set of singularities, particularly, for a plurisubharmonic function $u.$ 

\section{First Main Theorem}

Let $(S,g)$ be an open Riemann surface with  K\"ahler form $\alpha$ associated to the Hermitian metric $g.$
 Fix $o\in S$ as a reference point, we
  denote by $g_r(o,x)$ the Green function of $\Delta_S/2$ for $D(r)$ with Dirichlet boundary condition and a pole at $o,$ and by $d\pi^r_o(x)$ the harmonic
measure on $\partial D(r)$ with respect to $o.$
Let $X_t$ be the Brownian
motion with generator $\Delta_S/2$ starting from $o\in S.$ Moreover, we set the stopping time $$\tau_r=\inf\big\{t>0: X_t\not\in D(r)\big\}.$$
\subsection{Nevanlinna's functions}~

Let $$f:S\rightarrow M$$  be a holomorphic curve into a compact complex  manifold $M.$ We  introduce  the generalized  Nevanlinna's functions over 
  Riemann surface $S.$
Let $L\rightarrow M$
be an ample  holomorphic line bundle   equipped with Hermitian metric $h.$   We define the \emph{characteristic function} of $f$ with respect to $L$ by
 \begin{eqnarray*}\label{}
   T_{f,L}(r)
   &=& \pi\int_{D(r)}g_r(o,x)f^*c_1(L,h) \\
   &=&-\frac{1}{4}\int_{D(r)}g_r(o,x)\Delta_S\log h\circ f(x)dV(x),
 \end{eqnarray*}
 where $dV(x)$ is the Riemannian volume measure of $S.$ It can be easily known that $T_{f,L}(r)$ is independent
 of the choices of  metrics on $L,$ up to a bounded term.  
 Since a holomorphic line bundle 
 can be represented  as the difference of two  ample holomorphic line bundles,  the definition of $T_{f,L}(r)$ can  extend to
 an arbitrary holomorphic line bundle.
 Apply co-area formula, we have
 $$T_{f,L}(r)=-\frac{1}{4}\mathbb E_o\left[\int_0^{\tau_r}\Delta_S\log h\circ f(X_t)dt\right].$$
For simplicity,  
  we  use  notation  $T_{f,D}(r)$ to stand for   $T_{f,L_D}(r)$ for a   divisor $D$ on $M.$ 
  Similarly, a divisor can also  be written  as the difference of two ample divisors.
  The Weil function of  $D$  is well defined  by
$$\lambda_D(x)=-\log\|s_D(x)\|$$
up to a bounded term, here $s_D$ is the canonical section associated to $D.$   We   define the \emph{proximity function} of $f$ with respect to $D$  by
 $$m_f(r,D)=\int_{\partial D(r)}\lambda_D\circ f(x)d\pi^r_o(x).$$
 A relation between harmonic measures and hitting times shows that
 $$m_f(r,D)=\mathbb E_o\big[\lambda_D\circ f(X_{\tau_r})\big].$$
Locally, we write $s_D=\tilde{s}_De,$ where $e$ is a local holomorphic frame of $(L_D, h).$
 The \emph{counting function}
 of $f$ with respect to $D$ is defined by
\begin{eqnarray*}
N_f(r,D)
&=& \pi \sum_{x\in f^*D\cap D(r)}g_r(o,x) \\
&=& \pi\int_{D(r)}g_r(o,x)dd^c\big{[}\log|\tilde{s}_D\circ f(x)|^2\big{]} \\
&=&\frac{1}{4}\int_{D(r)}g_r(o,x)\Delta_S\log|\tilde{s}_D\circ f(x)|^2dV(x)
\end{eqnarray*}
in the sense of distributions or currents.

 Our definition of Nevanlinna's functions in above is  natural. When $S=\mathbb C,$ the Green function is $(\log\frac{r}{|z|})/\pi$
 and the harmonic measure is $d\theta/2\pi.$ So, by integration by part,  we  observe that
they agree with the classical ones.
\subsection{First Main Theorem}~

With the previous preparatory work, we are ready to prove the First Main Theorem of a holomorphic curve $f:S\rightarrow M$
 such that $f(o)\not\in {\rm{Supp}}D,$ where $D$ is a divisor on $M.$
Apply Dynkin formula to $\lambda_D\circ f(x),$  it yields that 
 $$\mathbb E_o\big[\lambda_D\circ f(X_{\tau_r})\big]-\lambda_D\circ f(o)=\frac{1}{2}
 \mathbb E_o\left[\int_0^{\tau_r}\Delta_S\lambda_D\circ f(X_t)dt\right].$$
The first term on the left hand side of the above equality is equal to $m_f(r,D),$ and the term on
the right hand side equals
$$\frac{1}{2}\mathbb E_o\left[\int_0^{\tau_r}\Delta_S\lambda_D\circ f(X_t)dt\right]=
\frac{1}{2}\int_{D(r)}g_r(o,x)\Delta_S\log\frac{1}{\|s_D\circ f(x)\|}dV(x)$$
due to co-area formula. Since $\|s_D\|^2=h|\tilde{s}_D|^2,$ where $h$ is a Hermitian metric 
on $L_D,$ then we get
\begin{eqnarray*}
\frac{1}{2}\mathbb E_o\left[\int_0^{\tau_r}\Delta_S\lambda_D\circ f(X_t)dt\right] &=&
-\frac{1}{4}\int_{D(r)}g_r(o,x)\Delta_S\log h\circ f(x)dV(x) \\
&&-\frac{1}{4}\int_{D(r)}g_r(o,x)\Delta_S\log|\tilde{s}_D\circ f(x)|^2dV(x) \\
&=& T_{f, D}(r)-N_f(r,D).
\end{eqnarray*}

Therefore, we obtain the First Main Theorem:
\begin{theorem}\label{first}  
 Let $f:S\rightarrow M$ be a holomorphic
curve with $f(o)\not\in {\rm{Supp}}D.$
  Then
  $$T_{f, D}(r)=m_f(r,D)+N_f(r,D)+O(1).$$
\end{theorem}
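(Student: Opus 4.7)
The plan is to apply Dynkin's formula to the Weil-function pullback $u := \lambda_D \circ f = -\log\|s_D \circ f\|_h$ on the stochastic interval $[0,\tau_r]$, with $h$ the metric fixed in the definition of $T_{f,D}$. Since $f(o)\notin {\rm{Supp}}\,D$, the value $u(o)$ is finite; and although $u$ carries logarithmic poles along the pluripolar set $f^{-1}({\rm{Supp}}\,D)$, the extended Dynkin formula recorded in Section B of the preliminaries still applies with stopping time $T=\tau_r$.

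First I would read off the boundary side: combining Dynkin with the harmonic-measure identity (\ref{w121}) gives
\[
\mathbb E_o\bigl[u(X_{\tau_r})\bigr] - u(o) \;=\; m_f(r,D) - \lambda_D\circ f(o),
\]
so that $\lambda_D\circ f(o)$ becomes the $O(1)$ constant on the right-hand side of the theorem.

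Next I would decompose the Laplacian side. In a local holomorphic frame, $\|s_D\|^2 = h|\tilde s_D|^2$, hence
\[
-2\log\|s_D\circ f\| \;=\; -\log h\circ f - \log|\tilde s_D\circ f|^2,
\]
so $\Delta_S u$ splits into a smooth piece coming from $-\tfrac12\log h\circ f$ and a current piece coming from $-\tfrac12\log|\tilde s_D\circ f|^2$. The co-area formula converts the expectation of the time integral into a Green-weighted integral over $D(r)$. The smooth piece reproduces $T_{f,D}(r)$ by the very definition of the characteristic function as a Green-integral of the pulled-back Chern curvature; the current piece is handled by Poincar\'e--Lelong, $dd^c\log|\tilde s_D\circ f|^2=[f^*D]$, and evaluates to $-N_f(r,D)$. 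Reassembling these identifications rearranges to $T_{f,D}(r)=m_f(r,D)+N_f(r,D)+O(1)$.

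The main technical obstacle is justifying that Dynkin's formula, and then the co-area formula, actually apply to the singular integrand $u$. The key point is that Brownian motion in $S$ almost surely avoids the pluripolar set $f^{-1}({\rm{Supp}}\,D)$, so $\int_0^{\tau_r}\Delta_S u\circ f(X_t)\,dt$ is a.s.\ absolutely convergent once one interprets the Dirac masses along $f^{-1}(D)$ distributionally via Poincar\'e--Lelong and repackages them as $N_f(r,D)$. This is precisely the ``pluripolar singularities'' caveat that Section B of the preliminaries flags for Dynkin and co-area; once it is invoked, the remainder of the argument is the routine bookkeeping sketched above.
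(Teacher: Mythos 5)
Your proposal is correct and follows essentially the same route as the paper: apply Dynkin's formula to $\lambda_D\circ f$ over $[0,\tau_r]$, identify the boundary expectation with $m_f(r,D)$ via the harmonic-measure relation, convert the time integral to a Green-weighted integral over $D(r)$ by the co-area formula, and split $\Delta_S\lambda_D\circ f$ via $\|s_D\|^2=h|\tilde s_D|^2$ into the curvature piece ($T_{f,D}(r)$) and the Poincar\'e--Lelong current piece ($-N_f(r,D)$), with the pluripolar-singularity caveat from Section~B covering the applicability of both stochastic formulas. The only difference is cosmetic: the paper carries out the splitting after invoking co-area, while you split first and then apply co-area term by term.
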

\begin{cor}[Nevanlinna's inequality]\label{bzda} We have
$$N_f(r,D)\leq T_{f, D}(r)+O(1).$$
\end{cor}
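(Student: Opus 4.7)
The plan is to deduce Corollary \ref{bzda} as an immediate consequence of Theorem \ref{first} (First Main Theorem) by showing that the proximity function is bounded below.

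First I would apply Theorem \ref{first} to rearrange
$$N_f(r,D) = T_{f,D}(r) - m_f(r,D) + O(1).$$
Thus the whole task reduces to showing
$$m_f(r,D) \geq -C$$
for some constant $C$ independent of $r$.

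Next I would exploit the compactness of $M$. Since $s_D$ is a global holomorphic section of the line bundle $L_D$ and $M$ is compact, $\|s_D\|_h$ is continuous on $M$ and therefore bounded above by some constant $C_0 > 0$. By rescaling the Hermitian metric $h$ (which only alters $T_{f,D}(r)$ by a bounded term and does not affect the validity of Theorem \ref{first}), we may arrange $\|s_D\|_h \leq 1$ on $M$, so that $\lambda_D = -\log \|s_D\|_h \geq 0$. In particular $\lambda_D$ is bounded below on $M$.

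Since $d\pi_o^r$ is a probability measure on $\partial D(r)$ (equivalently, the law of $X_{\tau_r}$ under $\mathbb{P}_o$), integrating the pointwise lower bound $\lambda_D \circ f \geq -C$ against it yields
$$m_f(r,D) = \mathbb{E}_o\bigl[\lambda_D \circ f(X_{\tau_r})\bigr] \geq -C.$$
Combining this with the rearranged First Main Theorem gives
$$N_f(r,D) \leq T_{f,D}(r) + C + O(1) = T_{f,D}(r) + O(1),$$
which is the desired inequality. There is no real obstacle here beyond observing that the only place $f(o) \not\in \mathrm{Supp}\, D$ is used is to ensure the $O(1)$ term in Theorem \ref{first} is finite; the rest is purely a boundedness-of-proximity argument, identical in spirit to the classical derivation of Nevanlinna's inequality from the First Main Theorem.
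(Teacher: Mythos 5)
Your proposal is correct and follows exactly the standard route the paper intends (the corollary is stated without proof precisely because it is this one-liner): rearrange the First Main Theorem and observe that $m_f(r,D)\geq -C$ since $\lambda_D$ is bounded below on the compact manifold $M$ and $d\pi_o^r$ is a probability measure. Nothing to add.
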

\section{Two key lemmas}

Let $(S,g)$ be a simply-connected and complete open Riemann surface with  Gauss curvature  $K_S\leq0$ associated to $g.$  Since uniformization theorem,  there is  a nowhere-vanishing holomorphic  vector field $\mathfrak X$ over $S.$
 In fact, it is known  from \cite{GR},   
 a nowhere-vanishing holomorphic vector field  always exists for  an arbitrary open Riemann surface.
\subsection{Calculus Lemma}~

Let $\kappa$ be defined by (\ref{kappa}). As is noted before,
$\kappa$ is a non-positive, decreasing continuous function  on $[0,\infty).$
  Associate the ordinary differential equation
  \begin{equation}\label{G}
    G''(t)+\kappa (t)G(t)=0; \ \ \ G(0)=0, \ \ G'(0)=1.
  \end{equation}
 We compare (\ref{G})  with $y''(t)+\kappa(0)y(t)=0$ under the same  initial conditions,
 $G$ can be easily estimated  as
$$G(t)=t \ \ \text{for}  \ \kappa\equiv0; \ \ \ G(t)\geq t \ \ \text{for} \ \kappa\not\equiv0.$$
This implies that
\begin{equation}\label{vvvv}
  G(r)\geq r \ \ \text{for} \ r\geq0; \ \ \ \int_1^r\frac{dt}{G(t)}\leq\log r \ \ \text{for} \ r\geq1.
\end{equation}
On the other hand, we  rewrite (\ref{G}) as the form
$$\log'G(t)\cdot\log'G'(t)=-\kappa(t).$$
Since $G(t)\geq t$ is increasing,
then the decrease and non-positivity of $\kappa$ imply that for each fixed $t,$ $G$  must satisfy one of the following two inequalities
$$\log'G(t)\leq\sqrt{-\kappa(t)} \ \ \text{for} \ t>0; \ \ \ \log'G'(t)\leq\sqrt{-\kappa(t)} \ \ \text{for} \ t\geq0.$$
Since $G(t)\rightarrow0$ as $t\rightarrow0,$ by integration, $G$ is bounded from above by
\begin{equation}\label{v2}
  G(r)\leq r\exp\big(r\sqrt{-\kappa(r)}\big) \ \  \text{for} \ r\geq0.
\end{equation}

 The main result of this subsection is the following Calculus Lemma:
\begin{theorem}[Calculus Lemma]\label{cal}
 Let $k\geq0$ be a locally integrable  function on $S$ such that it is locally bounded at $o\in S.$
 Then for any $\delta>0,$ there exists a constant $C>0$ independent of $k,\delta,$ and a subset $E_\delta\subseteq(1,\infty)$ of finite Lebesgue measure such that
$$
\mathbb E_o\big{[}k(X_{\tau_r})\big{]}
\leq \frac{F(\hat{k},\kappa,\delta)e^{r\sqrt{-\kappa(r)}}\log r}{2\pi C}\mathbb E_o\left[\int_0^{\tau_{r}}k(X_{t})dt\right]
$$
  holds for $r>1$ outside $E_\delta,$  where  $\kappa$ is defined by $(\ref{kappa})$ and $F$ is defined by
$$
F\big{(}\hat{k},\kappa, \delta\big{)}
=\Big\{\log^+\hat{k}(r)\cdot\log^+\Big(re^{r\sqrt{-\kappa(r)}}\hat{k}(r)\big\{\log^{+}\hat{k}(r)\big\}^{1+\delta}\Big)\Big\}^{1+\delta} \ \ \ \
$$with
$$\hat k(r)=\frac{\log r}{C}\mathbb E_o\left[\int_0^{\tau_{r}}k(X_{t})dt\right].$$
Moreover, we have the estimate
$$\log F(\hat{k},\kappa,\delta)
\leq O\Big(\log^+\log \mathbb E_o\left[\int_0^{\tau_{r}}k(X_{t})dt\right]+\log^+r\sqrt{-\kappa(r)}+\log^+\log r\Big).
$$
\end{theorem}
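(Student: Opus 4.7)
The plan is to convert the stochastic inequality into a deterministic one via the co-area formula and the harmonic-measure identity $(\ref{w121})$ of Section 1.2, and then iterate the classical Borel growth lemma on two naturally associated monotone quantities. Set
\begin{equation*}
T(r):=\mathbb{E}_o\Big[\int_0^{\tau_r}k(X_t)\,dt\Big]=\int_{D(r)}g_r(o,x)\,k(x)\,dV(x),\qquad A(r):=\mathbb{E}_o[k(X_{\tau_r})]=\int_{\partial D(r)}k\,d\pi^r_o,
\end{equation*}
and let $n(r):=\int_{D(r)}k\,dV$ be the Riemannian volume count; the co-area formula on $(S,g)$ gives $n'(r)=\int_{\partial D(r)}k\,d\sigma_r$ for almost every $r$.

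The first substantive step is to establish, via comparison geometry on the simply-connected complete surface $(S,g)$ with $K_S\geq\kappa(\rho)$ on $\overline{D(\rho)}$, the two deterministic inequalities
\begin{equation*}
T(r)\geq\frac{1}{C_0}\int_0^r\frac{n(t)}{G(t)}\,dt=:\frac{M(r)}{C_0},\qquad A(r)\leq\frac{n'(r)}{2\pi G(r)},
\end{equation*}
with an absolute geometric constant $C_0>0$ independent of $k$ and $\delta$. The first follows by bounding $g_r(o,x)$ from below by the radial Green function $\frac{1}{\pi}\int_{\rho(x)}^{r}dt/G(t)$ of the rotationally symmetric model surface determined by $G$, then interchanging the order of integration. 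The second follows from the Poisson-kernel identity $d\pi^r_o=-\tfrac12\,\partial_n g_r(o,\cdot)\,d\sigma_r$, Laplace comparison for the normal derivative $\partial_n g_r$, and the sphere-length bound $|\partial D(r)|\leq 2\pi G(r)$, all consequences of the curvature hypothesis and the model ODE $(\ref{G})$.

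Armed with these, a double application of the Borel growth lemma closes the argument. Outside a set $E_1\subset(1,\infty)$ of finite Lebesgue measure, $M'(r)=n(r)/G(r)\leq M(r)\{\log^+ M(r)\}^{1+\delta}$, hence $n(r)\leq G(r) M(r)\{\log^+M(r)\}^{1+\delta}$; outside a further finite-measure set $E_2$, $n'(r)\leq n(r)\{\log^+ n(r)\}^{1+\delta}$. Chaining these, plugging into $A(r)\leq n'(r)/(2\pi G(r))$, and using $G(r)\leq r\,e^{r\sqrt{-\kappa(r)}}$ from $(\ref{v2})$ yields
\begin{equation*}
A(r)\leq\frac{M(r)}{2\pi}\Big\{\log^+ M(r)\cdot\log^+\bigl[r\,e^{r\sqrt{-\kappa(r)}}M(r)\{\log^+M(r)\}^{1+\delta}\bigr]\Big\}^{1+\delta}.
\end{equation*}
Setting $C:=C_0/\pi$ so that $\hat k(r)=(\log r/C)\,T(r)$ dominates $M(r)$ up to an absolute constant for $r>1$, and substituting $\hat k$ for $M$ in the logarithmic factors (the mismatch being absorbed into the exceptional set), the right-hand side becomes exactly $\frac{F(\hat k,\kappa,\delta)\,e^{r\sqrt{-\kappa(r)}}\log r}{2\pi C}T(r)$. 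The final estimate $\log F\leq O(\log^+\log T(r)+\log^+ r\sqrt{-\kappa(r)}+\log^+\log r)$ then follows by taking $\log^+$ of the definition of $F$ and applying $\log^+(xy)\leq\log^+x+\log^+y+\log 2$ together with $\log^+x^a=a\log^+x$.

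The principal obstacle is the comparison step: securing the two inequalities of the second paragraph with absolute constants on a surface whose curvature bound $\kappa(\rho)$ varies with the radius. The lower bound on $T(r)$ is essentially Fubini once one has the Green-function comparison; the upper bound on $A(r)$ is more delicate because $d\pi^r_o$ need not be uniform on $\partial D(r)$, and controlling the Poisson-kernel density via Laplace comparison with the rotationally symmetric model surface---in the spirit of Atsuji \cite{at,atsuji}---is the technical heart of the argument.
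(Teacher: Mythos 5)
Your overall strategy is the same as the paper's: pass to deterministic integrals via co-area, bound the Green function from below by the model Green function of $(\ref{G})$ (this is Lemma~\ref{zz}, Atsuji's comparison), and then apply the Borel-type growth lemma (Lemma~\ref{cal1}) twice to the nested monotone functions. In fact your $M(r)=\int_0^r n(t)/G(t)\,dt$ is, after an interchange of order of integration, identical to the paper's $\Lambda(r)$, and $M'(r)=n(r)/G(r)=\Lambda'(r)$, so the two-step Borel iteration you describe is literally the paper's.

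There is, however, a substantive error in the comparison step, and it is in exactly the direction-of-comparison you flagged as the ``technical heart.'' You claim $A(r)\le n'(r)/(2\pi G(r))$, deducing it from $|\partial D(r)|\le 2\pi G(r)$. But the hypothesis $K_S\ge\kappa$ (which is what $(\ref{kappa})$ encodes, since $\kappa$ is a \emph{minimum} of curvature) gives an \emph{upper} bound $|\partial D(r)|\le 2\pi G(r)$ on the sphere length; together with the rotationally symmetric picture $d\pi^r_o=d\sigma_r/|\partial D(r)|$ this would give $d\pi^r_o\ge d\sigma_r/(2\pi G(r))$, i.e.\ the \emph{lower} bound, not the upper bound you need. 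The correct upper bound, used in the paper and due to Debiard--Gaveau--Mazet, comes from the other side of the pinching $K_S\le 0$ and reads $d\pi^r_o\le d\sigma_r/(2\pi r)$. Using $G(r)$ in place of $r$ in the denominator is not justified, and concretely fails: take a rotationally symmetric surface with $K_S=-1$ on $\overline{D(1)}$ and $K_S=0$ outside; then for large $r$ one has $d\pi^r_o\approx d\sigma_r/(2\pi\cosh(1)\,r)$ while $G(r)=\sinh r$ grows exponentially, so $d\pi^r_o\gg d\sigma_r/(2\pi G(r))$.

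The consequence is that your final displayed bound
\begin{equation*}
A(r)\leq\frac{M(r)}{2\pi}\Big\{\log^+ M(r)\cdot\log^+\bigl[r\,e^{r\sqrt{-\kappa(r)}}M(r)\{\log^+M(r)\}^{1+\delta}\bigr]\Big\}^{1+\delta}
\end{equation*}
is \emph{missing the factor $e^{r\sqrt{-\kappa(r)}}$ outside the logarithmic bracket}: the $G(r)$ in the numerator (produced by the first Borel step, $n(r)\le G(r)M(r)\{\log^+M(r)\}^{1+\delta}$) is wrongly cancelled against a $G(r)$ in the denominator instead of against $r$, and it is precisely the surviving ratio $G(r)/r\le e^{r\sqrt{-\kappa(r)}}$ from $(\ref{v2})$ that produces the exponential factor in the theorem. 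Your claim that the resulting right-hand side ``becomes exactly $\frac{F(\hat k,\kappa,\delta)e^{r\sqrt{-\kappa(r)}}\log r}{2\pi C}T(r)$'' is therefore internally inconsistent with what you wrote. A minor further point: the asserted bound $T(r)\ge M(r)/C_0$ with an absolute $C_0$ is stronger than what Lemma~\ref{zz} yields (which produces a $\log r$ loss from $\int_1^r dt/G(t)\le\log r$); fortunately this does not matter, since $\hat k(r)=(\log r/C)T(r)$ already carries the $\log r$ and the weaker $T(r)\ge CM(r)/\log r$ suffices for the substitution $\hat k\gtrsim M$. Fixing the proof just requires replacing your upper bound on $A(r)$ by $A(r)\le n'(r)/(2\pi r)$ and carrying the surviving $G(r)/r$ into the final estimate.
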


To prove  theorem \ref{cal}, we need to prepare some lemmas.
\begin{lemma}[\cite{atsuji}]\label{zz} Let $\eta>0$ be a constant. Then there is  a constant $C>0$ such that for
$r>\eta$ and $x\in B_o(r)\setminus \overline{B_o(\eta)}$
  $$g_r(o,x)\int_{\eta}^r\frac{dt}{G(t)}\geq C\int_{r(x)}^r\frac{dt}{G(t)}$$
 holds,  where  $G$ is defined by {\rm{(\ref{G})}}.
\end{lemma}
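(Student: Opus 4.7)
The plan is to construct an explicit subharmonic radial comparison function on $D(r)\setminus\{o\}$ via the Laplacian-comparison theorem, and then to run the maximum principle on the annulus $D(r)\setminus\overline{D(\eta)}$ against $g_r(o,\cdot)$.

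First I would set $\rho(x):=\int_{r(x)}^{r}dt/G(t)$ for $x\in D(r)\setminus\{o\}$. This is the natural candidate: since $G(t)\sim t$ as $t\to 0^{+}$, $\rho$ exhibits the same logarithmic blow-up at $o$ as $g_r(o,\cdot)$, and it vanishes on $\partial D(r)$. Because $S$ is Cartan--Hadamard (simply connected with $K_S\le 0$), the distance function $r(x)$ is smooth throughout $S\setminus\{o\}$ with $\|\nabla r\|\equiv 1$, and the chain rule gives
\[
\Delta_S\rho(x)=-\frac{\Delta_S r(x)}{G(r(x))}+\frac{G'(r(x))}{G(r(x))^{2}}.
\]
Because (\ref{kappa}) furnishes the radial lower curvature bound $K_S(x)\ge\kappa(r(x))$, the two-dimensional Laplacian-comparison theorem applied with the variable comparison function $G$ from (\ref{G}) yields $\Delta_S r(x)\le G'(r(x))/G(r(x))$; plugging this in one finds $\Delta_S\rho\ge 0$, so $\rho$ is subharmonic on $D(r)\setminus\{o\}$.

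Setting $A:=\int_{\eta}^{r}dt/G(t)$, I would then consider
\[
h(x):=A\cdot g_r(o,x)-C\cdot\rho(x)
\]
on the annulus $D(r)\setminus\overline{D(\eta)}$, with the constant $C>0$ still to be chosen. This $h$ is superharmonic there (since $g_r(o,\cdot)$ is harmonic off $o$ and $\rho$ is subharmonic). It vanishes on $\partial D(r)$, while on $\partial D(\eta)$ we have $\rho\equiv A$, so $h=A\bigl(g_r(o,\cdot)-C\bigr)\ge 0$ as soon as $C\le\min_{\partial D(\eta)}g_r(o,\cdot)$. The minimum principle then forces $h\ge 0$ on the entire annulus, which is precisely the asserted inequality.

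What remains is to produce a $C>0$ that is \emph{uniform} in $r>\eta$. Domain-monotonicity of the Dirichlet Green function gives $g_r\ge g_{r_0}$ on $D(r_0)$ whenever $r\ge r_0$, so fixing any $r_0>\eta$ (say $r_0=2\eta$) makes $C_1:=\min_{\partial D(\eta)}g_{r_0}(o,\cdot)>0$ admissible for all $r\ge r_0$; the leftover compact window $\eta<r<r_0$ is handled by continuity in $r$, and we take the smaller constant. The main obstacle I anticipate is the Laplacian-comparison step, because the bound $K_S\ge\kappa(r(x))$ is radial but nonconstant and requires the variable-curvature (Sturm/Rauch) form of the comparison theorem whose Jacobi equation is exactly (\ref{G}); once that is granted, the maximum-principle and monotonicity pieces are routine.
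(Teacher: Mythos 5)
Your comparison construction is exactly the right idea, and it is the standard route: the function $\rho(x)=\int_{r(x)}^r G(t)^{-1}\,dt$ is (up to normalization) the Green function of the model surface with metric $ds^2+G(s)^2\,d\theta^2$, whose Gaussian curvature is $-G''/G=\kappa(s)$, and the maximum principle in the annulus is how such lower bounds for $g_r(o,\cdot)$ are obtained. One small point worth making explicit in the Laplacian comparison step: what the Riccati comparison actually needs is $K_S(\gamma(s))\ge\kappa(s)$ for every $s$ along the minimizing geodesic $\gamma$ from $o$ to $x$, not merely the endpoint bound $K_S(x)\ge\kappa(r(x))$; this holds because $\gamma(s)\in\partial D(s)\subseteq\overline{D(s)}$ and $\kappa(s)=\min_{\overline{D(s)}}K_S$ is a running minimum. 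With that, $\Delta_S r\le G'/G$ away from $o$ (no cut locus by Cartan--Hadamard), $\Delta_S\rho\ge 0$, and your superharmonic barrier argument goes through.

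The one genuine gap is in the uniformity step. You write that the ``leftover compact window $\eta<r<r_0$ is handled by continuity in $r$,'' but this cannot work: the interval $(\eta,r_0)$ is not compact at its left endpoint, and more to the point $\min_{\partial D(\eta)}g_r(o,\cdot)\to 0$ as $r\to\eta^+$, since $\partial D(\eta)$ approaches $\partial D(r)$ where $g_r$ vanishes. Already in the flat case $S=\CC$ one has $g_r(o,x)\int_\eta^r dt/t = \tfrac{1}{\pi}\log\tfrac{r}{|z|}\cdot\log\tfrac{r}{\eta}$ while $\int_{r(x)}^r dt/t=\log\tfrac{r}{|z|}$, so the best constant is $\tfrac{1}{\pi}\log\tfrac{r}{\eta}$, which tends to $0$; no single $C>0$ works for all $r>\eta$. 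So what your argument (correctly) proves is: for each fixed $r_0>\eta$ there is $C=C(\eta,r_0)>0$ such that the inequality holds for all $r\ge r_0$. This is in fact all the paper uses, since the Calculus Lemma is an asymptotic statement with an exceptional set of finite measure, and the window $(\eta,r_0)$ can be absorbed into it. You should either weaken the conclusion to $r\ge r_0$ or flag that the statement as written is slightly too strong; do not rely on continuity to close the gap.
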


\begin{lemma}[\cite{ru}]\label{cal1} Let $T$ be a strictly positive nondecreasing  function of $\mathscr{C}^1$-class on $(0,\infty).$ Let $\gamma>0$ be a number such that $T(\gamma)\geq e,$ and $\phi$ be a strictly positive nondecreasing function such that
$$c_\phi=\int_e^\infty\frac{1}{t\phi(t)}dt<\infty.$$
Then, the inequality
  $$T'(r)\leq T(r)\phi(T(r))$$
holds for all $r\geq\gamma$ outside a subset of Lebesgue measure not exceeding $c_\phi.$ In particular, take  $\phi(t)=\log^{1+\delta}t$ for a number  $\delta>0,$  we have
  $$T'(r)\leq T(r)\log^{1+\delta}T(r)$$
holds for all $r>0$ outside a subset $E_\delta\subseteq(0,\infty)$ of finite Lebesgue measure.
\end{lemma}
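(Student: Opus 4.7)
The plan is to use the standard Borel lemma argument via change of variables. Define the exceptional set
$$E := \bigl\{\,r \geq \gamma \ :\ T'(r) > T(r)\phi(T(r))\,\bigr\}.$$
On $E$ the ratio $T'(r)/\bigl(T(r)\phi(T(r))\bigr)$ exceeds $1$, so
$$|E| \;=\; \int_E dr \;\leq\; \int_E \frac{T'(r)}{T(r)\phi(T(r))}\,dr \;\leq\; \int_\gamma^\infty \frac{T'(r)}{T(r)\phi(T(r))}\,dr.$$
This reduces matters to bounding the last integral.

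Next I would perform the substitution $u = T(r)$. Since $T$ is $\mathscr{C}^1$ and nondecreasing with $T'\geq 0$, the integrand is supported on $\{r : T'(r)>0\}$, and on this set $T$ is locally strictly increasing, so a standard change of variables (or the area formula for absolutely continuous monotone functions) gives
$$\int_\gamma^\infty \frac{T'(r)}{T(r)\phi(T(r))}\,dr \;\leq\; \int_{T(\gamma)}^{\,\sup T} \frac{du}{u\phi(u)} \;\leq\; \int_e^\infty \frac{du}{u\phi(u)} \;=\; c_\phi,$$
where the last inequality uses $T(\gamma)\geq e$ and the positivity of the integrand. This proves the general assertion: $|E|\leq c_\phi$.

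For the specialization $\phi(t)=\log^{1+\delta}t$, I would substitute $v=\log t$ to evaluate
$$c_\phi \;=\; \int_e^\infty \frac{dt}{t\,\log^{1+\delta}t} \;=\; \int_1^\infty \frac{dv}{v^{1+\delta}} \;=\; \frac{1}{\delta} \;<\; \infty,$$
so the inequality $T'(r)\leq T(r)\log^{1+\delta}T(r)$ holds for $r\geq\gamma$ off a set of measure at most $1/\delta$. To promote the statement to all $r>0$ as claimed, I would adjoin $(0,\gamma)$ to the exceptional set (this interval has finite measure $\gamma$, and on it $T(r)$ may dip below $e$ so the nominal inequality is not meaningful). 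The resulting exceptional set $E_\delta\subseteq(0,\infty)$ still has finite Lebesgue measure.

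This argument is almost entirely routine; the only point requiring minor care is the change of variables, because $T$ is only assumed nondecreasing (not strictly increasing), so $u=T(r)$ may fail to be injective on plateaus. The obstacle is resolved by observing that $T'=0$ on any plateau, so such intervals contribute nothing to either the left- or the right-hand side of the substitution identity; one can phrase this cleanly by applying the co-area / area formula for $\mathscr{C}^1$ monotone functions, or by decomposing $[\gamma,\infty)$ into the open set $\{T'>0\}$ (on which $T$ is locally a $\mathscr{C}^1$-diffeomorphism onto its image) and its complement.
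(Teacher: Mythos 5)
The paper cites this lemma from Ru's book \cite{ru} without reproducing a proof, so there is no in-paper argument to compare against; your proposal is the standard Borel-type argument and matches the proof one finds in that reference. The key estimate $|E| \leq \int_E T'/(T\phi(T))\,dr$ followed by the substitution $u = T(r)$ (equivalently, integrating $\frac{d}{dr}\Psi(T(r))$ where $\Psi(u)=\int_e^u \frac{dv}{v\phi(v)}$) is exactly the usual route, and your handling of the non-injectivity of $T$ on plateaus via $T'=0$ there is the right observation, as is adjoining $(0,\gamma)$ to the exceptional set for the ``all $r>0$'' form of the specialized statement. One small remark worth noting: the general statement only assumes $\phi$ is monotone, not continuous, so $\Psi$ is merely absolutely continuous rather than $\mathscr{C}^1$; the chain-rule/change-of-variables step still goes through by Lebesgue differentiation, but it is cleanest to say so explicitly (and in the application $\phi(t)=\log^{1+\delta}t$ is smooth, so nothing is at stake there).
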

We are ready to prove Theorem $\ref{cal}:$ 
\begin{proof}
We follow the arguments of Atsuji \cite{atsuji}. The simple-connectedness and non-positivity of  Gauss curvature of $S$ imply the following relation  (see \cite{Deb})
$$d\pi^r_{o}(x)\leq\frac{1}{2\pi r}d\sigma_r(x),$$
where  $d\sigma_r(x)$ is the induced volume measure on $\partial D(r).$
By  Lemma \ref{zz} and (\ref{vvvv}), we have
\begin{eqnarray*}
   \mathbb E_o\left[\int_0^{\tau_{r}}k(X_{t})dt\right] &=&
   \int_{D(r)}g_r(o,x)k(x)dV(x) \\ &=&\int_0^rdt\int_{\partial D(t)}g_r(o,x)k(x)d\sigma_t(x)  \\
&\geq& C\int_0^r\frac{\int_t^rG^{-1}(s)ds}{\int_1^rG^{-1}(s)ds}dt\int_{\partial D(t)}k(x)d\sigma_t(x) \\
&\geq& \frac{C}{\log r}\int_0^rdt\int_t^r\frac{ds}{G(s)}\int_{\partial D(t)}k(x)d\sigma_t(x), \\
 \mathbb E_o\big[k(X_{\tau_r})\big]&=&\int_{\partial D(r)}k(x)d\pi_o^r(x)\leq\frac{1}{2\pi r}\int_{\partial D(r)}k(x)d\sigma_r(x).
\end{eqnarray*}
Hence,
  \begin{eqnarray}\label{fr}
\mathbb E_o\left[\int_0^{\tau_{r}}k(X_{t})dt\right]&\geq&\frac{C}{\log r}\int_0^rdt\int_t^r\frac{ds}{G(s)}\int_{\partial D(o,t)}k(x)d\sigma_t(x), \nonumber \\
  \mathbb E_o\big[k(X_{\tau_r})\big]&\leq&\frac{1}{2\pi r}\int_{\partial D(r)}k(x)d\sigma_r(x).
\end{eqnarray}
 Set
$$\Lambda(r)=\int_0^rdt\int_t^r\frac{ds}{G(s)}\int_{\partial D(t)}k(x)d\sigma_t(x).$$
We conclude that
\begin{equation*}
 \Lambda(r)\leq\frac{\log r}{C}\mathbb E_o\left[\int_0^{\tau_{r}}k(X_{t})dt\right]=\hat{k}(r).
\end{equation*}
Since
$$\Lambda'(r)=\frac{1}{G(r)}\int_0^rdt\int_{\partial D(t)}k(x)d\sigma_t(x),$$
then it yields from (\ref{fr}) that
\begin{equation*}
  \mathbb E_o\big{[}k(X_{\tau_r})\big{]}\leq\frac{1}{2\pi r}\frac{d}{dr}\left(\Lambda'(r)G(r)\right).
\end{equation*}
By Lemma \ref{cal1} twice and (\ref{v2}), then for any $\delta>0$
\begin{eqnarray*}
   && \frac{d}{dr}\left(\Lambda'(r)G(r)\right) \\
&\leq& G(r)\Big\{\log^+\Lambda(r)\cdot\log^+\left(G(r)\Lambda(r)\big\{\log^+\Lambda(r)\big\}^{1+\delta}\right)\Big\}^{1+\delta}\Lambda(r)  \\
&\leq& re^{r\sqrt{-\kappa(r)}}\Big\{\log^+\hat k(r)\cdot\log^+\Big(re^{r\sqrt{-\kappa(r)}}\hat k(r)\big\{\log^+\hat k(r)\big\}^{1+\delta}\Big)\Big\}^{1+\delta} \hat k(r) \\
&=& \frac{F\big{(}\hat k,\kappa,\delta\big{)}re^{r\sqrt{-\kappa(r)}}\log r}{C}\mathbb E_o\left[\int_0^{\tau_{r}}k(X_{t})dt\right] \ \ \
\end{eqnarray*}
holds outside a set $E_\delta\subseteq(1,\infty)$ of finite Lebesgue measure.
Thus,
\begin{eqnarray*}
\mathbb E_o\big{[}k(X_{\tau_r})\big{]}
&\leq&\frac{F\big{(}\hat k,\kappa,\delta\big{)}e^{r\sqrt{-\kappa(r)}}\log r}{2\pi C}\mathbb E_o\left[\int_0^{\tau_{r}}k(X_{t})dt\right].
\end{eqnarray*}
Hence, we get the desired  inequality.
Indeed, for $r>1$  we compute that
$$
\log F(\hat k,\kappa,\delta)
\leq O\Big(\log^+\log^+\hat k(r)+\log^+r\sqrt{-\kappa(r)}+\log^+\log r\Big)$$
with
\begin{eqnarray*}
  \log^+\hat k(r)&\leq& \log \mathbb E_o\left[\int_0^{\tau_{r}}k(X_{t})dt\right]+\log^+\log r+O(1).
  \end{eqnarray*}
 Therefore, we have arrived at the required estimate.
 \end{proof}
\subsection{Logarithmic Derivative Lemma}~

Let $\psi$ be a meromorphic function on $(S,g).$
The norm of the gradient of $\psi$ is defined by
$$\|\nabla_S\psi\|^2=\frac{1}{g}\left|\frac{\partial\psi}{\partial z}\right|^2$$
in a local holomorphic coordinate $z.$ Locally, we can write $\psi=\psi_1/\psi_0,$ where $\psi_0,\psi_1$ are local holomorphic functions without common zeros. Regard $\psi$  as a holomorphic mapping into $\mathbb P^1(\mathbb C)$  by
$x\mapsto[\psi_0(x):\psi_1(x)].$
We define
$$
T_\psi(r)=\frac{1}{4}\int_{D(r)}g_r(o,x)\Delta_S\log\big{(}|\psi_0(x)|^2+|\psi_1(x)|^2\big{)}dV(x)$$
and
$$T(r,\psi):=m(r,\psi)+N(r,\psi),
$$
where
\begin{eqnarray*}
m(r,\psi)&=&\int_{\partial D(r)}\log^+|\psi(x)|d\pi^r_o(x), \\
N(r,\psi)&=&\pi \sum_{x\in \psi^{-1}(\infty)\cap D(r)}g_r(o,x).
\end{eqnarray*}
Let
  $i:\mathbb C\hookrightarrow\mathbb P^1(\mathbb C)$ be an inclusion defined by
 $z\mapsto[1:z].$  Via the pull-back by $i,$ we have a (1,1)-form $i^*\omega_{FS}=dd^c\log(1+|\zeta|^2)$ on $\mathbb C,$
 where $\zeta:=w_1/w_0$ and $[w_0:w_1]$ is
the homogeneous coordinate system of $\mathbb P^1(\mathbb C).$ The characteristic function of $\psi$ with respect to $i^*\omega_{FS}$ is defined by
$$\hat{T}_\psi(r) = \frac{1}{4}\int_{D(r)}g_r(o,x)\Delta_S\log(1+|\psi(x)|^2)dV(x).$$
Clearly, $\hat{T}_\psi(r)\leq T_\psi(r).$
We adopt the spherical distance $\|\cdot,\cdot\|$ on  $\mathbb P^1(\mathbb C),$ the proximity function of $\psi$  with respect to
$a\in \mathbb P^1(\mathbb C)$
is defined by
$$\hat{m}_\psi(r,a)=\int_{\partial D(r)}\log\frac{1}{\|\psi(x),a\|}d\pi_o^r(x).$$
Again,  set
$$\hat{N}_\psi(r,a)=\pi \sum_{x\in \psi^{-1}(a)\cap D(r)}g_r(o,x).$$
By the similar arguments as in the proof of Theorem \ref{first}, we  have
$$\hat{T}_\psi(r)=\hat{m}_\psi(r,a)+\hat{N}_\psi(r,a)+O(1).$$
 Note  that $m(r,\psi)=\hat{m}_\psi(r,\infty)+O(1),$ which yields that
 $$
   T(r,\psi)=\hat{T}_\psi(r)+O(1), \ \ \ T\Big(r,\frac{1}{\psi-a}\Big)= T(r,\psi)+O(1).
$$
Hence, we arrive at 
\begin{equation}\label{relation}
  T(r,\psi)+O(1)=\hat{T}_\psi(r)\leq T_\psi(r)+O(1).
\end{equation}

In order to prove the main results stated in Introduction, the Logarithmic Derivative Lemma (LDL) below is  important.
\begin{theorem}[LDL]\label{ldl2} Let $\psi$ be a nonconstant meromorphic function on $S.$ Let $\mathfrak{X}$ be a nowhere-vanishing holomorphic
 vector field over $S.$ Then
\begin{eqnarray*}
m\Big(r,\frac{\mathfrak{X}^k(\psi)}{\psi}\Big)  &\leq& \frac{3k}{2}\log T(r,\psi)+O\Big(\log^+\log T(r,\psi)-\kappa(r)r^2
+\log^+\log r\Big) \big{\|}
\end{eqnarray*}
with $\mathfrak X^j=\mathfrak X\circ\mathfrak X^{j-1}$ and $\mathfrak X^0=id,$   where 
$\kappa$ is defined by $(\ref{kappa}).$ In particular, if $S$ is the Poincar\'e disc,  then 
\begin{eqnarray*}
m\Big(r,\frac{\mathfrak{X}^k(\psi)}{\psi}\Big)  &\leq& \frac{3k}{2}\log T(r,\psi)+O\Big(\log^+\log T(r,\psi)+r\Big) \big{\|}.
\end{eqnarray*}
\end{theorem}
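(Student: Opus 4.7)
The plan is to handle the general $k$ by induction, reducing to the base case $k=1$, which is then established via Jensen's concavity inequality together with the Calculus Lemma (Theorem \ref{cal}). The key ingredients I will use are: (i) the identity $\Delta_S\log(1+|\psi|^2) = 4\|\nabla_S\psi\|^2/(1+|\psi|^2)^2$, which links $T(r,\psi)$ to a computable integral; (ii) the First Main Theorem (Theorem \ref{first}) to bound $m(r,\psi)$ and $m(r,1/\psi)$ each by $T(r,\psi)+O(1)$; and (iii) the elementary estimate $r\sqrt{-\kappa(r)} = \sqrt{-\kappa(r)\,r^2} \leq -\kappa(r)r^2 + 1$, which converts the $e^{r\sqrt{-\kappa(r)}}$-factor arising from the Calculus Lemma into the $-\kappa(r)r^2$ of the final statement.

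For the inductive step I would write
\[
\frac{\mathfrak X^k\psi}{\psi} = \frac{\mathfrak X(\mathfrak X^{k-1}\psi)}{\mathfrak X^{k-1}\psi}\cdot\frac{\mathfrak X^{k-1}\psi}{\psi}
\]
and apply $\log^+|ab|\leq \log^+|a|+\log^+|b|$ to split $m(r,\mathfrak X^k\psi/\psi)$ accordingly: the first piece is the base case applied to the meromorphic function $\mathfrak X^{k-1}\psi$, the second is the inductive hypothesis. To keep $\log T(r,\mathfrak X^{j}\psi)=\log T(r,\psi)+O(1)$ at every step, I use Theorem \ref{first} together with $N(r,\mathfrak X\psi/\psi)\leq N(r,\psi)+N(r,1/\psi)+O(1)\leq 2T(r,\psi)+O(1)$, which gives $T(r,\mathfrak X\psi)\leq 3T(r,\psi)+O(\log T(r,\psi))$ once the base case is known; iterating shows that $T(r,\mathfrak X^j\psi)$ remains polynomially bounded in $T(r,\psi)$.

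For the base case, set $\phi:=\mathfrak X\psi/\psi$. The inequality $\log^+ x\leq \tfrac12\log(1+x^2)$ combined with Jensen's inequality (concavity of $\log$ against $d\pi^r_o$) yields
\[
m(r,\phi)\leq \tfrac12\log\bigl(1+\mathbb E_o[|\phi|^2(X_{\tau_r})]\bigr).
\]
I would then apply Theorem \ref{cal} to $k(x)=|\phi|^2(x)$, reducing the task to bounding $\int_{D(r)} g_r|\phi|^2\,dV$. Since $|\phi|^2=\|\mathfrak X\|^2\,\Theta\cdot (1+|\psi|^2)^2/|\psi|^2$ with $\Theta:=\|\nabla_S\psi\|^2/(1+|\psi|^2)^2$ satisfying $\int g_r\Theta\,dV=T(r,\psi)+O(1)$, and $(1+|\psi|^2)^2/|\psi|^2=|\psi|^2+2+|\psi|^{-2}$, the First Main Theorem applied to $\psi$ and to $1/\psi$ furnishes the polynomial-in-$T$ control of the integral. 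Taking logarithms and inserting the bound $\log F\leq O(\log^+\log T(r,\psi)+\log^+(r\sqrt{-\kappa(r)})+\log^+\log r)$ from Theorem \ref{cal} yields the base estimate
\[
m(r,\phi)\leq \tfrac32\log T(r,\psi)+O\bigl(\log^+\log T(r,\psi)-\kappa(r)r^2+\log^+\log r\bigr)\,\|,
\]
which the induction then promotes to the $3k/2$-coefficient. The Poincaré disc specialisation is automatic since $\kappa\equiv -1$ gives $r\sqrt{-\kappa(r)}=r$ exactly.

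The main obstacle is the singular factor $|\psi|^{-2}$ in $|\phi|^2$, which prevents a direct application of the Calculus Lemma to $\Theta$ alone and forces the splitting of the integrand according to $|\psi|\lessgtr 1$, handled by the First Main Theorem on $\psi$ and on $1/\psi$ respectively. This interplay between the First Main Theorem (producing the $T$-factors) and the Calculus Lemma (producing the $\log$-factor and the geometric error) is precisely what accounts for the coefficient $\tfrac32$ at the base case rather than the classical $1$ of Nevanlinna's $\mathbb C$-case LDL. A routine secondary point is the bookkeeping of exceptional sets: each invocation of Lemma \ref{cal1} via Theorem \ref{cal} introduces an $r$-set of finite Lebesgue measure, but the finite union over all iterations remains of finite measure, so the final estimate still holds in the $\|$-sense.
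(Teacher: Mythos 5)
Your proposal has a genuine gap at the heart of the base case. You want to apply the Calculus Lemma (Theorem~\ref{cal}) with the test function $k(x)=|\phi|^{2}(x)$, where $\phi=\mathfrak X(\psi)/\psi$, and then bound $\int_{D(r)}g_r(o,x)|\phi|^{2}\,dV$. But $k=|\phi|^{2}$ is in general \emph{not} locally integrable on $S$, so the hypothesis of Theorem~\ref{cal} fails and the integral you are asked to control is $+\infty$. Concretely, near a zero $z_{0}$ of order $m$ of $\psi$ one has $\phi\sim a m/(z-z_{0})$, hence $|\phi|^{2}\sim |am|^{2}/|z-z_{0}|^{2}$, which is not integrable with respect to two–dimensional Lebesgue (and hence Riemannian) measure; the same divergence occurs at poles. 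The splitting you propose, according to $|\psi|\lessgtr 1$ together with the First Main Theorem for $\psi$ and for $1/\psi$, does not remove this: the First Main Theorem controls $N(r,\psi)$ and $N(r,1/\psi)$, i.e.\ weighted \emph{counts} of zeros and poles, but has nothing to say about the local non-integrability of $1/|z-z_{0}|^{2}$. In your notation, $\Theta|\psi|^{-2}$ is non-integrable at every zero and $\Theta|\psi|^{2}$ at every pole, so both pieces of $(1+|\psi|^{2})^{2}/|\psi|^{2}=|\psi|^{2}+2+|\psi|^{-2}$ are problematic, not just one.

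This is precisely the obstruction that the paper's singular metric $\Phi=\dfrac{1}{|\zeta|^{2}(1+\log^{2}|\zeta|)}\dfrac{\sqrt{-1}}{4\pi^{2}}\,d\zeta\wedge d\bar\zeta$ on $\mathbb P^{1}(\mathbb C)$ is designed to overcome. Two things happen at once: first, the extra factor $(1+\log^{2}|\psi|)^{-1}$ tames the local singularity, since near a zero or pole the pullback density behaves like $1/(|z-z_{0}|^{2}\log^{2}|z-z_{0}|)$, which \emph{is} locally integrable; second, $\Phi$ has total mass $1$ on $\mathbb P^{1}(\mathbb C)$, which makes the Fubini argument work and gives the global bound $T_{\psi}(r,\Phi)\leq T(r,\psi)+O(1)$ of equation~(\ref{get}). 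Your replacement, built from the Fubini--Study form $\Theta$ multiplied by $(1+|\psi|^{2})^{2}/|\psi|^{2}$, corresponds to the form $\dfrac{1}{|\zeta|^{2}}\dfrac{\sqrt{-1}}{2\pi}\,d\zeta\wedge d\bar\zeta$ on $\mathbb P^{1}(\mathbb C)$, which has infinite mass; the Fubini trick then produces no bound in terms of $T(r,\psi)$. Consequently your mechanism for obtaining the coefficient $\tfrac32\log T(r,\psi)$ is not established, whereas in the paper it comes from the explicit split $A+B+C$ in the proof of Theorem~\ref{ldl1}: $A$ (Proposition~\ref{999a}, via $\Phi$ and the Calculus Lemma) contributes $\tfrac12\log T$, $B$ (Jensen applied to $\log(1+\log^{2}|\psi|)$) contributes $\log T$, and $C$ (the subharmonicity of $\log\|\mathfrak X\|$ plus Proposition~\ref{yyyy}) contributes the $-\kappa(r)r^{2}$ error. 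Your telescoping step for general $k$ and the bookkeeping of exceptional sets are both fine and match the paper (Proposition~\ref{hello1}), but the whole argument rests on the base case, which as written does not go through.
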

\textbf{Remark.}  LDL  is still valid for a general open Riemann surface by lifting $S$ to the  universal covering, see  arguments in Section 4.2.

 On $\mathbb P^1(\mathbb C),$ we take a singular metric
$$\Phi=\frac{1}{|\zeta|^2(1+\log^2|\zeta|)}\frac{\sqrt{-1}}{4\pi^2}d\zeta\wedge d\overline \zeta.$$
A direct computation gives that
\begin{equation*}\label{}
\int_{\mathbb P^1(\mathbb C)}\Phi=1, \ \ \ 2\pi\psi^*\Phi=\frac{\|\nabla_S\psi\|^2}{|\psi|^2(1+\log^2|\psi|)}\alpha.
\end{equation*}
Set
\begin{equation*}\label{ffww}
  T_\psi(r,\Phi)=\frac{1}{2\pi}\int_{D(r)}g_r(o,x)\frac{\|\nabla_S\psi\|^2}{|\psi|^2(1+\log^2|\psi|)}(x)dV(x).
\end{equation*}
 Using Fubini's theorem,
\begin{eqnarray*}
T_\psi(r,\Phi)
&=&\int_{D(r)}g_r(o,x)\frac{\psi^*\Phi}{\alpha}dV(x)  \\
&=&\pi\int_{\zeta\in\mathbb P^1(\mathbb C)}\Phi\sum_{x\in \psi^{-1}(\zeta)\cap D(r)}g_r(o,x) \\
&=&\int_{\zeta\in\mathbb P^1(\mathbb C)}N\big(r,1/(\psi-\zeta)\big)\Phi
\leq T(r,\psi)+O(1).
\end{eqnarray*}
Then we get
\begin{equation}\label{get}
  T_\psi(r,\Phi)\leq T(r,\psi)+O(1).
\end{equation}
\begin{proposition}\label{999a} Assume that $\psi(x)\not\equiv0.$ Then 
\begin{eqnarray*}
  && \frac{1}{2}\mathbb E_o\left[\log^+\frac{\|\nabla_S\psi\|^2}{|\psi|^2(1+\log^2|\psi|)}(X_{\tau_r})\right] \\
  &\leq&\frac{1}{2}\log T(r,\psi)+O\big{(}\log^+\log T(r,\psi)+r\sqrt{-\kappa(r)}+\log^+\log r\big{)}  \big{\|},
\end{eqnarray*}
 where $\kappa$ is defined by $(\ref{kappa}).$
\end{proposition}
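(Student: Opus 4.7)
The plan is to apply the Calculus Lemma (Theorem \ref{cal}) to the function
$$k(x) := \frac{\|\nabla_S\psi\|^2}{|\psi|^2(1+\log^2|\psi|)}(x),$$
which is precisely (up to the factor $2\pi$) the density appearing in $T_\psi(r,\Phi)$, and then to push the $\log^+$ through the expectation by Jensen's inequality. Since $\log^+t\le\log(1+t)$ and $t\mapsto\log(1+t)$ is concave on $[0,\infty)$, applying Jensen's inequality against the probability measure $d\pi_o^r$ yields
$$\mathbb E_o\bigl[\log^+k(X_{\tau_r})\bigr]\;\le\;\log\bigl(1+\mathbb E_o[k(X_{\tau_r})]\bigr).$$

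Next I would combine Theorem \ref{cal} with the co-area identity
$$\mathbb E_o\left[\int_0^{\tau_r}k(X_t)dt\right] \;=\; 2\pi\,T_\psi(r,\Phi)$$
and the bound $T_\psi(r,\Phi)\le T(r,\psi)+O(1)$ from (\ref{get}), giving
$$\mathbb E_o\bigl[k(X_{\tau_r})\bigr]\;\le\;\frac{F(\hat k,\kappa,\delta)\,e^{r\sqrt{-\kappa(r)}}\log r}{C}\bigl(T(r,\psi)+O(1)\bigr)$$
outside an exceptional set of finite Lebesgue measure. Taking logarithms and substituting the error estimate
$$\log F(\hat k,\kappa,\delta)\;=\;O\bigl(\log^+\log T(r,\psi)+\log^+(r\sqrt{-\kappa(r)})+\log^+\log r\bigr)$$
provided by Theorem \ref{cal} (after noting $\hat k(r)\le O(\log r\cdot T(r,\psi))$, so $\log^+\log\hat k(r)$ is controlled by $\log^+\log T(r,\psi)+\log^+\log r$), together with the crude bound $\log^+(r\sqrt{-\kappa(r)})\le r\sqrt{-\kappa(r)}+O(1)$, yields
$$\log\bigl(1+\mathbb E_o[k(X_{\tau_r})]\bigr)\;\le\;\log T(r,\psi)+O\bigl(\log^+\log T(r,\psi)+r\sqrt{-\kappa(r)}+\log^+\log r\bigr).$$
Dividing through by $2$ produces the desired inequality.

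The only point requiring care is verifying the hypotheses of the Calculus Lemma for this particular $k$: namely that $k$ is locally integrable on $S$ and locally bounded near the reference point $o$. A local computation near a zero or pole of $\psi$ shows $k\sim |z|^{-2}/(1+\log^2|z|)$ in a holomorphic coordinate chart, which is integrable in real dimension two; local boundedness at $o$ holds provided $\psi(o)\notin\{0,\infty\}$, which can be arranged by the choice of base point (and costs at most an $O(1)$ term elsewhere). Granted those verifications, the remainder of the argument is a mechanical chain of estimates, with the key conceptual ingredients being the identification of the integrand of $T_\psi(r,\Phi)$ with $k$ and the Jensen step that converts $\mathbb E_o[\log^+k]$ into $\log\mathbb E_o[k]$.
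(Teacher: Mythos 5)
Your proposal follows the same route as the paper: apply Jensen's inequality to move the $\log^+$ outside the expectation, invoke the Calculus Lemma together with the co-area identity $\mathbb E_o[\int_0^{\tau_r}k(X_t)dt]=2\pi T_\psi(r,\Phi)$, bound $T_\psi(r,\Phi)$ by $T(r,\psi)$ via (\ref{get}), and control $\log F(\hat k,\kappa,\delta)$ by observing $\hat k(r)\le\frac{2\pi\log r}{C}T(r,\psi)$. The only addition beyond the paper's argument is your explicit check that $k$ satisfies the local integrability and local boundedness hypotheses of the Calculus Lemma, which is a reasonable piece of diligence the paper leaves implicit.
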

\begin{proof} By Jensen's inequality
\begin{eqnarray*}
   \mathbb E_o\left[\log^+\frac{\|\nabla_S\psi\|^2}{|\psi|^2(1+\log^2|\psi|)}(X_{\tau_r})\right]
   &\leq&  \mathbb E_o\left[\log\Big{(}1+\frac{\|\nabla_S\psi\|^2}{|\psi|^2(1+\log^2|\psi|)}(X_{\tau_r})\Big{)}\right] \nonumber \\
    &\leq& \log^+\mathbb E_o\left[\frac{\|\nabla_S\psi\|^2}{|\psi|^2(1+\log^2|\psi|)}(X_{\tau_r})\right]+O(1). \nonumber
\end{eqnarray*}
Combine Lemma \ref{cal} and co-area formula with  (\ref{get})
\begin{eqnarray*}
   && \log^+\mathbb E_o\left[\frac{\|\nabla_S\psi\|^2}{|\psi|^2(1+\log^2|\psi|)}(X_{\tau_r})\right]  \\
   &\leq& \log^+\mathbb E_o\left[\int_0^{\tau_r}\frac{\|\nabla_S\psi\|^2}{|\psi|^2(1+\log^2|\psi|)}(X_{t})dt\right]
   +\log \frac{F(\hat{k},\kappa,\delta)e^{r\sqrt{-\kappa(r)}}\log r}{2\pi C}
    \\
   &\leq& \log T_\psi(r,\Phi)+\log F(\hat k,\kappa,\delta)+r\sqrt{-\kappa(r)}+\log^+\log r+O(1) \\
    &\leq& \log T(r,\psi)+O\Big(\log^+\log^+\hat k(r)+r\sqrt{-\kappa(r)}+\log^+\log r\Big),
\end{eqnarray*}
where
$$\hat k(r)=\frac{\log r}{C}\mathbb E_o\left[\int_0^{\tau_{r}}\frac{\|\nabla_S\psi\|^2}{|\psi|^2(1+\log^2|\psi|)}(X_{t})dt\right].$$
Indeed,  note that
$$
\hat k(r)=\frac{2\pi \log r}{C}T_\psi(r,\Phi)\leq \frac{2\pi\log r}{C} T(r,\psi).
$$
Hence, we  have the desired inequality.
\end{proof}
We first give LDL for the first-order derivative:
\begin{theorem}[LDL]\label{ldl1} Let $\psi$ be a nonconstant meromorphic function on $S.$ Let $\mathfrak{X}$ be a nowhere-vanishing holomorphic vector field over $S.$ Then
\begin{eqnarray*}
m\Big(r,\frac{\mathfrak{X}(\psi)}{\psi}\Big)&\leq&\frac{3}{2}\log T(r,\psi)+O\Big{(}\log^+\log T(r,\psi)-\kappa(r)r^2+\log^+\log r\Big{)}  \big{\|},
\end{eqnarray*}
where $\kappa$ is defined by $(\ref{kappa}).$ In particular, if $S$ is the Poincar\'e disc,  then
\begin{eqnarray*}
m\Big(r,\frac{\mathfrak{X}(\psi)}{\psi}\Big)&\leq&\frac{3}{2}\log T(r,\psi)+O\Big{(}\log^+\log T(r,\psi)+r\Big{)}  \big{\|}.
\end{eqnarray*}
\end{theorem}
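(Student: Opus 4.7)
The plan is to deduce Theorem \ref{ldl1} from Proposition \ref{999a} by a pointwise decomposition of $|\mathfrak{X}(\psi)/\psi|$ and then to bound each piece separately under the Brownian expectation $\mathbb{E}_{o}$. In a local holomorphic coordinate $z$ with $\mathfrak{X}=a(z)\partial_{z}$ and $a$ nowhere vanishing, the identities $\|\mathfrak{X}\|^{2}=g|a|^{2}$ and $\|\nabla_{S}\psi\|^{2}=|\partial_{z}\psi|^{2}/g$ give
\[
\left|\frac{\mathfrak{X}(\psi)}{\psi}\right|^{2}=\|\mathfrak{X}\|^{2}\cdot\frac{\|\nabla_{S}\psi\|^{2}}{|\psi|^{2}(1+\log^{2}|\psi|)}\cdot(1+\log^{2}|\psi|),
\]
so that, using $2\log^{+}x=\log^{+}x^{2}$,
\[
2\log^{+}\!\left|\frac{\mathfrak{X}(\psi)}{\psi}\right|\le \log^{+}\|\mathfrak{X}\|^{2}+\log^{+}\!\frac{\|\nabla_{S}\psi\|^{2}}{|\psi|^{2}(1+\log^{2}|\psi|)}+\log(1+\log^{2}|\psi|)+O(1).
\]
Applying $\mathbb{E}_{o}[\,\cdot\,(X_{\tau_{r}})]$ converts the left side into $2m(r,\mathfrak{X}(\psi)/\psi)$ via (\ref{w121}).

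The middle term on the right is controlled directly by Proposition \ref{999a}, contributing $\log T(r,\psi)+O(\log^{+}\log T(r,\psi)+r\sqrt{-\kappa(r)}+\log^{+}\log r)$ off a set of finite Lebesgue measure. For the third term I would use $\log(1+u^{2})\le 2\log^{+}|u|+\log 2$ with $u=\log|\psi|$, the split $|\log|\psi||\le \log^{+}|\psi|+\log^{+}|\psi|^{-1}$, Jensen's inequality $\mathbb{E}_{o}[\log^{+}Y]\le \log^{+}\mathbb{E}_{o}[Y]+O(1)$, and the First Main Theorem (Theorem \ref{first}) bounds $m(r,\psi),\,m(r,\psi^{-1})\le T(r,\psi)+O(1)$, getting $2\log T(r,\psi)+O(1)$. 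After summing these two and halving, the total contribution is exactly $\tfrac{3}{2}\log T(r,\psi)$ together with the claimed error.

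The remaining piece $\mathbb{E}_{o}[\log^{+}\|\mathfrak{X}\|^{2}]$ is the one that has to carry the $-\kappa(r)r^{2}$ error. Because $\log|a|^{2}$ is harmonic and $\Delta_{S}\log g=-4K_{S}\ge 0$, the function $\log\|\mathfrak{X}\|^{2}$ is subharmonic, so Dynkin's formula combined with $-K_{S}\le -\kappa(r)$ on $D(r)$ yields
\[
\mathbb{E}_{o}\bigl[\log\|\mathfrak{X}\|^{2}(X_{\tau_{r}})\bigr]\le\log\|\mathfrak{X}(o)\|^{2}-2\kappa(r)\mathbb{E}_{o}[\tau_{r}].
\]
The Cartan--Hadamard Laplacian-comparison estimate $\mathbb{E}_{o}[\tau_{r}]\le r^{2}/2$ then gives $-\kappa(r)r^{2}+O(1)$; for the Poincar\'e disc the sharper radial ODE integration $\mathbb{E}_{o}[\tau_{r}]=4\log\cosh(r/2)\sim 2r$ upgrades this to $O(r)$. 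To pass from $\log$ to $\log^{+}$ in the expectation, I would combine the above Dynkin bound with the superharmonic counterpart $\mathbb{E}_{o}[\log\|\mathfrak{X}\|^{-2}]\le O(1)$, or equivalently use a pointwise Hadamard-type comparison for the conformal factor $g$ on $D(r)$, preserving the same order $O(-\kappa(r)r^{2})+O(1)$. Collecting all three expectation bounds, dividing by two, and absorbing $r\sqrt{-\kappa(r)}\le -\kappa(r)r^{2}+O(1)$ into the error, we recover exactly the stated inequality.

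The main obstacle is this last step, the passage from $\log\|\mathfrak{X}\|^{2}$ to $\log^{+}\|\mathfrak{X}\|^{2}$: Dynkin's formula is built for signed subharmonic integrands, while the $\log^{+}$ truncation discards the region $\|\mathfrak{X}\|<1$ in a way that a purely stochastic argument cannot detect. Closing this gap without losing the $-\kappa(r)r^{2}$ order requires genuine geometric input — either a conformal-factor comparison (Ahlfors/Hadamard style) on the simply-connected, non-positively curved surface $S$, or an It\^o-martingale decomposition that tracks where $\log\|\mathfrak{X}\|^{2}$ crosses zero along the Brownian path. Once this is settled, the rest consists of routine combinations of Jensen's inequality, the First Main Theorem, and the Calculus Lemma of Theorem \ref{cal} already in hand.
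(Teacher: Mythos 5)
Your decomposition of $|\mathfrak{X}(\psi)/\psi|^2$ into $\|\mathfrak{X}\|^2$, the $\Phi$-normalized gradient quantity, and $1+\log^2|\psi|$ is exactly the one the paper uses, and the treatment of the second and third pieces (Proposition~\ref{999a}, then Jensen plus the First Main Theorem) matches the paper essentially verbatim. For the $\|\mathfrak{X}\|^2$ piece the paper likewise uses subharmonicity of $\log\|\mathfrak{X}\|^2 = \log g + \log|a|^2$ (with $\Delta_S\log g = -4K_S$ and $\log|a|^2$ harmonic), Dynkin, and Proposition~\ref{yyyy}; your sharper bound $\mathbb{E}_o[\tau_r]\le r^2/2$ and, for the Poincar\'e disc, the exact radial ODE value $\mathbb{E}_o[\tau_r]=4\log\cosh(r/2)\sim 2r$ are both fine replacements for the paper's rougher $\mathbb{E}_o[\tau_r]\le 4r^2$ and its explicit computation of the term $C$.

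The only substantive divergence is the point you yourself flag: the passage from $\log\|\mathfrak{X}\|^2$ to $\log^+\|\mathfrak{X}\|^2$. The paper handles it by observing that pointwise, away from the level set $\{\|\mathfrak{X}\|=1\}$, one has $\Delta_S\log^+\|\mathfrak{X}\|\le\Delta_S\log\|\mathfrak{X}\|$, and that $\log^+\|\mathfrak{X}\|=0$ where $\|\mathfrak{X}\|\le 1$, which (with Dynkin) delivers the bound $C\le\tfrac{1}{4}\mathbb{E}_o\bigl[\int_0^{\tau_r}\Delta_S\log\|\mathfrak{X}\|^2\,dt\bigr]+O(1)$ modulo the local-time contribution on $\{\|\mathfrak{X}\|=1\}$, which is not spelled out. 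Your alternative fix via ``$\mathbb{E}_o[\log\|\mathfrak{X}\|^{-2}]\le O(1)$'' does not close this: superharmonicity of $-\log\|\mathfrak{X}\|^2$ gives $\mathbb{E}_o[\log\|\mathfrak{X}(X_{\tau_r})\|^{-2}]\le O(1)$, which only bounds $\mathbb{E}_o[\log\|\mathfrak{X}\|^2]$ from below; it says nothing about $\mathbb{E}_o[\log^+\|\mathfrak{X}\|^2]$, since the discrepancy $\mathbb{E}_o[\log^-\|\mathfrak{X}\|^2]=\mathbb{E}_o[\log^+\|\mathfrak{X}\|^{-2}]$ is precisely the positive part of a superharmonic quantity, which the supermartingale inequality does not control. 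So the gap you identify is real in both your write-up and the paper's, and your proposed workaround is circular; it needs either the It\^o--Tanaka/local-time accounting you mention, or a pointwise lower bound on $g$ over $D(r)$ of the right size, neither of which is carried out here.
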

\begin{proof} Write $\mathfrak{X}=a\frac{\partial}{\partial z},$ then $\|\mathfrak{X}\|^2=g|a|^2.$  We have
\begin{eqnarray*}
 m\Big(r,\frac{\mathfrak{X}(\psi)}{\psi}\Big)
&=& \int_{\partial D(r)}\log^+\frac{|\mathfrak{X}(\psi)|}{|\psi|}(x)d\pi^r_o(x) \\
&\leq& \frac{1}{2}\int_{\partial D(r)}\log^+\frac{|\mathfrak{X}(\psi)|^2}{\|\mathfrak{X}\|^2|\psi|^2(1+\log^2|\psi|)}(x)d\pi^r_o(x) \\
&&+
\frac{1}{2}\int_{\partial D(r)}\log(1+\log^2|\psi(x)|)d\pi^r_o(x)+\frac{1}{2}\int_{\partial D(r)}\log^+\|\mathfrak{X}_x\|^2d\pi^r_o(x) \\
&:=& A+B+C.
\end{eqnarray*}
We handle $A,
B,C$ respectively.  For $A,$ it yields from Proposition \ref{999a} that
\begin{eqnarray*}
A&=& \frac{1}{2}\int_{\partial D(r)}\log^+\frac{|a|^2\left|\frac{\partial \psi}{\partial z}\right|^2}{g|a|^2|\psi|^2(1+\log^2|\psi|)}(x)d\pi^r_o(x) \\
&=& \frac{1}{2}\int_{\partial D(r)}\log^+\frac{\|\nabla_S\psi\|^2}{|\psi|^2(1+\log^2|\psi|)}(x)d\pi^r_o(x) \\
 &\leq&\frac{1}{2}\log T(r,\psi)+O\Big{(}\log^+\log T(r,\psi)+r\sqrt{-\kappa(r)}+\log^+\log r\Big{)}.
\end{eqnarray*}
For $B,$ the Jensen's inequality implies that
\begin{eqnarray*}
B &\leq&  \int_{\partial D(r)}\log\Big(1+\log^+|\psi(x)|+\log^+\frac{1}{|\psi(x)|}\Big)d\pi^r_o(x) \\
&\leq& \log\int_{\partial D(r)}\Big(1+\log^+|\psi(x)|+\log^+\frac{1}{|\psi(x)|}\Big)d\pi^r_o(x) \\
&\leq& \log T(r,\psi)+O(1).
\end{eqnarray*}
Finally, we estimate $C.$ By the condition,  $\|\mathfrak{X}\|>0.$ Since $S$ is non-positively curved and $a$ is holomorphic, then $\log\|\mathfrak{X}\|$ is subharmonic, i.e., $\Delta_S\log\|\mathfrak X\|\geq0.$
Clearly, we have
\begin{equation}\label{h1}
\Delta_S\log^+\|\mathfrak{X}\|\leq \Delta_S\log \|\mathfrak{X}\|
\end{equation}
for $x\in S$ satisfying $\|\mathfrak{X}_x\|\neq1.$ Notice that  
\begin{equation}\label{h2}
\log^+\|\mathfrak{X}_x\|=0
\end{equation}
for $x\in S$ satisfying $\|\mathfrak{X}_x\|\leq1.$ Note that Dynkin formula cannot be  directly applied to $\log^+\|\mathfrak{X}\|,$ but by virtue of (\ref{h1}) and (\ref{h2}), it is not hard to 
verify
\begin{eqnarray}\label{ok}
\ \ \ \ \ C &=& \frac{1}{2}\mathbb E_o\left[\log^+\|\mathfrak{X}(X_{\tau_r})\|^2\right] \\
&\leq&  \frac{1}{4}\mathbb E_o\left[\int_0^{\tau_r}\Delta_S\log\|\mathfrak{X}(X_t)\|^2dt \right]+O(1) \nonumber \\
 &=& \frac{1}{4}\mathbb E_o\left[\int_0^{\tau_r}\Delta_S\log g(X_t)dt \right]+
 \frac{1}{4}\mathbb E_o\left[\int_0^{\tau_r}\Delta_S\log|a(X_t)|^2dt \right]+O(1) \nonumber \\
 &=& -\mathbb E_o\left[\int_0^{\tau_r}K_S(X_t)dt \right]+O(1) \nonumber \\
 &\leq& -\kappa(r)\mathbb E_o\big[\tau_r\big]+O(1), \nonumber
\end{eqnarray}
where we use the fact $K_S=-(\Delta_S\log g)/4.$ Thus, we prove the first assertion  by  $\mathbb E_o\left[\tau_r\right]\leq 4r^2,$  due to Proposition \ref{yyyy} below. When $S=\mathbb D$ with Poincar\'e metric,  $\kappa(r)\equiv-1$ and $g=2/(1-|z|^2)^{2}.$ Take $\mathfrak X=\partial/\partial z,$ then
$C$ in (\ref{ok}) is   estimated  as follows
\begin{eqnarray*}
C &=& \frac{1}{2}\mathbb E_o\left[\log^+\|\mathfrak{X}(X_{\tau_r})\|^2\right] \\
&=&  \frac{1}{2}\int_{\partial D(r)}\log\frac{2}{(1-(e^r-1)^2/(e^r+1)^2)^2}\frac{d\theta}{2\pi} \\
&=& \log\frac{\sqrt2}{1-(e^r-1)^2/(e^r+1)^2} \\
&\leq& r+O(1).
\end{eqnarray*}
This implies that the second assertion holds.
\end{proof}
 \begin{proposition}\label{yyyy}  We have
$$\mathbb E_o\big[\tau_r\big]\leq 4r^2.$$
\end{proposition}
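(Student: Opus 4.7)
The plan is to apply Dynkin's formula to $u(x) = r(x)^2$, the squared geodesic distance from the reference point $o$, where the required Laplacian lower bound will come from Hessian comparison under the non-positive curvature hypothesis.

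First I would settle the regularity of $u$. Since $(S,g)$ is simply-connected, complete, and has $K_S \leq 0$, the Cartan--Hadamard theorem guarantees that the exponential map at $o$ is a diffeomorphism $T_oS\to S$ and that the cut locus of $o$ is empty. Hence $r$ is $\mathscr{C}^{\infty}$ on $S\setminus\{o\}$, while $u = r^2$ extends smoothly across $o$ (in normal coordinates it is just $\sum x_i^2$), so $u \in \mathscr{C}^2(S)$ everywhere. Next I would invoke the Hessian comparison theorem: $K_S\leq 0$ gives $\operatorname{Hess}(r)(v,v) \geq r^{-1}\bigl(|v|^2 - \langle v,\nabla_S r\rangle^2\bigr)$ for every tangent vector $v$ at points with $r>0$. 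Tracing over an orthonormal basis and using $|\nabla_S r|=1$ yields $\Delta_S r \geq 1/r$, and therefore
$$\Delta_S u \;=\; 2r\,\Delta_S r + 2\,|\nabla_S r|^2 \;\geq\; 2 + 2 \;=\; 4$$
on $S$ (up to the normalization factor implicit in the author's convention for $\Delta_S$).

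With this Laplacian bound in hand I would apply Dynkin's formula to $u$ with the truncated stopping time $\tau_r\wedge t$ (which is bounded, so every term is finite):
$$\mathbb{E}_o\bigl[u(X_{\tau_r\wedge t})\bigr] \;=\; u(o) + \tfrac{1}{2}\,\mathbb{E}_o\!\left[\int_0^{\tau_r\wedge t}\!\Delta_S u(X_s)\,ds\right] \;\geq\; 2\,\mathbb{E}_o[\tau_r\wedge t].$$
Since $r(X_s)\leq r$ for $s\leq\tau_r$, the left-hand side is at most $r^2$, so $\mathbb{E}_o[\tau_r\wedge t]\leq r^2/2$. Letting $t\to\infty$ and invoking monotone convergence on the right-hand side gives $\mathbb{E}_o[\tau_r]\leq r^2/2 \leq 4r^2$, which is the claimed bound (and in fact considerably stronger).

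I expect no serious obstacle: regularity of $u$ across $o$ is automatic from Cartan--Hadamard, and the $\tau_r\wedge t$ truncation disposes of integrability concerns a priori. The only point demanding care is matching the precise normalization of $\Delta_S$ and the factor $\tfrac12$ in the generator of $X_t$ so that the Hessian comparison constant lines up; since the announced bound $4r^2$ is a loose envelope, this bookkeeping only needs to be done up to a universal multiplicative constant.
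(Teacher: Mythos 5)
Your proposal is correct and follows essentially the same strategy as the paper's own proof: both rest on the Laplacian comparison $\Delta_S r \geq 1/r$ for the distance function on a simply-connected non-positively curved surface, fed into It\^o/Dynkin. The one genuine (and welcome) technical difference is that you apply Dynkin's formula to $u=r^2$, which is globally smooth by Cartan--Hadamard, whereas the paper applies It\^o's formula to the distance $r_1$ from an auxiliary point $o_1\neq o$ (to dodge the non-smoothness of $r$ at $o$) and then lets $o_1\to o$; your choice removes that limiting step and incidentally yields the sharper bound $\mathbb E_o[\tau_r]\leq r^2/2$, comfortably inside the stated $4r^2$.
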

\begin{proof}  The argument follows essentially from  Atsuji \cite{atsuji}, but here we provide a simpler proof though a rougher estimate.
Let $X_t$ be the Brownian motion in $S$ started at  $o\not=o_1,$ where $o_1\in D(r).$  Let  $r_1(x)$ be the distance function of $x$ from $o_1.$
Apply It$\rm{\hat{o}}$ formula to $r_1(x)$
\begin{equation}\label{kiss}
  r_1(X_t)-r_1(X_0)=B_t-L_t+\frac{1}{2}\int_0^t\Delta_Sr_1(X_s)ds,
\end{equation}
here $B_t$ is the standard Brownian motion in $\mathbb R,$ and $L_t$ is a local time on cut locus of $o,$ an increasing process
which increases only at cut loci of $o.$ Since $S$ is simply connected and  non-positively  curved, then
$$\Delta_Sr_1(x)\geq\frac{1}{r_1(x)}, \ \ L_t\equiv0.$$
By (\ref{kiss}), we arrive at
$$r_1(X_t)\geq B_t+\frac{1}{2}\int_0^t\frac{ds}{r_1(X_s)}.$$
Let $t=\tau_r$ and take expectation on both sides of the above inequality, then it yields that 
$$\max_{x\in \partial D(r)} r_1(x)\geq \frac{\mathbb E_o[\tau_r]}{2\max_{x\in \partial D(r)} r_1(x)}.$$
Let $o'\rightarrow o,$ 
we are led to the conclusion.
\end{proof}
Finally, let us prove Theorem \ref{ldl2}:
\begin{proof} Note that
\begin{eqnarray*}
m\Big(r,\frac{\mathfrak{X}^k(\psi)}{\psi}\Big)&\leq& \sum_{j=1}^k m\Big(r,\frac{\mathfrak{X}^j(\psi)}{\mathfrak{X}^{j-1}(\psi)}\Big).
\end{eqnarray*}
We conclude the proof by using  Proposition \ref{hello1} below.
\end{proof}
\begin{proposition}\label{hello1} We have
\begin{eqnarray*}
m\Big(r,\frac{\mathfrak{X}^{k+1}(\psi)}{\mathfrak{X}^{k}(\psi)}\Big)
&\leq& \frac{3}{2}\log T(r,\psi)+O\Big(\log^+\log T(r,\psi)-\kappa(r)r^2+\log^+\log r\Big) \big{\|},
\end{eqnarray*}
 where $\kappa$ is defined by $(\ref{kappa}).$ In particular, if $S$ is the Poincar\'e disc,  then 
 \begin{eqnarray*}
m\Big(r,\frac{\mathfrak{X}^{k+1}(\psi)}{\mathfrak{X}^{k}(\psi)}\Big)
&\leq& \frac{3}{2}\log T(r,\psi)+O\Big(\log^+\log T(r,\psi)+r\Big) \big{\|}.
\end{eqnarray*}
\end{proposition}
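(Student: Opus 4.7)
The plan is to view Proposition \ref{hello1} as the first-order Logarithmic Derivative Lemma (Theorem \ref{ldl1}) applied to the meromorphic function $\Phi_k := \mathfrak{X}^k(\psi)$, coupled with a characteristic comparison that replaces $T(r,\Phi_k)$ by $T(r,\psi)$ up to controlled error. First I would verify that $\Phi_k$ is nonconstant meromorphic for every $k\geq 0$: since $\mathfrak{X}$ is a nowhere-vanishing holomorphic vector field, $\mathfrak{X}(\Phi)\equiv 0$ would force $\Phi$ to be constant along the holomorphic flow of $\mathfrak{X}$ and hence on $S$, so by induction $\Phi_k$ is nonconstant meromorphic.

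Applying Theorem \ref{ldl1} to $\Phi_k$ in place of $\psi$ immediately yields
\begin{equation*}
m\Big(r,\frac{\mathfrak{X}^{k+1}(\psi)}{\mathfrak{X}^{k}(\psi)}\Big)\leq \frac{3}{2}\log T(r,\Phi_k)+O\bigl(\log^+\log T(r,\Phi_k)-\kappa(r)r^2+\log^+\log r\bigr)\ \big\|,
\end{equation*}
with the analogous refinement (error $O(\log^+\log T(r,\Phi_k)+r)$) in the Poincar\'e disc case. To convert this to the stated bound I need $\log T(r,\Phi_k)\leq\log T(r,\psi)+O(1)$ off an exceptional set. The key observation for this is that since $\mathfrak{X}$ is holomorphic and nowhere-vanishing, $\mathfrak{X}(\Phi)$ has a pole of order exactly $m+1$ at each point where $\Phi$ has a pole of order $m$ (a direct local computation $\mathfrak{X}=a\,\partial/\partial z$ with $a(p)\neq 0$), whence $N(r,\mathfrak{X}(\Phi))\leq 2N(r,\Phi)$. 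Combined with $m(r,\mathfrak{X}(\Phi))\leq m(r,\Phi)+m(r,\mathfrak{X}(\Phi)/\Phi)+O(1)$ and Theorem \ref{ldl1} applied to $\Phi$ itself, this gives
\begin{equation*}
T(r,\mathfrak{X}(\Phi))\leq 2T(r,\Phi)+\frac{3}{2}\log T(r,\Phi)+O\bigl(\log^+\log T(r,\Phi)-\kappa(r)r^2+\log^+\log r\bigr)\ \big\|.
\end{equation*}

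Iterating this bound $k$ times starting from $\Phi_0=\psi$ (the union of the $k$ exceptional sets of finite Lebesgue measure remains of finite measure) yields $T(r,\Phi_k)\leq 2^k T(r,\psi)+O\bigl(k\cdot(\log T(r,\psi)-\kappa(r)r^2+\log^+\log r)\bigr)$ off an exceptional set. Taking logarithms gives $\log T(r,\Phi_k)=\log T(r,\psi)+O(1)$ and the same for the double logarithm, and substituting these back into the first display completes the proof; the Poincar\'e disc version follows by the parallel specialization with $r$ in place of $-\kappa(r)r^2+\log^+\log r$. The main obstacle is essentially bookkeeping --- tracking that the finitely many exceptional sets from iterated applications of Theorem \ref{ldl1} combine into a single set of finite measure, and that the multiplicative factor $2^k$ from the iterated characteristic comparison is harmlessly absorbed once a logarithm is taken --- rather than any new analytic content beyond Theorem \ref{ldl1}.
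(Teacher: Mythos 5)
Your proposal follows essentially the same route as the paper: both prove the comparison $T(r,\mathfrak{X}^k(\psi))\leq 2^kT(r,\psi)+O\bigl(\log T(r,\psi)-\kappa(r)r^2+\log^+\log r\bigr)$ (the paper's claim (\ref{hello})) by iterating the decomposition $N(r,\mathfrak{X}(\Phi))\leq 2N(r,\Phi)$, $m(r,\mathfrak{X}(\Phi))\leq m(r,\Phi)+m(r,\mathfrak{X}(\Phi)/\Phi)$ together with Theorem~\ref{ldl1}, then substitute into Theorem~\ref{ldl1} applied to $\mathfrak{X}^k(\psi)$. One small slip: your inductive argument that $\Phi_k=\mathfrak{X}^k(\psi)$ is nonconstant is not valid as stated, since it only shows $\Phi_k\not\equiv 0$ — for $\psi=z$ on $\CC$ with $\mathfrak X=\partial/\partial z$ one has $\Phi_1\equiv 1$ constant — but in that degenerate case $\mathfrak{X}^{k+1}(\psi)\equiv 0$, the proximity function on the left vanishes, and the stated inequality is trivial, so the conclusion is unaffected.
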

\begin{proof}
For the first assertion, we  claim that
\begin{eqnarray}\label{hello}
\ \ T\big{(}r,\mathfrak{X}^k(\psi)\big{)}
&\leq& 2^kT(r,\psi)+O\Big(\log T(r,\psi)-\kappa(r)r^2+\log^+\log r\Big).
\end{eqnarray}
By virtue of Theorem \ref{ldl1}, when $k=1$
\begin{eqnarray*}
T(r,\mathfrak{X}(\psi))
&=& m(r,\mathfrak{X}(\psi))+N(r,\mathfrak{X}(\psi)) \\
&\leq& m(r,\psi)+2N(r,\psi)+m\Big(r,\frac{\mathfrak{X}(\psi)}{\psi}\Big) \\
&\leq& 2T(r,\psi)+m\Big(r,\frac{\mathfrak{X}(\psi)}{\psi}\Big) \\
&\leq& 2T(r,\psi)+O\Big(\log T(r,\psi)-\kappa(r)r^2+\log^+\log r\Big)
\end{eqnarray*}
holds for $r>1$ outside a set of finite Lebesgue measure.
Assuming now that the claim holds for $k\leq n-1.$ By induction, we only need to prove the claim in the case when $k=n.$ By this claim for $k=1$ proved above and Theorem \ref{ldl1} repeatedly, we conclude that
\begin{eqnarray*}
 T\big{(}r,\mathfrak{X}^n(\psi)\big{)}
&\leq& 2T\big{(}r,\mathfrak{X}^{n-1}(\psi)\big{)}+O\Big(\log T\big{(}r,\mathfrak{X}^{n-1}(\psi)\big{)}-\kappa(r)r^2+\log^+\log r\Big) \\
&\leq& 2^{n}T(r,\psi)+O\Big(\log T(r,\psi)-\kappa(r)r^2+\log^+\log r\Big) \\
&&
+O\Big(\log T\big{(}r,\mathfrak{X}^{n-1}(\psi)\big{)}-\kappa(r)r^2+\log^+\log r\Big) \\
&\leq& 2^nT(r,\psi)+O\Big(\log T(r,\psi)-\kappa(r)r^2+\log^+\log r\Big) \\
&&+O\left(\log T\big{(}r,\mathfrak{X}^{n-1}(\psi)\big{)}\right) \\
&& \cdots\cdots\cdots \\
&\leq& 2^nT(r,\psi)+O\Big(\log T(r,\psi)-\kappa(r)r^2+\log^+\log r\Big). \ \ \ \ \
\end{eqnarray*}
So, the claim (\ref{hello}) is proved.
 Employing Theorem \ref{ldl1} and (\ref{hello}) to get
\begin{eqnarray*}
&& m\left(r,\frac{\mathfrak{X}^{k+1}(\psi)}{\mathfrak{X}^{k}(\psi)}\right) \\
&\leq& \frac{3}{2}\log T\big{(}r,\mathfrak{X}^k(\psi)\big{)}+O\Big(\log^+\log T(r,\mathfrak{X}^k(\psi))-\kappa(r)r^2+\log^+\log r\Big) \\
&\leq& \frac{3}{2}\log T(r,\psi)+O\Big(\log^+\log T(r,\psi)-\kappa(r)r^2+\log^+\log r\Big). \ \ \ \ \
\end{eqnarray*}
This proves the first assertion, and the second assertion is proved similarly   by replacing $\kappa(r)r^2$ by $-r$ due to the second conclusion of Theorem \ref{ldl1}.
\end{proof}

\section{An extension of H. Cartan's  theory}

\subsection{Cartan-Nochka's approach}~

 Let $S$ be an open Riemann surface with  a nowhere-vanishing holomorphic vector field $\mathfrak X.$
Let $$f:S\rightarrow\mathbb P^n(\mathbb C)$$ be a holomorphic curve into complex projective space with the Fubini-Study form $\omega_{FS}.$
 Locally, we may write $f=[f_0:\cdots:f_n],$  a reduced representation, i.e., $f_0=w_0\circ f,\cdots$ are local holomorphic functions without common zeros,
  where $w=[w_0:\cdots:w_n]$ denotes homogenous coordinate system of $\mathbb P^n(\mathbb C).$ Set $\|f\|^2=|f_0|^2+\cdots+|f_n|^2.$
  Noting that $\Delta_S\log\|f\|^2$ is independent of the choices of representations of $f,$ hence it is globally  defined on $S.$
 The height function of $f$ is defined by
$$ T_f(r)= \pi\int_{D(r)}g_r(o,x)f^*\omega_{FS}=\frac{1}{4}\int_{D(r)}g_r(o,x)\Delta_S\log\|f(x)\|^2dV(x). 
$$ 
Given a hyperplane  $H$ of $\mathbb P^n(\mathbb C)$ with defining function
$\hat{H}(w)=h_0w_0+\cdots+h_nw_n.$ Set $\|\hat{H}\|^2=|h_0|^2+\cdots+|h_n|^2.$
The counting function of $f$ with respect to $H$ is defined by
\begin{eqnarray*}
N_f(r,H) &=& \pi\int_{D(r)}g_r(o,x)dd^c\big{[}\log|\hat{H}\circ f(x)|^2\big{]} \\
&=&\frac{1}{4}\int_{D(r)}g_r(o,x)\Delta_S\log|\hat{H}\circ f(x)|^2dV(x).
\end{eqnarray*}
We define the proximity function of $f$ with respect to $H$  by
$$m_f(r,H)=\int_{\partial D(r)}\log\frac{\|\hat{H}\|\|f(x)\|}{|\hat{H}\circ f(x)|}d\pi_o^r(x).$$
\begin{proposition}\label{v000} Assume that $f_k\not\equiv0$ for some $k.$ We have
$$\max_{0\leq j\leq n}T\Big(r,\frac{f_j}{f_k}\Big)\leq T_f(r)+O(1).$$
\end{proposition}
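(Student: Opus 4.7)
My plan is to fix indices $j$ and $k$ with $f_k\not\equiv 0$, set $\psi = f_j/f_k$, and prove $T(r,\psi)\leq T_f(r)+O(1)$ separately for each $j$; taking the maximum then yields the conclusion. The strategy is the standard Nevanlinna decomposition $T(r,\psi) = m(r,\psi) + N(r,\psi)$, arranged so that the ``loss'' appearing in the proximity estimate is exactly the counting function $N_f(r,H_k)$ that one picks up again in the pole term, producing a perfect cancellation.

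For the proximity function, I would use the elementary pointwise bounds $|f_j|\leq \|f\|$ and $\|f\|\geq |f_k|$ to write
$$\log^+|f_j/f_k| \;\leq\; \log\|f\| - \log|f_k|,$$
and integrate over $\partial D(r)$ against $d\pi_o^r$. Since every open Riemann surface is Stein, $f^*\Ocal(1)$ is trivial on $S$ and $(f_0,\dots,f_n)$ yields a global nowhere-vanishing reduced representation, so $\log\|f\|$ is smooth. Applying Dynkin's formula exactly as in the proof of Theorem \ref{first} gives
$$\int_{\partial D(r)}\log\|f\|\,d\pi_o^r \;=\; \log\|f(o)\|+T_f(r).$$
Applying Dynkin to the plurisubharmonic function $\log|f_k|$ — legitimate by the pluripolar-singularity extension noted in Section 1.2.B — yields, with $H_k=\{w_k=0\}$,
$$\int_{\partial D(r)}\log|f_k|\,d\pi_o^r \;=\; \log|f_k(o)|+N_f(r,H_k).$$
Combining, $m(r,\psi)\leq T_f(r)-N_f(r,H_k)+O(1)$.

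For the counting function, in a reduced representation the pole order of $\psi=f_j/f_k$ at a point $x$ equals $\max\bigl(0,\nu_{f_k}(x)-\nu_{f_j}(x)\bigr)\leq \nu_{f_k}(x)$, so weighting by $\pi g_r(o,\cdot)$ gives $N(r,\psi)\leq N_f(r,H_k)$. Adding this to the proximity bound cancels the $N_f(r,H_k)$ terms and leaves $T(r,\psi)\leq T_f(r)+O(1)$. Taking the maximum over $j$ completes the proof.

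The only mild technicality is invoking Dynkin's identity for $\log|f_k|$, which has logarithmic singularities at the zeros of $f_k$; this is precisely the case authorized by the remark in Section 1.2.B, so I expect no substantive obstacle. Everything else is bookkeeping inside the standard $m+N$ decomposition.
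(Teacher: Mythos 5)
Your proof is correct, and it reaches the result by a genuinely different (though classical) route from the paper's.  The paper's proof is very short: it invokes its comparison relation $T(r,\psi)+O(1)=\hat T_\psi(r)\leq T_\psi(r)+O(1)$ from (3.5), and then asserts
$T_{f_j/f_k}(r)\leq \tfrac14\int_{D(r)}g_r(o,x)\Delta_S\log\|f(x)\|^2\,dV(x)+O(1)=T_f(r)+O(1)$.
That second step is \emph{not} a pointwise inequality between the two Laplacian densities (indeed, $dd^c\log(|\psi_0|^2+|\psi_1|^2)\leq dd^c\log\|f\|^2$ fails pointwise in general); it silently relies on one more Green--Jensen/Dynkin application, namely to the nonpositive function $\log\big((|f_j|^2+|f_k|^2)/\|f\|^2\big)$, together with the positivity of $\Delta_S\log|h|^2$ for $h=\gcd(f_j,f_k)$ and of the Green kernel.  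Your argument instead runs the classical Nevanlinna decomposition $T(r,\psi)=m(r,\psi)+N(r,\psi)$: you bound the proximity term via the pointwise inequality $\log^+|f_j/f_k|\leq\log\|f\|-\log|f_k|$, apply Dynkin twice (to $\log\|f\|$ and to $\log|f_k|$) to produce $T_f(r)-N_f(r,H_k)$, and then absorb the pole term using $N(r,\psi)\leq N_f(r,H_k)$, so the $N_f(r,H_k)$'s cancel.  This is exactly the First Main Theorem for the coordinate hyperplane $H_k$, unpacked.  Both routes ultimately rest on the same Dynkin/co-area machinery, but yours makes every application explicit, whereas the paper compresses one of them into the single line ``$\leq T_f(r)+O(1)$.''  Two small remarks: (a) your observation that an open Riemann surface is Stein, so $f$ admits a \emph{global} nowhere-vanishing reduced representation and $\log\|f\|$ is smooth, is correct and cleaner than the paper's local formulation; (b) as in the paper's own First Main Theorem, you should tacitly assume $f_k(o)\neq 0$ (otherwise perturb $o$), which you have implicitly done.
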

\begin{proof} Clearly,  $f_j/f_k$ is well defined on $S.$  From (\ref{relation}), we get
\begin{eqnarray*}
 T\Big(r,\frac{f_j}{f_k}\Big)
 &\leq&T_{f_j/f_k}(r)+O(1)     \\
 &\leq& \frac{1}{4}\int_{D(r)}g_r(o,x)\Delta_S\log\Big{(}\sum_{j=0}^n|f_j(x)|^2\Big{)}dV(x)+O(1) \\
  &=& T_{f}(r)+O(1).
\end{eqnarray*}
This completes the proof.
\end{proof}
\noindent\textbf{Wronskian determinants.}
 Let
 $H_1,\cdots, H_q$ be $q$ hyperplanes of $\mathbb P^n(\mathbb C)$ in $N$-subgeneral position with defining functions given by
$$\hat{H}_j(w)=\sum_{k=0}^nh_{jk}w_{k}, \ \ 1\leq j\leq q.$$
 Assume that $f$ is linearly non-degenerate.  We  define Wronskian determinant and logarithmic
Wronskian determinant of $f$ with respect to $\mathfrak{X}$ respectively  by
$$W_\mathfrak{X}(f_0,\cdots,f_n)=\left|
  \begin{array}{ccc}
   f_0 & \cdots & f_n \\
   \mathfrak{X}(f_0) & \cdots & \mathfrak{X}(f_n) \\
    \vdots & \vdots & \vdots \\
     \mathfrak{X}^n(f_0) & \cdots & \mathfrak{X}^n(f_n) \\
  \end{array}
\right|, \ \ \Delta_\mathfrak{X}(f_0,\cdots,f_n)=\left|
  \begin{array}{ccc}
   1 & \cdots & 1 \\
   \frac{\mathfrak{X}(f_0)}{f_0} & \cdots & \frac{\mathfrak{X}(f_n)}{f_n} \\
    \vdots & \vdots & \vdots \\
     \frac{\mathfrak{X}^n(f_0)}{f_0} & \cdots & \frac{\mathfrak{X}^n(f_n)}{f_n} \\
  \end{array}
\right|.$$
For a $(n+1)\times(n+1)$-matrix $A$  and a nonzero meromorphic function $\phi$  on $S,$ we can check the following basic properties:
\begin{eqnarray*}
\Delta_\mathfrak{X}(\phi f_0,\cdots,\phi f_n)&=&\Delta_\mathfrak{X}(f_0,\cdots,f_n), \\
W_\mathfrak{X}(\phi f_0,\cdots,\phi f_n)&=&\phi^{n+1}W_\mathfrak{X}(f_0,\cdots,f_n), \\
W_\mathfrak{X}\big{(}(f_0,\cdots,f_n)A\big{)}&=&\det(A)W_\mathfrak{X}(f_0,\cdots,f_n), \\
W_\mathfrak{X}(f_0,\cdots,f_n)&=&\Big{(}\prod_{j=0}^nf_j\Big{)}\Delta_\mathfrak{X}(f_0,\cdots,f_n).
\end{eqnarray*}
Clearly,  $\Delta_\mathfrak{X}(f_0,\cdots,f_n)$ is globally defined on $S.$

\noindent\textbf{Nochka weights.}
For a subset $Q=\{i_1,\cdots,i_k\}\subseteq\{1,\cdots,q\},$ we define
$${\rm{rank}}(Q)={\rm{rank}}\left(
  \begin{array}{ccc}
    h_{i_10} & \cdots & h_{i_1n} \\
    \vdots & \vdots & \vdots \\
   h_{i_k0} & \cdots & h_{i_kn} \\
  \end{array}
\right).$$
 \begin{lemma}[\cite{nochka}]\label{nochka} Let $H_1,\cdots,H_q$ be hyperplanes of $\mathbb P^n(\mathbb C)$ in $N$-subgeneral position with $q>2N-n+1.$
   Then there exists rational constants $\gamma_1,\cdots,\gamma_q$ satisfying the following conditions$:$

 $\rm{(i)}$ $0<\gamma_j\leq1$ for $1\leq j\leq q;$

 $\rm{(ii)}$ Set $\gamma=\max_{1\leq j\leq q}\gamma_j,$ we have
 $$\frac{n+1}{2N-n+1}\leq\gamma\leq\frac{n}{N}, \ \ \
  \gamma(q-2N+n-1)=\sum_{j=1}^q\gamma_j-n-1.$$

 $\rm{(iii)}$ If $Q\subseteq\{1,\cdots,q\}$ with $0<|Q|\leq N+1,$ then $\sum_{j\in Q}\gamma_j\leq {\rm{rank}}(Q).$
 
 Here, $\gamma_1,\cdots,\gamma_q$ are called  the Nochka weights and $\gamma$ is called  the Nochka constant.
 \end{lemma}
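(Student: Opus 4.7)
The statement is purely combinatorial---a property of the rank function on the configuration of dual vectors $\hat H_1,\ldots,\hat H_q \in \mathbb{C}^{n+1}$---and none of the analytic machinery developed in the earlier sections (Brownian motion, Green functions, LDL, etc.) is needed. My plan is to follow Nochka's original argument, cleaned up by W.~Chen and exposited in Ru's book \cite{ru}: build the weights $\gamma_j$ inductively by repeatedly isolating a ``critical'' (extremal) subset on which constraint (iii) will be tightest, assigning a uniform rational value to its members, and recursing on what remains.

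I would begin by recording, for each non-empty $Q \subseteq \{1,\ldots,q\}$, the integer $\rank(Q)$; the $N$-subgeneral position hypothesis translates to the statement that $\rank(Q)=n+1$ whenever $|Q|=N+1$. Among subsets $Q$ with $|Q|\le N+1$, I would then select an extremal one $Q^{\star}$ minimizing a ratio of the shape $(\rank(Q)-\alpha)/(|Q|-\beta)$, calibrated so that assigning the common weight $c=\rank(Q^{\star})/|Q^{\star}|$ to every $j\in Q^{\star}$ saturates the rank constraint on $Q^{\star}$. Next, I would pass to the quotient $V=\mathbb{C}^{n+1}/\mathrm{span}\{\hat H_j : j\in Q^{\star}\}$, project the remaining dual vectors into $V$, and show that they lie in $N'$-subgeneral position there for suitably reduced parameters $(N',n')$; the induction hypothesis then supplies weights on $\{1,\ldots,q\}\setminus Q^{\star}$, and gluing with $c$ on $Q^{\star}$ produces a global candidate $(\gamma_1,\ldots,\gamma_q)$. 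Rationality of each $\gamma_j$ is automatic since every operation is rational, so (i) holds as soon as the construction terminates.

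For the verification: condition (iii) is trivial on subsets lying entirely inside or entirely disjoint from $Q^{\star}$, and on mixed subsets it follows by combining saturation on $Q^{\star}$ with the induction hypothesis applied inside $V$. Condition (ii) reduces to an arithmetic identity enforced by the extremality criterion: summing all $\gamma_j$ and comparing with $\gamma=\max_j\gamma_j$ yields $\sum_{j=1}^{q}\gamma_j = \gamma(q-2N+n-1) + n+1$, while the bounds $(n+1)/(2N-n+1)\le\gamma\le n/N$ come from counting estimates $\gamma\,|Q^{\star}|\le \rank(Q^{\star}) \le n+1$ weighed against the total. The main obstacle---and the combinatorial heart of Nochka's lemma---is the inductive step: one must verify that after extracting $Q^{\star}$ the projected configuration in $V$ is again in controllable subgeneral position with parameters $(N',n')$ for which the recursion is legitimate. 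This amounts to a careful bookkeeping of how $\rank$ decomposes across an extremal subset; once that submodular-type identity is in hand, the iteration terminates in finitely many steps because the ambient dimension strictly decreases at each stage.
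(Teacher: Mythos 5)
The paper does not prove this lemma: it is stated with the citation [\cite{nochka}] as a classical result and used as a black box (via Propositions~\ref{nochka2} and~\ref{nochka3}). So there is no ``paper's own proof'' to compare against; your proposal has to stand on its own as a proof of Nochka's theorem, and evaluated that way it is an outline of the standard strategy rather than a proof.

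You correctly identify the overall shape of the Nochka--Chen argument (as exposited in Ru \cite{ru} and Fujimoto \cite{Fujimoto}): build a chain of extremal subsets, assign weights layer by layer so that the rank constraint (iii) is saturated on the chain, and derive (ii) from the extremality. But the three places where the proposal stops are exactly the places where the theorem lives. First, the extremality functional is left as ``$(\rank(Q)-\alpha)/(|Q|-\beta)$ calibrated.'' This cannot be the naive ratio $\rank(Q)/|Q|$, which is minimized at singletons and gives nothing; the actual criterion in Nochka's construction threads $N$ and $n$ through it (it is the ratio whose minimization produces the Nochka diagram), and without writing it down none of the subsequent claims can be checked. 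Second, you declare ``the main obstacle'' to be showing that the configuration projected to $V=\CC^{n+1}/\mathrm{span}\{\hat H_j:j\in Q^{\star}\}$ is again in controllable subgeneral position with reduced parameters $(N',n')$ --- and then do not address it. This is not a bookkeeping step: it is the inductive engine, and it is also where the assumption $q>2N-n+1$ gets used, which your outline never invokes. Third, the bounds $\frac{n+1}{2N-n+1}\le\gamma\le\frac n N$ and the identity $\gamma(q-2N+n-1)=\sum\gamma_j-(n+1)$ in (ii) are simply asserted to ``come from counting estimates.'' These are the quantitative content of the lemma --- the defect bound $2N-n+1$ in Theorem~\ref{thm} depends on them --- and they require the full Nochka diagram analysis, not a one-line count. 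In short: the approach is the right one, but what you have is a map of the proof, not the proof; the inductive well-posedness and the sharp constant estimates, which you flag as obstacles, are precisely what must be supplied.
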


  \begin{lemma}[\cite{nochka}]\label{nochka1} Let $H_1,\cdots,H_q$ be hyperplanes of $\mathbb P^n(\mathbb C)$ in $N$-subgeneral position with $q>2N-n+1.$
Let $\gamma_1,\cdots,\gamma_q$ be Nochka weights for $H_1,\cdots,H_q$ and let $\beta_1,\cdots,\beta_q$ be arbitrary constants not less than $1.$
Then for each subset  $Q\subseteq\{1,\cdots,q\}$ with $0<|Q|\leq N+1,$ there are distinct $j_1,\cdots,j_{{\rm{rank}}(Q)}\in Q$ such that
$${\rm{rank}}\left(\{j_1,\cdots,j_{{\rm{rank}}(Q)}\}\right)={\rm{rank}}(Q),\ \ \ \prod_{j\in Q}\beta_j^{\gamma_j}\leq\prod_{i=1}^{{\rm{rank}}(Q)}\beta_{j_i}.$$
 \end{lemma}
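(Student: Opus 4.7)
The plan is to combine a greedy extraction of a maximal linearly independent subfamily with Abel's summation by parts, fed by property (iii) of the Nochka weights from Lemma \ref{nochka}.

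First, I would reindex the elements of $Q$ as $i_1,\dots,i_m$ so that $\beta_{i_1}\geq\beta_{i_2}\geq\cdots\geq\beta_{i_m}\geq 1$; this monotonicity, together with the hypothesis $\beta_j\geq 1$, will be essential both for the sign of the increments in Abel's formula and for the boundary term. Next, I would build a set of indices $\{j_1<\cdots<j_r\}\subseteq\{1,\dots,m\}$ with $r=\rank(Q)$ greedily: set $j_1=1$ and, having chosen $j_1,\dots,j_k$, let $j_{k+1}$ be the smallest index exceeding $j_k$ for which $\rank(\{i_{j_1},\dots,i_{j_{k+1}}\})=k+1$. The selected family $\{i_{j_1},\dots,i_{j_r}\}$ then has rank $r$ by construction, and a matroid-style argument shows that the counting function $s(\ell):=\#\{k:j_k\leq\ell\}$ agrees with $\rank(\{i_1,\dots,i_\ell\})$ for \emph{every} $\ell$: an index $i_\ell$ is skipped by the greedy procedure precisely when it depends linearly on the previously selected vectors.

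Setting $a_\ell=\log\beta_{i_\ell}$, $c_\ell=\gamma_{i_\ell}$, and partial sums $C_\ell=\sum_{k\leq\ell}c_k$, Abel summation gives
\[
\sum_{\ell=1}^m c_\ell a_\ell=\sum_{\ell=1}^{m-1}C_\ell(a_\ell-a_{\ell+1})+C_m a_m.
\]
Since every initial segment $Q_\ell=\{i_1,\dots,i_\ell\}$ is a subset of $Q$ of size $\leq N+1$, property (iii) of Lemma \ref{nochka} yields $C_\ell\leq\rank(Q_\ell)=s(\ell)$. Because $a_\ell-a_{\ell+1}\geq 0$ (monotonicity of $\beta$) and $a_m\geq 0$ (from $\beta_{i_m}\geq 1$), I may replace each $C_\ell$ by $s(\ell)$, and a second Abel summation of the resulting telescoping sum identifies
\[
\sum_{\ell=1}^{m-1}s(\ell)(a_\ell-a_{\ell+1})+s(m)a_m=\sum_{k=1}^r a_{j_k},
\]
yielding $\sum_{\ell} \gamma_{i_\ell}\log\beta_{i_\ell}\leq\sum_{k=1}^r \log\beta_{i_{j_k}}$; exponentiation produces the required product inequality.

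The main obstacle will be the bookkeeping in the greedy construction: one must confirm that the algorithm does not terminate before producing $r$ indices and that $s(\ell)=\rank(Q_\ell)$ holds for \emph{every} $\ell$, not merely at the selected positions $\ell=j_k$. Both facts are standard matroid features of the linear-independence matroid, but they must be spelled out to justify the inequality $C_\ell\leq s(\ell)$ that drives the Abel calculation. Once that is settled, the remainder is a routine summation-by-parts computation, and the hypothesis $\beta_j\geq 1$ cleanly handles the boundary term that would otherwise be delicate.
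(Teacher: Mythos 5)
The paper does not give its own proof of this lemma: it is cited directly to Nochka \cite{nochka} and used as a black box, so there is no in-paper argument to compare against. Your reconstruction is correct and is, in essence, the classical proof of Nochka's product inequality (as found, for instance, in Fujimoto's and Noguchi--Winkelmann's expositions). The three ingredients all work as you describe: the greedy selection on the linear matroid does give $s(\ell)=\rank(\{i_1,\dots,i_\ell\})$ for every $\ell$, not only at selected positions, since at each step the chosen vectors form a basis of the span of the initial segment; the monotone ordering makes $a_\ell-a_{\ell+1}\geq 0$ and the hypothesis $\beta_j\geq 1$ gives $a_m\geq 0$, so replacing $C_\ell$ by $s(\ell)\geq C_\ell$ (which follows from property (iii) of Lemma~\ref{nochka} applied to the initial segment $Q_\ell$, legal because $|Q_\ell|\leq|Q|\leq N+1$) only enlarges the sum; and the reverse Abel summation correctly collapses $\sum_\ell s(\ell)(a_\ell-a_{\ell+1})+s(m)a_m$ to $\sum_{k=1}^r a_{j_k}$ because $s$ is the running count of selected indices. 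Exponentiating gives exactly the stated inequality, and the rank condition $\rank(\{i_{j_1},\dots,i_{j_r}\})=r=\rank(Q)$ holds by construction. No gap.
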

We need the following preliminary results:
\begin{proposition}\label{nochka2} Let $H_1,\cdots,H_q$ be hyperplanes of $\mathbb P^n(\mathbb C)$ in $N$-subgeneral position with $q>2N-n+1.$
Let $\gamma$ and $\gamma_1,\cdots,\gamma_q$ be  Nochka constant and weights for $H_1,\cdots,H_q.$ Then there exists a constant $C>0$ determined by
 $\hat{H}_1\circ f,\cdots,\hat{H}_q\circ f$ such that
 $$\|f\|^{\gamma(q-2N+n-1)}\leq \frac{C\prod_{j=1}^q|\hat{H}_j\circ f|^{\gamma_j}}{|W_\mathfrak{X}(f_0,\cdots,f_n)|}\sum_{Q\subseteq\{1,\cdots,q\},\ |Q|=n+1}
 \left|\Delta_\mathfrak{X}\big{(}\hat{H}_j\circ f, j\in Q\big{)}\right|.$$
 \end{proposition}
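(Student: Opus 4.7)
The plan is to first convert the desired estimate into a purely pointwise numerical inequality on
$$\beta_j(x) := \frac{\|f(x)\|\,\|\hat{H}_j\|}{|\hat{H}_j\circ f(x)|} \ge 1,$$
and then exploit $N$-subgeneral position together with Nochka's Lemma \ref{nochka1}. The first step uses an algebraic identity: for any $Q\subseteq\{1,\ldots,q\}$ with $|Q|=n+1$, letting $c_Q = \det(h_{jk})_{j\in Q,\ 0\le k\le n}$, multilinearity of $W_\mathfrak{X}$ gives $W_\mathfrak{X}(\hat{H}_{j}\circ f,\ j\in Q) = c_Q\,W_\mathfrak{X}(f_0,\ldots,f_n)$, and then the identity $W_\mathfrak{X} = \prod_{j\in Q}(\hat{H}_j\circ f)\cdot\Delta_\mathfrak{X}$ yields
$$\bigl|\Delta_\mathfrak{X}(\hat{H}_j\circ f,\ j\in Q)\bigr| = |c_Q|\,\frac{|W_\mathfrak{X}(f_0,\ldots,f_n)|}{\prod_{j\in Q}|\hat{H}_j\circ f|}.$$
Substituting into the claim and clearing the common factor $|W_\mathfrak{X}(f_0,\ldots,f_n)|$, the desired inequality reduces to the pointwise statement
$$\prod_{j=1}^q\beta_j^{\gamma_j}\ \le\ C'\sum_{|Q|=n+1}\prod_{j\in Q}\beta_j,$$
with $\|\hat{H}_j\|$ and $|c_Q|$ absorbed into $C'$ (only $Q$ of rank $n+1$ contribute on the right).

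For the pointwise bound, fix $x$ and reorder so that $\beta_{i_1}\ge \beta_{i_2}\ge\cdots\ge \beta_{i_q}$; put $Q_0=\{i_1,\ldots,i_{N+1}\}$. Because the hyperplanes are in $N$-subgeneral position, any $N+1$ of them span, so $\mathrm{rank}(Q_0)=n+1$. Consequently the hyperplanes indexed by $Q_0$ have no common zero, and since there are only finitely many such subsets $Q_0$, a compactness argument on the unit sphere in $\mathbb{C}^{n+1}$ produces a uniform constant $M>0$ with $\max_{j\in Q_0}|\hat{H}_j\circ f(x)|\ge M\|f(x)\|$, i.e.,
$$\beta_{i_{N+1}}(x) = \min_{j\in Q_0}\beta_j(x)\ \le\ 1/M.$$
By the ordering, $\beta_j(x)\le \beta_{i_{N+1}}(x)\le 1/M$ for every $j\notin Q_0$, whence
$$\prod_{j\notin Q_0}\beta_j^{\gamma_j}\ \le\ (1/M)^{\sum_{j\notin Q_0}\gamma_j} = O(1).$$
Hence $\prod_{j=1}^q\beta_j^{\gamma_j}\le O(1)\cdot\prod_{j\in Q_0}\beta_j^{\gamma_j}$.

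The third step invokes Lemma \ref{nochka1} with $Q=Q_0$ (of size $N+1$ and rank $n+1$): there exist $k_1,\ldots,k_{n+1}\in Q_0$ with $\mathrm{rank}\{k_1,\ldots,k_{n+1}\}=n+1$ such that
$$\prod_{j\in Q_0}\beta_j^{\gamma_j}\ \le\ \prod_{i=1}^{n+1}\beta_{k_i}.$$
Taking $Q^*=\{k_1,\ldots,k_{n+1}\}$, this last product is one summand of $\sum_{|Q|=n+1}\prod_{j\in Q}\beta_j$, completing the pointwise estimate and hence, via the algebraic identity of step one, the proposition.

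The main obstacle is the pointwise observation that $\beta_{i_{N+1}}$ is uniformly bounded above. This is where $N$-subgeneral position is used essentially: it guarantees $\mathrm{rank}(Q_0)=n+1$ for every possible top-$(N+1)$ subset $Q_0$, so a single compactness argument (continuity of $\max_{j\in Q_0}|\hat{H}_j(v)|/\|v\|$ on the projective unit sphere, minimized over the finitely many admissible $Q_0$) provides the uniform constant $M$. Without this input the sum on the right could be crushed by an unbounded factor $\beta_{i_{N+1}}^{\sum_{j\notin Q_0}\gamma_j}$; with it, Nochka's weighted inequality converts the residual product over $Q_0$ into a clean product of $n+1$ terms.
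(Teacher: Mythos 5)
Your proof is correct and follows essentially the same route as the paper's: both reduce via the Wronskian identities to a pointwise inequality in the ratios $\beta_j=\|f\|\|\hat H_j\|/|\hat H_j\circ f|$, use $N$-subgeneral position together with compactness of $\mathbb P^n(\mathbb C)$ to control everything outside an $(N+1)$-element subset of rank $n+1$, and then invoke Lemma~\ref{nochka1} on that subset to pass to a plain product over $n+1$ indices of full rank. The only cosmetic difference is that the paper phrases the compactness input as a uniform lower bound on $\sum_{|Q|=q-N-1}\prod_{j\in Q}\bigl(|\hat H_j(w)|/(\|\hat H_j\|\|w\|)\bigr)^{\gamma_j}$ and works with the complement $R$ of size $N+1$, whereas you order the $\beta_j$ and take the top $N+1$ explicitly — the same mechanism.
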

 \begin{proof} Note from the definition of $N$-subgeneral position that for each point $w\in\mathbb P^n(\mathbb C),$ there is a subset $Q\subseteq\{1,\cdots,q\}$ with $|Q|=q-N-1$ such that
 $\prod_{j\in Q}\hat{H}_j(w)\neq0.$ Hence, there is a constant $C_1>0$ such that
 $$C_1^{-1}<\sum_{|Q|=q-N-1}\prod_{j\in Q}\left(\frac{|\hat{H}_j(w)|}{\|\hat{H}_j\|\|w\|}\right)^{\gamma_j}<C_1, \ \ ^\forall w\in\mathbb P^n(\mathbb C).$$
 Set $R=\{1,\cdots,q\}\setminus Q$ and rewrite
  $$\prod_{j\in Q}\left(\frac{|\hat{H}_j(w)|}{\|\hat{H}_j\|\|w\|}\right)^{\gamma_j}=
  \prod_{j\in R}\left(\frac{\|\hat{H}_j\|\|w\|}{|\hat{H}_j(w)|}\right)^{\gamma_j}\cdot
  \frac{\prod_{j=1}^q|\hat{H}_j(w)|^{\gamma_j}}{\prod_{j=1}^q\left(\|\hat{H}_j\|\|w\|\right)^{\gamma_j}}.$$
 Since ${\rm{rank}}(R)=n+1,$  Lemma \ref{nochka} (ii) and Lemma \ref{nochka1} implies that there is a subset $R'\subseteq R$ with $|R'|=n+1$ such  that 
  \begin{eqnarray}\label{bzd}
 \prod_{j\in Q}\left(\frac{|\hat{H}_j(w)|}{\|\hat{H}_j\|\|w\|}\right)^{\gamma_j}&\leq&
 \prod_{j\in R'}\frac{\|\hat{H}_j\|\|w\|}{|\hat{H}_j(w)|}
 \cdot\frac{\prod_{j=1}^q|\hat{H}_j(w)|^{\gamma_j}}{\prod_{j=1}^q\|\hat{H}_j\|\|w\|^{\gamma(q-2N+n-1)+n+1}}  \nonumber \\
 &=& \frac{1}{\prod_{j\in R'}|\hat{H}_j(w)|}
 \cdot\frac{\prod_{j=1}^q|\hat{H}_j(w)|^{\gamma_j}}{\prod_{j=1}^q\|\hat{H}_j\|\cdot \|w\|^{\gamma(q-2N+n-1)}}. 
 \end{eqnarray}
 By the property of Wronskian determinant given before, we see that
 $$c(R'):=\frac{|W_\mathfrak{X}(f_0,\cdots,f_n)|}{|W_\mathfrak{X}\big{(}\widehat{H}_f\circ f, j\in R'\big{)}|}$$
 is a positive number depending on $R'$. Hence, it yields from (\ref{bzd}) that
 \begin{eqnarray*}
 &&\prod_{j\in Q}\left(\frac{|\hat{H}_j\circ f|}{\|\widehat{H}_j\|\|f\|}\right)^{\gamma_j} \\
 &\leq&\frac{c(R')}{\|f\|^{\gamma(q-2N+n-1)}\prod_{j=1}^q\|\hat{H}_j\|}\cdot
 \frac{\prod_{j=1}^q|\hat{H}_j\circ f|^{\gamma_j}}{|W_\mathfrak{X}(f_0,\cdots,f_n)|}
 \cdot\frac{|W_\mathfrak{X}\big{(}\hat{H}_f\circ f, j\in R'\big{)}|}{\prod_{j\in R'}|\hat{H}_j\circ f|}  \\
 &=&\frac{c(R')}{\|f\|^{\gamma(q-2N+n-1)}\prod_{j=1}^q\|\hat{H}_j\|}\cdot
 \frac{\prod_{j=1}^q|\hat{H}_j\circ f|^{\gamma_j}}{|W_\mathfrak{X}(f_0,\cdots,f_n)|}
 \cdot \left|\Delta_\mathfrak{X}\big{(}\hat{H}_f\circ f, j\in R'\big{)}\right|,
 \end{eqnarray*}
 where we use the relation between Wronskian determinant and logarithmic Wronskian determinant stated before. The proposition is proved by setting  $C=C_1\max_{R'}\{c(R')\}/\prod_{j=1}^q\|\hat{H}_j\|.$
 \end{proof}
 \begin{proposition}\label{nochka3} Assume the same notations as in Proposition $\ref{nochka2}.$ Then
   $$\sum_{j=1}^q\gamma_j(\hat{H}_j\circ f)-\big{(}W_\mathfrak{X}(f_0,\cdots,f_n)\big{)}\leq
   \sum_{j=1}^q\gamma_j\sum_{a\in S}\min\big{\{}{\rm{ord}}_a\hat{H}_j\circ f, n\big{\}}\cdot a$$
holds  as a divisor on  $S$ with rational coefficients.
 \end{proposition}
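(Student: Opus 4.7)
The inequality is pointwise on $S$, so I reduce to a statement at each point $a \in S$. Write $\mu_j = \mathrm{ord}_a(\hat H_j \circ f) \geq 0$; using the elementary identity $\mu_j - \min\{\mu_j, n\} = \max\{\mu_j - n, 0\}$, the desired inequality of divisors is equivalent to the pointwise estimate
\[
\sum_{j=1}^q \gamma_j \max\{\mu_j - n, 0\} \;\leq\; \mathrm{ord}_a\, W_\mathfrak{X}(f_0, \ldots, f_n)
\]
holding at every $a \in S$; summing over $a$ then restores the divisor inequality.

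For the Wronskian bound, I exploit the fact that $\mathfrak{X}$ is a nowhere-vanishing holomorphic vector field: the flow-box theorem supplies a local holomorphic coordinate $w$ with $w(a) = 0$ and $\mathfrak{X} = \partial/\partial w$. For any $R = \{j_1, \ldots, j_{n+1}\} \subseteq \{1, \ldots, q\}$ of rank $n+1$, the invertible linear map between $(f_0, \ldots, f_n)$ and $(\hat H_{j_1} \circ f, \ldots, \hat H_{j_{n+1}} \circ f)$, together with the transformation properties of the Wronskian listed just before the proposition, gives
\[
W_\mathfrak{X}(f_0, \ldots, f_n) = c(R)\, W_\mathfrak{X}(\hat H_{j_1} \circ f, \ldots, \hat H_{j_{n+1}} \circ f), \qquad c(R) \in \mathbb{C}^{\ast}.
\]
Writing $\hat H_{j_i} \circ f = w^{\mu_{j_i}} u_i(w)$ with $u_i(0) \neq 0$, the $i$-th column of the Wronskian matrix consists of $\partial_w^k(w^{\mu_{j_i}} u_i)$ for $k = 0, \ldots, n$, each vanishing at $w = 0$ to order at least $\max\{\mu_{j_i} - n, 0\}$. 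Factoring $w^{\max\{\mu_{j_i} - n, 0\}}$ out of column $i$ yields
\[
\mathrm{ord}_a\, W_\mathfrak{X}(f_0, \ldots, f_n) \;\geq\; \sum_{i=1}^{n+1} \max\{\mu_{j_i} - n, 0\}.
\]

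I now pick $R$ via the Nochka weights so that this lower bound dominates $\sum_j \gamma_j \max\{\mu_j - n, 0\}$. Let $Q = \{j : \mu_j \geq 1\}$ be the set of hyperplanes through $f(a)$; by $N$-subgeneral position any $N+1$ of the $H_j$'s have empty intersection, so $|Q| \leq N < N+1$ and Lemma \ref{nochka1} is applicable. Taking $\beta_j = \exp(\max\{\mu_j - n, 0\}) \geq 1$ and passing to logarithms in its multiplicative conclusion produces distinct $j_1, \ldots, j_{\mathrm{rank}(Q)} \in Q$ with rank $\mathrm{rank}(Q)$ satisfying
\[
\sum_{j \in Q} \gamma_j \max\{\mu_j - n, 0\} \;\leq\; \sum_{i=1}^{\mathrm{rank}(Q)} \max\{\mu_{j_i} - n, 0\}.
\]
Because the full family $\hat H_1, \ldots, \hat H_q$ has rank $n+1$ (any $N+1$ of them do by $N$-subgeneral position), I can extend this rank-$\mathrm{rank}(Q)$ set to an index set $R = \{j_1, \ldots, j_{n+1}\}$ of rank $n+1$ by adjoining indices from $\{1, \ldots, q\} \setminus Q$, which satisfy $\mu_{j_i} = 0$ and so add nothing. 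Feeding $R$ into the Wronskian bound yields the required pointwise inequality.

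The single delicate point is the pairing in the final step: the choice $\beta_j = \exp(\max\{\mu_j - n, 0\})$ is precisely what converts Nochka's multiplicative inequality into an additive one matching the column-factorization estimate for $W_\mathfrak{X}$. Everything else — the flow-box coordinates, the column-wise factoring, and the rank-$(n+1)$ extension of the initial subset of $Q$ — is routine and rests only on $N$-subgeneral position together with $q > 2N - n + 1$.
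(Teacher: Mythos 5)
Your proof is correct and reaches the same pointwise inequality, but by a noticeably different route from the paper. The paper follows Fujimoto: it works only with $E=\{j:\mathrm{ord}_a\hat H_j\circ f\geq n+1\}$, builds the filtration $\emptyset=E_0\subsetneq E_1\subseteq\cdots\subseteq E_t=E$ ordered by decreasing vanishing orders $m_1>\cdots>m_t$, runs an Abel summation using Lemma \ref{nochka} (iii) (the bound $\sum_{j\in Q}\gamma_j\leq\mathrm{rank}(Q)$), and then chooses a nested chain $F_1\subseteq\cdots\subseteq F_t$ of rank‑realizing subsets to match the telescoped sum against $\mathrm{ord}_a W_\mathfrak{X}$. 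You instead invoke Lemma \ref{nochka1} directly, with the clever substitution $\beta_j=\exp\big((\mu_j-n)^+\big)$ converting its multiplicative conclusion into exactly the additive estimate you need; that single application of Lemma \ref{nochka1} encapsulates the entire Abel‑summation step, so your argument is shorter and avoids the filtration machinery. You also make the Wronskian lower bound fully explicit via flow‑box coordinates and column‑wise factoring of $w^{(\mu_{j_i}-n)^+}$, where the paper leaves it as "a simple computation." Your extension of the rank‑$\mathrm{rank}(Q)$ set to rank $n+1$ using indices outside $Q$ (which contribute zero) is a minor but necessary bookkeeping point that you handle correctly, and your observation $|Q|\leq N$ from $N$‑subgeneral position is exactly what makes Lemma \ref{nochka1} applicable. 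Net effect: same substance, but you delegate the combinatorial core to Lemma \ref{nochka1} rather than reproving it, at the small cost of having to notice the $\beta_j=\exp(\cdot)$ trick.
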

 \begin{proof}  The proof essentially follows Fujimoto \cite{Fujimoto}.
 We obverse that
 $${\rm{ord}}_a\hat{H}_j\circ f=\min\big{\{}{\rm{ord}}_a\hat{H}_j\circ f,n\big{\}}+
 \big{(}{\rm{ord}}_a\hat{H}_j\circ f-n\big{)}^+.$$
 Hence, the inequality claimed in the proposition is equivalent to
 \begin{equation}\label{claim}
   \sum_{j=1}^q\gamma_j\sum_{a\in S}\big{(}{\rm{ord}}_a\hat{H}_j\circ f-n\big{)}^+\cdot a\leq\big{(}W_\mathfrak{X}(f_0,\cdots,f_n)\big{)}.
 \end{equation}
 In what follows, we show (\ref{claim}) holds. Take an arbitrary point $a\in S$ and put
 $$E=\big{\{}j\in\{1,\cdots,n\}: {\rm{ord}}_a\hat{H}_j\circ f\geq n+1\big{\}}.$$
 The assumption of $N$-subgeneral position implies $|E|\leq N.$ We may assume that $E\neq\emptyset.$ Let 
 $m_1>\cdots>m_t\geq n+1$ be the orders ${\rm ord}_a\hat{H}_j\circ f, \ j \in E$ in order from the largest to the smallest.  Take a sequence of subsets of $E$
 $$\emptyset=E_0\neq E_1\subseteq E_2\subseteq\cdots\subseteq E_t=E$$
 so that ${\rm{ord}}_a\hat{H}_j\circ f=m_k$ for all $j\in E_k\setminus E_{k-1}.$ For each $E_k,$ we take a subset $F_k\subseteq E_k$ such that
 $|F_k|={\rm{rank}}(F_k)={\rm{rank}}(E_{k})$ and $F_{k-1}\subseteq F_k.$ Whence, it yields that $|F_k\setminus F_{k-1}|={\rm{rank}}(E_k)-{\rm{rank}}(E_{k-1}).$
 Set  $m_k'=m_k-n.$ Then
$$\sum_{j=1}^q\gamma_j\big{(}{\rm{ord}}_a\hat{H}_j\circ f-n\big{)}^+ = \sum_{j\in E}\gamma_j\big{(}{\rm{ord}}_a\hat{H}_j\circ f-n\big{)}= \sum_{k=1}^t\sum_{j\in E_k\setminus E_{k-1}}\gamma_jm'_k.$$
For the last term,  by (iii) in Lemma \ref{nochka} we get 
\begin{eqnarray*}
\sum_{k=1}^t\sum_{j \in E_k\setminus E_{k-1}}\gamma_jm'_k
&=& (m'_1-m'_2)\sum_{j\in E_1}\gamma_j+\cdots+m'_t\sum_{j\in E_t}\gamma_j \\
&\leq& (m'_1-m'_2){\rm{rank}}(E_{1})+\cdots+m'_t{\rm{rank}}(E_{t}) \\
&=& {\rm{rank}}(E_1)m'_1+\cdots+
 \big{(}{\rm{rank}}(E_t)-{\rm{rank}}(E_{t-1})\big{)}m'_t \\
 &=& |F_1|m_1'+|F_2\setminus F_1|m'_2+\cdots+|F_t\setminus F_{t-1}|m'_t.
\end{eqnarray*}
Hence, 
$$\sum_{j=1}^q\gamma_j\big{(}{\rm{ord}}_a\hat{H}_j\circ f-n\big{)}^+\leq|F_1|m_1'+|F_2\setminus F_1|m'_2+\cdots+|F_t\setminus F_{t-1}|m'_t.$$
On the other hand, since $\mathfrak X$ is  vanishing nowhere,
the order of $W_\mathfrak{X}(f_0,\cdots,f_n)$ at $a$ does not change by a linear reversible transformation of $f_0,\cdots,f_n.$ Put
$F_t=\{j_0,\cdots,j_k\},$ we  may assume that  $f_0=\hat{H}_{j_0}\circ f,\cdots, f_k=\hat{H}_{j_k}\circ f$ without loss of generality.  
A simple computation for Wronskian determinant gives
$${\rm{ord}}_aW_\mathfrak{X}(f_0,\cdots,f_n)\geq |F_1|m_1'+|F_2\setminus F_1|m'_2+\cdots+|F_t\setminus F_{t-1}|m'_t.$$
Therefore, (\ref{claim}) is confirmed.
 \end{proof}

\subsection{Proof of Theorem \ref{thm}}~

 Let  $\pi:\tilde{S}\rightarrow S$ be the  (analytic) universal covering. By the pull-back of $\pi,$ $\tilde{S}$ can be equipped with the induced metric
 from the
metric of $S.$ In such case, $\tilde{S}$ is a simply-connected and complete open Riemann surface of non-positive Gauss curvature.
Take a diffusion process $\tilde{X}_t$ in $\tilde{S}$  so that $X_t=\pi(\tilde{X}_t),$ then
$\tilde{X}_t$ becomes a Brownian motion with generator  $\Delta_{\tilde{S}}/2$ which is induced from the pull-back metric.
Let $\tilde{X}_t$ start from $\tilde{o}\in\tilde{S}$  with $o=\pi(\tilde{o}),$ then we have
$$\mathbb E_o[\phi(X_t)]=\mathbb E_{\tilde{o}}\big{[}\phi\circ\pi(\tilde{X}_t)\big{]}$$
for $\phi\in \mathscr{C}_{\flat}(S).$ Set $$\tilde{\tau}_r=\inf\big{\{}t>0: \tilde{X}_t\not\in \tilde{D}(r)\big{\}},$$ where
$\tilde{D}(r)$ is a geodesic disc centered at $\tilde{o}$ with radius $r$ in $\tilde{S}.$
 If necessary, one can extend the filtration in probability space where $(X_t,\mathbb P_o)$ are defined so that $\tilde{\tau}_r$ is a stopping time with
 respect to a filtration where the stochastic calculus of $X_t$ works.
By the above arguments, we could assume $S$ is simply connected by lifting $f$ to the covering.
 \begin{lemma}\label{ldl3} Let $Q\subseteq\{1,\cdots,q\}$ with $|Q|=n+1.$ If $S$ is simply connected, then we have
 \begin{eqnarray*}
 m\Big(r,\Delta_\mathfrak{X}\big{(}\hat{H}_k\circ f, k\in Q\big{)}\Big)
 &\leq& O\Big(\log T_f(r)-\kappa(r)r^2+\log^+\log r\Big)  \big{\|},
 \end{eqnarray*}
where $\kappa$ is defined by $(\ref{kappa}).$ In particular, if $S$ is the Poincar\'e disc,  then 
 \begin{eqnarray*}
 m\Big(r,\Delta_\mathfrak{X}\big{(}\hat{H}_k\circ f, k\in Q\big{)}\Big)
 &\leq& O\Big(\log T_f(r)+r\Big)  \big{\|}.
 \end{eqnarray*}
 \end{lemma}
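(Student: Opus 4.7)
The plan is to expand the log-Wronskian determinant as a sum over permutations and bound the proximity function of each resulting product of logarithmic derivatives by the Logarithmic Derivative Lemma (Theorem \ref{ldl2}), then reduce the characteristic functions of the resulting auxiliary meromorphic functions to $T_f(r)$ via Proposition \ref{v000}.

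First, since $f$ is linearly non-degenerate, some component $f_\ell\not\equiv 0$; fix such $\ell$ and set $g_k := (\hat{H}_k\circ f)/f_\ell$ for $k\in Q$. These are globally-defined meromorphic functions on $S$. By the invariance property $\Delta_\mathfrak{X}(\phi f_0,\ldots,\phi f_n)=\Delta_\mathfrak{X}(f_0,\ldots,f_n)$ recorded earlier,
$$\Delta_\mathfrak{X}\bigl(\hat{H}_k\circ f,\,k\in Q\bigr)=\Delta_\mathfrak{X}\bigl(g_k,\,k\in Q\bigr),$$
so the entries of the $(n+1)\times(n+1)$ determinant are logarithmic derivatives $\mathfrak{X}^{j}(g_k)/g_k$ of meromorphic functions on $S$.

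Expanding the determinant over the symmetric group and using the pointwise inequality $\log^+|\sum_\sigma a_\sigma|\le \sum_\sigma \log^+|a_\sigma|+\log(n+1)!$, integration against $d\pi_o^r$ gives
$$m\Bigl(r,\Delta_\mathfrak{X}(g_k,k\in Q)\Bigr)\le \sum_{k\in Q}\sum_{j=0}^{n}m\Bigl(r,\tfrac{\mathfrak{X}^{j}(g_k)}{g_k}\Bigr)+O(1).$$
Because $S$ is simply connected, Theorem \ref{ldl2} applies directly to each meromorphic $g_k$ and each $0\le j\le n$, yielding
$$m\Bigl(r,\tfrac{\mathfrak{X}^{j}(g_k)}{g_k}\Bigr)\le \tfrac{3j}{2}\log T(r,g_k)+O\bigl(\log^+\log T(r,g_k)-\kappa(r)r^2+\log^+\log r\bigr)\big\|.$$

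It remains to replace $T(r,g_k)$ by $T_f(r)$. Since $g_k=\sum_{i=0}^n h_{ki}(f_i/f_\ell)$ is a $\mathbb{C}$-linear combination of the ratios $f_i/f_\ell$, sub-additivity of $T(r,\cdot)$ together with Proposition \ref{v000} gives $T(r,g_k)\le (n+1)T_f(r)+O(1)$, so $\log T(r,g_k)=\log T_f(r)+O(1)$ and $\log^+\log T(r,g_k)=O(\log T_f(r))$. Summing the finitely many contributions (the numbers $n$ and $|Q|=n+1$ are fixed by the statement), all $\tfrac{3j}{2}\log T(r,g_k)$ factors collapse into a single $O(\log T_f(r))$, yielding the claimed bound. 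The Poincar\'e-disc assertion is obtained verbatim from the second (sharper) conclusion of Theorem \ref{ldl2}, which replaces $-\kappa(r)r^2$ by $r$.

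The work is essentially bookkeeping rather than a conceptual obstacle; the only point requiring care is ensuring that $(n+1)^2$-many logarithmic-derivative contributions, each with its own $\log T(r,g_k)$ term and its own exceptional set of finite Lebesgue measure, all aggregate into a single $O(\log T_f(r))$ error modulo an exceptional set of finite measure — which works precisely because $n$ and $|Q|$ are constants independent of $r$, and the union of finitely many sets of finite Lebesgue measure has finite Lebesgue measure.
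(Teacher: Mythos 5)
Your proof is correct and follows essentially the same route as the paper: both arguments factor out a nonvanishing scalar from the columns using the invariance $\Delta_\mathfrak{X}(\phi f_0,\ldots,\phi f_n)=\Delta_\mathfrak{X}(f_0,\ldots,f_n)$ so that the entries become logarithmic derivatives of globally meromorphic functions, then apply Theorem~\ref{ldl2} entrywise and reduce to $T_f(r)$ via Proposition~\ref{v000}. The only cosmetic difference is that the paper divides by $\hat{H}_{j_0}\circ f$ (producing $\Delta_\mathfrak{X}(1,\hat{H}_{j_1}\circ f/\hat{H}_{j_0}\circ f,\ldots)$) whereas you divide by a nonzero component $f_\ell$; you also spell out the permutation expansion and the handling of the exceptional sets, which the paper leaves implicit.
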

 \begin{proof} We write $Q=\{j_0,\cdots,j_n\}$ and suppose that $\hat{H}_{j_0}\circ f\not\equiv0$ without loss of generality.
 The property of logarithmic Wronskian determinant indicates
 $$\Delta_\mathfrak{X}\big{(}\hat{H}_{j_0}\circ f,\cdots, \hat{H}_{j_n}\circ f\big{)}=
 \Delta_\mathfrak{X}\left(1,\frac{\hat{H}_{j_1}\circ f}{\hat{H}_{j_0}\circ f}, \cdots, \frac{\hat{H}_{j_n}\circ f}{\hat{H}_{j_0}\circ f}\right).$$
 Since $\hat{H}_{j_0}\circ f,\cdots,\hat{H}_{j_n}\circ f$ are linear forms of $f_0,\cdots, f_n,$ by Theorem \ref{ldl2} and
 Proposition
 \ref{v000} we have
 $$ m\Big(r,\Delta_\mathfrak{X}\big{(}\hat{H}_k\circ f, k\in Q\big{)}\Big) \\
  \leq O\Big(\log T_f(r)-\kappa(r)r^2+\log^+\log r\Big).$$
 We prove the first assertion, and the second assertion is proved similarly   by replacing $\kappa(r)r^2$ by $-r.$
 \end{proof}

We now prove Theorem \ref{thm}:
 \begin{proof} By Proposition \ref{nochka2} and Dynkin formula
 \begin{eqnarray*}
 &&\gamma(q-2N+n-1)dd^c\log\|f\|^2
 -\sum_{j=1}^q\gamma_j\big{(}\hat{H}_j\circ f\big{)}+\big{(}W_\mathfrak{X}(f_0,\cdots,f_n)\big{)} \\
 &\leq&
2dd^c
 \Big{[}\log\sum_{Q\subseteq\{1,\cdots,q\},\ |Q|=n+1}
 \left|\Delta_\mathfrak{X}\big{(}\hat{H}_k\circ f, k\in Q\big{)}\right|\Big{]}
 \end{eqnarray*}
 in the sense of currents.  Integrating both sides of the above inequality, then it yields from Proposition \ref{nochka3} that 
 \begin{eqnarray*}
 &&(q-2N+n-1)T_f(r)- \sum_{j=1}^qN^{[n]}_f(r,H_j) \\
 &\leq&
 \frac{2\pi}{\gamma}\int_{D(r)}g_r(o,x)dd^c
 \Big{[}\log\sum_{Q\subseteq\{1,\cdots,q\},\ |Q|=n+1}
 \left|\Delta_\mathfrak{X}\big{(}\hat{H}_k\circ f, k\in Q\big{)}\right|\Big{]} \\
 &=& \frac{1}{2\gamma}\int_{D(r)}g_r(o,x)\Delta_S\log\sum_{Q\subseteq\{1,\cdots,q\},\ |Q|=n+1}
 \left|\Delta_\mathfrak{X}\big{(}\hat{H}_k\circ f, k\in Q\big{)}\right|dV(x).
 \end{eqnarray*}
 Applying co-area formula and Dynkin formula to the last term, we get
 \begin{eqnarray*}
 && \frac{1}{2}\int_{D(r)}g_r(o,x)\Delta_S\log\sum_{Q\subseteq\{1,\cdots,q\},\ |Q|=n+1}
 \left|\Delta_\mathfrak{X}\big{(}\hat{H}_k\circ f, k\in Q\big{)}\right|dV(x) \\
 &=& \int_{\partial D(r)}\log\sum_{Q\subseteq\{1,\cdots,q\},\ |Q|=n+1}
 \left|\Delta_\mathfrak{X}\big{(}\hat{H}_k\circ f, k\in Q\big{)}\right|d\pi_o^r(x)+O(1) \\
 &\leq& \sum_{Q\subseteq\{1,\cdots,q\},\ |Q|=n+1}m\left(r,\Delta_\mathfrak{X}\big{(}\hat{H}_k\circ f, k\in Q\big{)}\right)+O(1) \\
 &\leq& O\Big(\log T(r,\psi)-\kappa(r)r^2+\log^+\log r\Big).
 \end{eqnarray*}
 The last step follows from Lemma \ref{ldl3}. This proves  the  theorem. 
 \end{proof}
 
Corollary \ref{ccc1} can be  proved similarly  just
 by replacing $\kappa(r)r^2$ by $-r,$ due to the second conclusion of Lemma \ref{ldl3}. 
 
  Let $H$ be a hyperplane of $\mathbb P^n(\mathbb C)$ such that $H\not\supseteq f(S).$  The \emph{$k$-defect} $\delta^{[k]}_f(H)$ of $f$
 at $k$-level with respect to $H$ is defined by
$$\delta^{[k]}_f(H)=1-\limsup_{r\rightarrow\infty}\frac{N^{[k]}_f(r,H)}{T_f(r)},$$
where $N^{[k]}_f(r,H)$ is the  $k$-truncated counting function. By  definition, we can derive Corollary \ref{defect1}  immediately.

\section{Vanishing theorem}
\subsection{LDL for logarithmic jet differentials}~

Let $S$ be an open Riemann surface with a nowhere-vanishing holomorphic vector field $\mathfrak{X}.$   Let $M$ be  a complex   manifold of complex dimension $n.$ First, we  introduce jet bundles.

Fix a point $x_0\in S.$  Now consider a holomorphic curve $f:U_1\rightarrow M$ defined in an open neighborhood $U_1$ of $x_0$ with $f(x_0)=y,$ and
 another holomorphic curve $g:U_2\rightarrow M$ defined in an open neighborhood $U_2$ of $x_0$ with $g(x_0)=y,$
we define an equivalent relation $f\thicksim g$
if there exists an open neighborhood
 $U_3\subseteq U_1\cap U_2$ of $x_0$ such that $f= g$ on $U_3.$
Denote by  ${\rm{Hol}}_{loc}((S,x_0),(M,y))$ the set of  such equivalent classes.
 Locally, we  write  $f=(f_1,\cdots,f_n),$  where $f_j=\zeta_j\circ f$  near $x_0,$ here $(\zeta_1,\cdots,\zeta_n)$ is a local holomorphic coordinate system near $y.$ 
 Put
$$\mathfrak{X}^j(f)=\big{(}\cdots,\mathfrak{X}^j(f_i),\cdots\big{)}.$$
For $f,g\in{\rm{Hol}}_{loc}((S,x_0),(M,y)),$ we speak of $f\thicksim^{k}g$ with respect to $\mathfrak{X}$ if 
 $$\mathfrak{X}^jf(x_0)=\mathfrak{X}^jg(x_0), \ \ 1\leq j\leq k.$$
This relation is independent of the choices of  local holomorphic coordinates,  and defines  an equivalent relation.
Let  $j_{k,\mathfrak{X}}f$  be this equivalent class. Set
\begin{eqnarray*}
J_k(M,\mathfrak{X})_y&=&\Big{\{}j_{k,\mathfrak{X}}f: \ f\in{\rm{Hol}}_{loc}\big((S,x_0),(M,y)\big)\Big{\}},\\
  J_k(M,\mathfrak{X})&=&\bigsqcup_{y\in M} J_k(M,\mathfrak{X})_y.
\end{eqnarray*}
Apparently,  $J_k(M,\mathfrak{X})_y\cong\mathbb C^{nk},$ i.e., each  $v\in J_k(M,\mathfrak{X})_y$  is represented by
$$\Big{(}\mathfrak{X}^j(\zeta_i\circ\phi)(x_0): \ 1\leq i\leq n, \ 1\leq j\leq k\Big{)}$$
for some $\phi\in {\rm{Hol}}_{loc}((S,x_0),(M,y)).$ 
Write $\mathfrak X=a\frac{\partial}{\partial z}$ locally,  then $\mathfrak X$ induces a global holomorphic 1-form $\eta:=dz/a$ without zeros on $S.$  For every $x\in S,$ there exists an open neighborhood $U(x)$ of $x$ such that 
\begin{equation}\label{inte}
 \int_\gamma \eta=0
 \end{equation}
  for any  closed Jordan curve $\gamma$ in $U(x).$ In fact, 
it is known \cite{GR},  there exists a nowhere-vanishing holomorphic 1-form $\eta$ on $S$ such that
the contour integral (\ref{inte})  equals  0 along any closed Jordan curve $\gamma$ in $S.$
 We can choose $\mathfrak X$ induced from $\eta.$
For $x\in S,$ define
$$\widehat{x-x_0}:=\int_{x_0}^x\eta,$$
which is independent of the paths of integration, and hence is a holomorphic function of $x.$ 
 If $f$ is defined in an open neighborhood of another point $x_1\in S,$ 
then we  consider the  power series of $\widehat{x-x_0}$ that 
$$f_{x_1}(x):=f(x_1)+\mathfrak{X}f(x_1)\cdot\widehat{x-x_0}+\frac{\mathfrak{X}^2f(x_1)}{2!}
\cdot\widehat{x-x_0}^2+\cdots$$
which  is convergent in an open  neighborhood  of $x_0,$ and  holomorphic on this neighborhood.
It is trivial to check  that 
$$\mathfrak X^k f_{x_1}(x_0)=\mathfrak X^kf(x_1), \ \ k=0,1,\cdots$$
Hence,  $f$ gives a $k$-jet $j_{k,\mathfrak X}f_{x_1}(x_0)$ in $J_k(M,\mathfrak{X})_{f(x_0)}$ at $x_1.$ In further,  $f$ induces naturally 
$$J_{k,\mathfrak{X}}f:  x\mapsto j_{k,\mathfrak{X}}f_x(x_0)\in J_k(M,\mathfrak{X})_{f(x)}$$
for $x$ in some open neighborhood of  $x_1,$ which is called the \emph{$k$-jet lift} of $f.$

For every  holomorphic curve $f:S\to M,$    $f$ defines   a $k$-jet in $J_k(M,\mathfrak{X})_{f(x)}$ at every  $x\in S$ in a natural way,  i.e.,
$$J_{k,\mathfrak{X}}f(x)=\Big{(}\mathfrak{X}^j(\zeta_i\circ f)(x): \ 1\leq i\leq n, \ 1\leq j\leq k\Big{)}, \ \  ^\forall x\in S.$$
 \ \ \ \  We equip a complex structure to make $\pi_k:J_k(M,\mathfrak{X})\rightarrow M$ be a $\mathbb{C}^{nk}$-fiber bundle over $M,$
  where $\pi_k$ is the natural projection, and so $J_k(M,\mathfrak{X})$ becomes a complex manifold. 
 Note that  $J_1(M,\mathfrak{X})\cong T_M$ (holomorphic tangent bundle over $M$), but, in general,    $J_k(M,\mathfrak{X})$ is not a vector bundle for $k>1$.

A (holomorphic) jet differential $\omega$ of order $k$ and weighted degree $m$  on  an open set of $M$ (with  a local holomorphic coordinate  system $\zeta=(\zeta_1,\cdots,\zeta_n)$)
is   a homogeneous polynomial 
in $d^i\zeta_j$ $(1\leq i\leq k, 1\leq j\leq n)$
 of the form
$$\omega=\sum_{|l_1|+\cdots+k|l_k|=m}a_{l_1\cdots l_k}(\zeta)d\zeta^{l_1}\cdots d^k\zeta^{l_k}$$
with holomorphic function coefficients $a_{l_1\cdots l_k}(\zeta),$  
which is also simply  called   a $k$-jet differential of degree $m.$
We use  $E_{k,m}^{GG}T^*_M$ to  denote the sheaf of germs of $k$-jet differentials of degree $m.$

Let $D$ be a reduced divisor on $M.$ Then 
a logarithmic $k$-jet differential $\omega$ of degree $m$ along $D$ is  a $k$-jet differential of degree $m$ with  possible logarithmic poles along $D.$
Namely, along  $D,$ $\omega$ is locally  a homogeneous polynomial  in
$$d^s\log \sigma_1, \cdots,d^s\log\sigma_r,d^s\sigma_{r+1},\cdots,d^s\sigma_n, \ \ 1\leq s\leq k$$
of weighted degree $m,$ where $\sigma_1,\cdots,\sigma_r$ are irreducible, and $\sigma_1\cdots\sigma_r=0$ is a local defining equation of $D.$
 Let $E_{k,m}^{GG}T^*_M(\log D)$  denote the sheaf of germs of logarithmic $k$-jet differentials of degree $m$ over $M.$
 We note  that  $\omega(J_{k,\mathfrak{X}}f)$ is a meromorphic function on $S$ for $\omega\in H^0(M, E_{k,m}^{GG}T^*_M(\log D)).$ 

Suppose that  $M$ is a compact complex manifold with an 
ample divisor $A$ on $M.$  
In what follows we  extend Theorem \ref{ldl2} to logarithmic jet differentials.

\begin{theorem}[LDL for logarithmic  jet differentials]\label{xxx} Let $f:S\rightarrow M$ be a holomorphic curve
such that $f(S)\not\subseteq D,$ where
 $D$ is a reduced divisor  on $M.$   Then
$$m\big{(}r,\omega(J_{k,\mathfrak{X}}f)\big{)}\leq O\Big{(}\log T_{f,A}(r)-\kappa(r)r^2+\log^+\log r\Big{)} \big{\|}$$
for  $\omega\in H^0(M, E_{k,m}^{GG}T^*_M(\log D)),$ where $\kappa$ is defined by $(\ref{kappa}).$ In particular, if $S$ is the Poincar\'e disc,  then 
$$m\big{(}r,\omega(J_{k,\mathfrak{X}}f)\big{)}\leq O\Big{(}\log T_{f,A}(r)+r\Big{)} \big{\|}$$
for  $\omega\in H^0(M, E_{k,m}^{GG}T^*_M(\log D)).$
\end{theorem}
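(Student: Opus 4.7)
The plan is to reduce the jet-differential estimate to the scalar Logarithmic Derivative Lemma (Theorem \ref{ldl2}) by locally expanding $\omega(J_{k,\mathfrak{X}}f)$ in terms of logarithmic derivatives of meromorphic functions on $S$ that come from rational functions on $M$. Since $A$ is ample, after replacing $A$ by a sufficiently high multiple I would fix a basis $s_0,\dots,s_N \in H^0(M,\Ocal(A))$ giving a projective embedding of $M$, and form $F=[s_0\circ f:\cdots:s_N\circ f]\colon S\to\PP^N$. Any rational function $\psi$ on $M$ of bounded degree relative to $A$ then pulls back to a meromorphic function on $S$ satisfying $T(r,\psi\circ f)\le C\cdot T_{f,A}(r)+O(1)$, by Proposition \ref{v000} applied to $F$ together with the First Main Theorem.

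Next I would fix a finite open cover $\{U_\alpha\}$ of $M$ on which $D$ has irreducible local defining functions $\sigma_1^{(\alpha)},\dots,\sigma_{r_\alpha}^{(\alpha)}$ and holomorphic coordinates $\zeta_1^{(\alpha)},\dots,\zeta_n^{(\alpha)}$. Up to bounded transition factors (continuous on the compact manifold $M$), these local functions can be realized as restrictions of rational functions in the $s_j$'s; thus the compositions $\sigma_i^{(\alpha)}\circ f$ and $\zeta_i^{(\alpha)}\circ f$ extend to global meromorphic functions on $S$ with characteristic bounded by $C\cdot T_{f,A}(r)+O(1)$. On $U_\alpha$, the jet differential $\omega$ is a weighted polynomial in the symbols $d^s\log\sigma_i^{(\alpha)}$, $d^s\sigma_j^{(\alpha)}$, and $d^s\zeta_i^{(\alpha)}$ with $1\le s\le k$, which evaluated on the jet lift $J_{k,\mathfrak{X}}f$ become the derivatives $\mathfrak{X}^s(\log\sigma_i^{(\alpha)}\circ f)$, $\mathfrak{X}^s(\sigma_j^{(\alpha)}\circ f)$, and $\mathfrak{X}^s(\zeta_i^{(\alpha)}\circ f)$.

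By the Fa\`a di Bruno formula, $\mathfrak{X}^s(\log\psi)$ is a universal polynomial in the logarithmic derivatives $\mathfrak{X}^l\psi/\psi$ for $l\le s$, and trivially $\mathfrak{X}^s\psi=\psi\cdot(\mathfrak{X}^s\psi/\psi)$. Hence $\omega(J_{k,\mathfrak{X}}f)$ is locally a polynomial in the finitely many symbols $\mathfrak{X}^l\psi/\psi$ (with $\psi$ among the meromorphic functions produced above and $l\le k$) multiplied by monomials in the $\psi$'s themselves. Using $\log^+|ab|\le\log^+|a|+\log^+|b|$ and $\log^+|a+b|\le\log^+|a|+\log^+|b|+\log 2$, the proximity $m(r,\omega(J_{k,\mathfrak{X}}f))$ splits into a finite sum of $m(r,\mathfrak{X}^l\psi/\psi)$ and $m(r,\psi)$ terms. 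Applying Theorem \ref{ldl2} to each of the former and the trivial bound $m(r,\psi)\le T(r,\psi)+O(1)\le C\cdot T_{f,A}(r)+O(1)$ to each of the latter, all contributions are absorbed into the target error $O(\log T_{f,A}(r)-\kappa(r)r^2+\log^+\log r)$. The Poincar\'e disc refinement follows identically by invoking the second conclusion of Theorem \ref{ldl2} in place of the first.

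The main obstacle I foresee is the chart-patching step: as $x$ varies over $\partial D(r)$, the image $f(x)$ may lie in different $U_\alpha$, and the local expressions for $\omega$ are related by transition cocycles on the overlaps. This is handled by partitioning $\partial D(r)$ according to $f^{-1}(U_\alpha)$ and estimating the proximity integral chart by chart; since the cover is finite and $M$ is compact, the transition factors are uniformly bounded and contribute only $O(1)$ to the proximity. Combined with the observation that only logarithmic derivatives of order at most $k$ ever appear in the Fa\`a di Bruno expansion, all implicit constants remain uniform and the bound is of the asserted form.
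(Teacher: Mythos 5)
Your overall plan---reduce the jet-differential proximity to scalar logarithmic-derivative estimates by working in a finite atlas, expanding $\omega$ in $d^s\log\sigma$ and $d^s\zeta$ symbols, and invoking Theorem~\ref{ldl2}---is the same skeleton as the paper's. But there is a genuine, fatal gap in how you treat the \emph{non-logarithmic} derivative terms $\mathfrak{X}^s(\zeta_i\circ f)$ (or, in the paper's notation, $\mathfrak{X}^h(w_{\lambda l}\circ f)$). You factor $\mathfrak{X}^s\psi=\psi\cdot(\mathfrak{X}^s\psi/\psi)$ and then claim that the resulting contribution $m(r,\psi)\le C\,T_{f,A}(r)+O(1)$ can be ``absorbed into the target error $O(\log T_{f,A}(r)-\kappa(r)r^2+\log^+\log r)$.'' This is false: $T_{f,A}(r)$ is not $O(\log T_{f,A}(r))$ unless the characteristic is bounded, nor is there any a priori dominance by $-\kappa(r)r^2$ (the applications in Theorems~\ref{thm2} and~\ref{thm4} are precisely in the regime where $\kappa(r)r^2=o(T_{f,A}(r))$). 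With your estimate the conclusion degenerates to $m(r,\omega(J_{k,\mathfrak{X}}f))=O(T_{f,A}(r))$, which is useless for the vanishing theorem.

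The paper avoids this by never detaching the cutoff factor. It keeps $c_\lambda\circ f$ glued to $\mathfrak{X}^h(w_{\lambda l}\circ f)$ in the partition-of-unity expansion, uses the pointwise geometric inequality
$c_\lambda^2\circ f\cdot\tfrac{\sqrt{-1}}{2\pi}dw_{\lambda l}\circ f\wedge d\overline{w}_{\lambda l}\circ f\le B\,f^*c_1(L_A,h)$
(valid because $c_\lambda$ vanishes where $w_{\lambda l}$ blows up and $M$ is compact), so that $c_\lambda^2\circ f\,|\mathfrak{X}(w_{\lambda l}\circ f)|^2\le B\,\xi\,\|\mathfrak{X}\|^2$ with $f^*c_1(L_A,h)=\xi\alpha$. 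Then the Calculus Lemma (Theorem~\ref{cal}) controls $\mathbb{E}_o[\log^+\xi(X_{\tau_r})]$ by $\log T_{f,A}(r)$ plus small errors, and the $\|\mathfrak{X}\|^2$ factor supplies the $-\kappa(r)r^2$ via (\ref{ok}) and Proposition~\ref{yyyy}; higher $h$ is handled by induction using Proposition~\ref{hello1}. You mention ``partitioning $\partial D(r)$ according to $f^{-1}(U_\alpha)$'' at the end, and a version of that idea \emph{could} repair the gap (on a shrunken $V_\alpha\subset\subset U_\alpha$, $\psi\circ f$ is bounded, so the contribution of $\log^+|\psi\circ f|$ over $\partial D(r)\cap f^{-1}(V_\alpha)$ is $O(1)$), but you do not carry this out; in your actual estimate you integrate the proximity of $\psi$ over all of $\partial D(r)$ and hence fall back on the too-weak global bound. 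You also omit the desingularization step (Hironaka) that reduces to the simple-normal-crossing case, and your assertion that the local coordinate patching contributes only ``bounded transition factors'' is not justified as stated (chart transitions are biholomorphisms, not bounded multipliers). The core missing idea is the retention of $c_\lambda$ together with the link $|\mathfrak{X}(w_{\lambda l}\circ f)|^2\lesssim \xi\|\mathfrak{X}\|^2$ and the Calculus Lemma; without it the proof does not close.
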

\begin{proof}
We  can regard $M$ as an algebraic submanifold  of a complex projective space.  By  Hironaka's desingularization  \cite{Hir},  there exists a
brow-up
$\tau:\tilde{M}\rightarrow M$
with center at  singular locus of $D$  
such that $f^{-1}(D)$ is of  simple  normal crossing type. 
Let a holomorphic curve $\tilde{f}:S\rightarrow\tilde{M}$ such that  
  $\tau\circ \tilde{f}=f.$ Since  $\omega(J_{k,\mathfrak{X}}f)=\tau^*\omega(J_{k,\mathfrak{X}}\tilde{f}),$
   it  suffices to certify this  assertion for $\tau^*\omega(J_{k,\mathfrak{X}}\tilde{f})$ on $\tilde{M}.$
   So, we  assume that $D$ has only simple normal crossings.
By  Ru-Wong's arguments (\cite{ru}, Page 231-233),
there exists a finite  open covering $\{U_\lambda\}$ of $M$ and  rational functions
$w_{\lambda1},\cdots,w_{\lambda n}$ on $M$ for each $\lambda,$ such that $w_{\lambda1},\cdots,w_{\lambda n}$ are holomorphic on $U_\lambda$ as well as
\begin{eqnarray*}
  dw_{\lambda1}\wedge\cdots\wedge dw_{\lambda n}(x)\neq0, & & \ ^\forall x\in U_{\lambda}, \\
  D\cap U_{\lambda}=\big{\{}w_{\lambda1}\cdots w_{\lambda \alpha_\lambda}=0\big{\}}, && \ ^\exists \alpha_{\lambda}\leq n.
\end{eqnarray*}
Hence, for each $\lambda$ we get
$$\omega|_{U_{\lambda}}=P_\lambda\Big(\frac{d^iw_{\lambda j}}{w_{\lambda j}}, \ d^hw_{\lambda l}\Big),
\ \  1\leq i,h\leq k, \ 1\leq j\leq \alpha_\lambda, \ \alpha_\lambda+1\leq l\leq n,$$
which is a polynomial in  variables described  above, with  coefficients rational  on $M$ and  holomorphic on $U_\lambda.$
It yields that
$$\omega(J_{k,\mathfrak{X}}f)\Big{|}_{f^{-1}(U_{\lambda})}=P_\lambda\Big{(}\frac{\mathfrak{X}^i(w_{\lambda j}\circ f)}{w_{\lambda j}\circ f},
\ \mathfrak{X}^h(w_{\lambda l}\circ f)\Big{)}.$$
Let $\{c_\lambda\}$ be a partition of unity subordinate to $\{U_\lambda\},$ which implies that
$$\omega(J_{k,\mathfrak{X}}f)=\sum_{\lambda}c_\lambda\circ f\cdot P_\lambda\Big(\frac{\mathfrak{X}^i(w_{\lambda j}\circ f)}{w_{\lambda j}\circ f},
\ \mathfrak{X}^h(w_{\lambda l}\circ f)\Big)$$
on $M.$ Since $P_\lambda$ is a polynomial, then
\begin{eqnarray*}
  &&\log^+\big{|}\omega(J_{k,\mathfrak{X}}f)\big{|} \\
  &\leq& O\Big{(}\sum_{i,j,\lambda}\log^+\Big|\frac{\mathfrak{X}^i(w_{\lambda j}\circ f)}{w_{\lambda j}\circ f}\Big|
  +\sum_{h,l,\lambda}\log^+\Big{(}c_\lambda\circ f\cdot\Big|\mathfrak{X}^h(w_{\lambda l}\circ f)\Big|\Big{)}\Big{)}+O(1).
\end{eqnarray*}
Note that $L_A>0,$  by virtue of Theorem \ref{ldl2} and Proposition \ref{v000}
\begin{eqnarray*}
&& m\big{(}r,\omega(J_{k,\mathfrak{X}}f)\big{)} \\
&\leq& O\Big{(}\sum_{i,j,\lambda}m\Big{(}r, \frac{\mathfrak{X}^i(w_{\lambda j}\circ f)}{w_{\lambda j}\circ f}\Big{)}+
\sum_{h,l,\lambda}m\Big{(}r, c_\lambda\circ f\cdot \mathfrak{X}^h(w_{\lambda l}\circ f)\Big{)}\Big{)}+O(1) \\
&\leq& O\Big{(}\log T_{f,A}(r)-\kappa(r)r^2+\log^+\log r\Big{)} \\
&&+O\Big{(}\sum_{h,l,\lambda}m\Big{(}r, c_\lambda\circ f\cdot \mathfrak{X}^h(w_{\lambda l}\circ f)\Big{)}\Big{)}.
\end{eqnarray*}
For the last term in the above inequality, we claim that 
\begin{eqnarray*}
 m\Big{(}r, c_\lambda\circ f\cdot \mathfrak{X}^h(w_{\lambda l}\circ f)\Big{)} 
 &\leq& O\Big{(}\log T_{f,A}(r)-\kappa(r)r^2+\log^+\log r\Big{)}.
\end{eqnarray*}
Equip $L_A$ with a Hermitian metric $h$ such that $c_1(L_A,h)>0$ and set
\begin{equation}\label{w1}
  f^*c_1(L_A,h)=\xi\alpha, \ \ \ \alpha=g\frac{\sqrt{-1}}{2\pi}dz\wedge d\bar{z}
\end{equation}
in a local holomorphic coordinate $z.$  $c_1(L_A,h)>0$ implies that there exists a constant  $B>0$ such that
\begin{equation}\label{w2}
  c_\lambda^2\circ f \cdot \frac{\sqrt{-1}}{2\pi}dw_{\lambda l}\circ f\wedge d\overline{w}_{\lambda l}\circ f\leq B f^*c_1(L_A,h).
\end{equation}
Write $\mathfrak{X}=a\frac{\partial}{\partial z},$ then $\|\mathfrak{X}\|^2=g|a|^2.$ Combine (\ref{w1}) with (\ref{w2}),  we have
$$c_\lambda^2\circ f\cdot \big{|}\mathfrak{X}(w_{\lambda l}\circ f)\big{|}^2\leq B\xi\|\mathfrak{X}\|^2.$$
By this with (\ref{ok}) and Proposition \ref{yyyy}
\begin{eqnarray*}
&&m\Big{(}r, c_\lambda\circ f\cdot \mathfrak{X}(w_{\lambda l}\circ f)\Big{)} \\
&\leq& \frac{1}{2} \mathbb E_o\big[\log^+\big{(}B\xi(X_{\tau_r})\|\mathfrak{X}(X_{\tau_r})\|^2\big)\big] \\
&\leq& \frac{1}{2}\mathbb E_o\big[\log^+\xi(X_{\tau_r})\big]+
\frac{1}{2}\mathbb E_o\left[\log^+\|\mathfrak{X}(X_{\tau_r})\|^2\right]+O(1) \\
&\leq& \frac{1}{2}\mathbb E_o\big[\log^+\xi(X_{\tau_r})\big]-\kappa(r)\mathbb E_o\big[\tau_r\big]+O(1) \\
&\leq& \frac{1}{2}\mathbb E_o\big[\log^+\xi(X_{\tau_r})\big]-\frac{\kappa(r)r^2}{2}+O(1).
\end{eqnarray*}
Indeed, it follows from Theorem \ref{cal} that
\begin{eqnarray*}
\mathbb E_o\big[\log^+\xi(X_{\tau_r})\big]
&\leq& \log^+\mathbb E_o\big[\xi(X_{\tau_r})\big]+O(1) \\
&\leq& \log^+\mathbb E_o\left [\int_0^{\tau_r}\xi(X_t)dt\right] \\
&&+
O\Big(\log^+\log \mathbb E_o\left[\int_0^{\tau_{r}}\xi(X_{t})dt\right]+r\sqrt{-\kappa(r)}+\log^+\log r\Big) \\
&\leq& \log T_{f,A}(r)+O\Big(\log^+\log T_{f,A}(r)+r\sqrt{-\kappa(r)}+\log^+\log r\Big)
\end{eqnarray*}
due to
\begin{eqnarray*}
\log^+\mathbb E_o\left [\int_0^{\tau_r}\xi(X_t)dt\right]&=& \log^+\int_{D(r)}g_r(o,x)\frac{ f^*c_1(L_A,h)}{\alpha}dV(x) \\
&\leq& \log T_{f,A}(r)+O(1).
\end{eqnarray*}
Hence,  for the case $h=1$
$$  m\Big{(}r, c_\lambda\circ f\cdot \mathfrak{X}(w_{\lambda l}\circ f)\Big{)}\leq O\Big{(}\log T_{f,A}(r)-\kappa(r)r^2+\log^+\log r\Big{)}.
$$
For the general case, we use mathematical induction.  
Assume that the claim holds for the case $h=k-1,$ then for  $h=k$
\begin{eqnarray*}
&&m\Big{(}r, c_\lambda\circ f\cdot \mathfrak{X}^k(w_{\lambda l}\circ f)\Big{)} \\
&\leq& m\Big{(}r, \frac {\mathfrak{X}^{k}(w_{\lambda l}\circ f}{\mathfrak{X}^{k-1}(w_{\lambda l}\circ f})\Big{)}+
m\Big{(}r, c_\lambda\circ f\cdot \mathfrak{X}^{k-1}(w_{\lambda l}\circ f)\Big{)} \\
&\leq& O\Big{(}\log T_{f,A}(r)+\log^+\log T_{f,A}(r)-\kappa(r)r^2+\log^+\log r\Big{)}
\end{eqnarray*}
due to Proposition \ref{hello1}.
Thus, the claim is confirmed. Combining the above, This proves the first assertion, and the second assertion is proved similarly   by replacing $\kappa(r)r^2$ by $-r.$
\end{proof}
\begin{cor} Let $f:\mathbb C\rightarrow M$ be a holomorphic curve
such that $f(\mathbb C)\not\subseteq D,$ where $D$ is a reduced divisor on $M.$  Then
$$m\big{(}r,\omega(J_{k,\mathfrak{X}}f)\big{)}\leq O\Big{(}\log T_{f,A}(r)+\log^+\log r\Big{)}  \big{\|}$$
for  $\omega\in H^0(M, E_{k,m}^{GG}T^*_M(\log D)).$
\end{cor}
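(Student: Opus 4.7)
The plan is to specialize Theorem \ref{xxx} to the parabolic case $S=\mathbb{C}$. I would equip $\mathbb{C}$ with the standard flat Hermitian metric $ds^2 = dz\,d\bar z$, which is complete and has Gauss curvature $K_{\mathbb{C}} \equiv 0$, and take $\mathfrak{X} = \partial/\partial z$ as the nowhere-vanishing holomorphic vector field. As remarked in Section 2.1, the probabilistic Nevanlinna functions defined via the Green function and harmonic measure coincide with the classical Nevanlinna functions in this setting, so $T_{f,A}(r)$ in the bound is the usual characteristic function on $\mathbb{C}$ and $m(r,\omega(J_{k,\mathfrak{X}}f))$ is the usual proximity function.

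By definition (\ref{kappa}), one has
$$\kappa(r) = \min\{K_{\mathbb{C}}(x) : x \in \overline{D(r)}\} \equiv 0$$
for every $r \geq 0$. Applying Theorem \ref{xxx} directly and inserting $\kappa(r)\equiv 0$ kills the term $-\kappa(r) r^2$ in the estimate, leaving
$$m\big(r,\omega(J_{k,\mathfrak{X}}f)\big)\leq O\Big(\log T_{f,A}(r)+\log^+\log r\Big) \big{\|},$$
which is exactly the claim. There is no substantive obstacle here, since the heavy machinery (logarithmic Wronskian identities, Hironaka desingularization, the Calculus Lemma, and the LDL for meromorphic functions) has already been deployed in the proof of Theorem \ref{xxx}; the corollary merely records the shape of the general bound when the ambient geometry of $S$ is flat.
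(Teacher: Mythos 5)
Your proposal is correct and is exactly the intended derivation: the paper gives no separate proof for this corollary because it is precisely Theorem \ref{xxx} specialized to $S=\mathbb{C}$ with the flat metric, where $\kappa(r)\equiv 0$ kills the curvature term, and (as noted in Section 2.1) the probabilistic Nevanlinna functions reduce to the classical ones. This is the same one-line specialization the paper uses to deduce Nochka's corollary from Theorem \ref{thm}.
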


\subsection{Proof of Theorem \ref{thm2}}~
\begin{proof} By lifting $f$ to the universal covering, we may assume that $S$ is simply connected.
Since $A$ is ample, then there is an integer $n_0>0$ such that $n_0A$ is very ample. Hence, $L_{n_0A}$ induces a holomorphic embedding of $M$ into
a complex projective space.
Viewing $M$ as an algebraic submanifold of $\mathbb P^N(\mathbb C)$ with homogeneous coordinate system  $[w_0;\cdots:w_N],$
 i.e., there is an inclusion $i:M\hookrightarrow\mathbb P^N(\mathbb C)$ for some  $N.$
Let $E$ be the restriction  of the hyperplane line bundle over $\mathbb P^N(\mathbb C)$  to $M.$
By virtue of Theorem \ref{thm}, for any $0<\epsilon<1,$ there exists a hyperplane $H$ of $\mathbb P^N(\mathbb C)$ such that
$$(1-\epsilon)T_{f,E}(r)\leq N_{f}(r, i^*H)+O\Big(\log T_{f,E}(r)-\kappa(r)r^2+\log^+\log r\Big).$$
\ \ \ \ By replacing $\omega$ by $(\omega/\|s_A\|)^{n_0}i^*\hat{H},$ where $s_A$ is the canonical line bundle associated to $A$ and $\hat{H}$ is the defining
function of $H,$
we may assume without loss of generality that  $A=i^*H$ and $n_0=1.$  Namely,  we have
$$(1-\epsilon)T_{f,A}(r)\leq N_{f}(r,A)+O\Big(\log T_{f,A}(r)-\kappa(r)r^2+\log^+\log r\Big).$$
By the assumption that $\kappa(r)r^2/T_{f,A}(r)\rightarrow0$ as $r\rightarrow\infty,$ it yields that 
\begin{equation}\label{dfd}
  (1-\epsilon)T_{f,A}(r)\leq  N_{f}(r,A)+o\big(T_{f,A}(r)\big).
\end{equation}
If $f^*\omega\not\equiv0$ on $S,$ then the condition  $\omega|_A\equiv 0$ implies that
$$ f^*A\leq dd^c \big[\log |\omega(J_{k,\mathfrak{X}}f)|^2\big]$$
in the sense of currents. By Dynkin formula and Theorem \ref{xxx}
$$N_f(r,A)\leq m\big{(}r,\omega(J_{k,\mathfrak{X}}f)\big{)}+O(1)\leq o\big(T_{f,A}(r)\big),$$
which contradicts with (\ref{dfd}). This proves the first assertion,  and the second assertion  is proved  similarly by replacing $\kappa(r)r^2$ by $-r.$
\end{proof}

\section{Bloch's theorem}

In this section, we extend  Bloch's theorem \cite{bloch}, following P${\rm{\check{a}}}$un-Sibony \cite{Sibony-Paun} and Ru-Sibony \cite{Ru-Sibony}.

Let $T$ be a complex torus of complex dimension $n,$  fix a  nowhere-vanishing holomorphic vector field $\frak X$ over
$S$. Then the $k$-jet bundle over $T$ with respect to $\frak X$ is trivial, i.e.,
$J_k(T,\frak X)\cong T\times \CC^{nk}$.  Let $P(J_k(T,\frak X))$ be the
projective bundle of $J_k(T,\frak X)$, then we have
$P(J_k(T,\frak X))\cong T\times \PP^{nk-1}(\CC)$. Let $X$ denote the Zariski
closure of $f(S),$ and $X_k\subseteq P(J_k(T,\frak X))$ denote the
Zariski closure of $J_{k,\frak X}f(S).$  Let $\pi_k:X_k\to \PP^{nk-1}(\CC)$ be the
second projection.

\begin{lemma}[Ueno, \cite{Ueno}]\label{thm:ueno}
Let $V$ be a subvariety of  $T$. There exists a
complex torus $T_1\subseteq T$, a projective
variety $W$ and an Abelian variety $A$ such that

${\rm{(i)}}$ $W\subseteq A$ and $W$ is of general type$;$

${\rm{(ii)}}$ There exists a reduction mapping $R:V\to W$ such that whose general
fiber is isomorphic to $T_1$.
\end{lemma}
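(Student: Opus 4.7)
The plan is to take $T_1$ to be the identity component of the stabilizer of $V$ in $T$, quotient it out, and then invoke Ueno's fibration principle to conclude the image has trivial stabilizer and is consequently of general type; finally passing to its Albanese image lands the image inside an abelian variety.

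First, I would define the stabilizer $H := \{t \in T : t + V = V\}$, which is a closed complex Lie subgroup of $T$, and let $T_1 := H^0$ be its identity component, a complex subtorus of $T$. By construction $V$ is $T_1$-invariant, so it is a union of $T_1$-cosets and the holomorphic quotient map $\pi : T \to T/T_1 =: T'$ restricts to a surjective proper map $\pi|_V : V \to V' := \pi(V)$ whose generic fiber is a translate of $T_1$. This already provides the fibration demanded by (ii), once $V'$ is identified with $W$ up to a birational modification.

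Second, by the maximality of $T_1$ inside $H$, the stabilizer of $V'$ in $T'$ has trivial identity component. The main step is then to deduce that $V'$ is of general type, i.e.\ that $\kappa(V') = \dim V'$. This is the heart of Ueno's theorem and the expected main obstacle. The strategy is to exploit the triviality of $\Omega^1_{T'}$: any local symmetric differential on $V'$ extends to a global section of $\mathrm{Sym}^m \Omega^1_{V'}$ up to restricting from $T'$, and a deficit in Kodaira dimension would force a positive-dimensional family of translates of $V'$ inside itself, contradicting the triviality of the stabilizer identity component. Alternatively, one can apply Kawamata's theorem on the Iitaka conjecture $C_{n,m}$ over abelian bases together with the easy addition inequality to reach the same conclusion.

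Finally, with $V'$ shown to be of general type, I would take its Albanese morphism $\alpha : V' \to \mathrm{Alb}(V')$. Since $V'$ is of general type, $\alpha$ is generically finite onto its image. Setting $A := \mathrm{Alb}(V')$, which is an abelian variety, $W := \alpha(V') \subseteq A$, which is projective and of general type, and $R := \alpha \circ \pi|_V : V \to W$, the required conclusions (i) and (ii) follow, with the general fiber of $R$ being isomorphic to $T_1$ as desired.
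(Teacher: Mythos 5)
The paper does not prove this lemma; it is cited from Ueno's monograph \cite{Ueno} and used as a black box in the proof of Theorem \ref{thm4}, so there is no in-paper argument to measure your attempt against. Your outline reproduces the standard skeleton of Ueno's proof --- quotient by the identity component of the stabilizer, show the quotient is of general type, map into an abelian variety --- but the two load-bearing steps are asserted rather than proved. The larger gap is the implication that finiteness of the stabilizer of $V'$ in $T'$ forces $V'$ to be of general type: you acknowledge this is the heart of the theorem and then leave it as a gesture. The remark that symmetric differentials restrict from $T'$ does not explain why the absence of a positive-dimensional stabilizer produces enough twisted sections of $\mathrm{Sym}^m\Omega^1_{V'}$, nor why a Kodaira-dimension deficit would create translates of $V'$ inside itself; and naming Kawamata's $C_{n,m}$ over abelian bases merely points at the right tool without running the argument (building the Iitaka fibration of $V'$, identifying its base as a subvariety of a quotient torus, and closing the induction).

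The second gap is the claim that $A:=\mathrm{Alb}(V')$ is an abelian variety. In the setting of Theorem \ref{thm4}, $T$ is an arbitrary complex torus, so $T'=T/T_1$ need not be projective, and $V'$ is a priori only a compact analytic subvariety of a compact K\"ahler torus; the Albanese of a desingularization of $V'$ is then only a complex torus. That this torus is in fact abelian --- equivalently, that $V'$ is Moishezon --- is itself part of what Ueno proves, typically via the algebraic reduction of $T'$, and cannot be assumed. By contrast, your worry about generic finiteness versus birationality is moot: the inclusion $V'\hookrightarrow T'$ factors through the Albanese of a desingularization by the universal property, so the Albanese map is automatically birational onto its image, and the general fibre of $R$ really is a single translate of $T_1$ as required.
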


\begin{lemma}[Sibony-P${\rm{\check{a}}}$un, \cite{Sibony-Paun}]\label{lem:+dimfiber}
 Assume that for each $k\geq 1$,
the fibers of $\pi_k$ are  positive dimensional. Then the subgroup $A_X$
of $X$ defined by
$$A_X:=\big\{a\in T:a+X=X\big\}$$
is  strictly positive dimensional.
\end{lemma}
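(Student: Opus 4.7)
The plan is to exploit the parallelizability of the torus $T$: the trivial tangent bundle forces the identifications $J_k(T,\mathfrak{X}) \cong T \times \CC^{nk}$ and $P(J_k(T,\mathfrak{X})) \cong T \times \PP^{nk-1}(\CC)$ to be translation-equivariant, so every translation $t_a$ on $T$ lifts to $t_a^{[k]}:(x,[v])\mapsto(x+a,[v])$, acting trivially on the jet-direction factor. Therefore the positive-dimensionality of fibers of $\pi_k$ should convert, via a limiting argument in $k$, into an actual continuous family of translations preserving $X$.

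First I would produce a candidate for the Lie algebra of the stabilizer. For each $k$, pick a generic $[v_k]\in\pi_k(X_k)$, let $F_k$ be the first-coordinate projection of an irreducible component of $\pi_k^{-1}([v_k])\cap X_k$ of maximal (positive) dimension, and let $B_k\subseteq\CC^n$ denote the tangent space of $F_k$ at a smooth point, viewed as a subspace of $\CC^n$ via the canonical parallelization $TT\cong T\times\CC^n$. Then $\dim_{\CC}B_k\geq 1$. The forgetful maps $P(J_{k+1}(T,\mathfrak{X}))\to P(J_k(T,\mathfrak{X}))$ induce dominant maps $X_{k+1}\dashrightarrow X_k$ carrying $\pi_{k+1}$-fibers into $\pi_k$-fibers, and choosing the $[v_k]$'s compatibly along this tower gives $B_{k+1}\subseteq B_k$. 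Since $\{B_k\}$ is a decreasing chain of positive-dimensional complex subspaces of the finite-dimensional space $\CC^n$, it stabilizes at some $B_\infty$ with $\dim_{\CC}B_\infty\geq 1$. Let $\widetilde B\subseteq T$ be the closed complex Lie subgroup whose Lie algebra is $B_\infty$, a positive-dimensional subtorus.

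Finally I would show $\widetilde B\subseteq A_X$. Fix $k$ in the stabilized range. At a smooth point $a\in F_k$ and for $b\in B_\infty$ small, the point $a+b$ also lies in $F_k$, so $(a+b,[v_k])\in X_k$. By translation-equivariance of the trivialization, this means that the $k$-jet direction of (a branch of) $X$ at $a+b$ coincides with that at $a$; equivalently, $t_b$ preserves the $k$-jet of $X$ at $a$. Taking $k$ arbitrarily large in the stabilized range, $t_b$ preserves germs of $X$ at $a$ to all orders, hence preserves the germ itself, and by irreducibility of $X$ we get $t_b(X)=X$ globally. Integrating over $b\in B_\infty$ yields $\widetilde B\subseteq A_X$, so $A_X$ is strictly positive-dimensional.

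The main obstacle is the last step: rigorously converting ``matching $k$-jets on $F_k$ for all large $k$'' into ``translation preserves the analytic germ of $X$''. This relies crucially on both the translation-equivariance of the torus trivialization (so that shifting the base point in $T$ does not distort the jet coordinates in $\CC^{nk}$) and on the fact that $X$ is obtained as the Zariski closure of the curve $f$, so that the varieties $X_k$ genuinely control the germs of $X$ at generic points; without either ingredient, fiber-positivity of $\pi_k$ would not suffice to detect a translation symmetry of $X$.
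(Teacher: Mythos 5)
The paper does not actually prove this lemma; it is quoted verbatim from Sibony--P\u{a}un and used as a black box, so there is no in-paper proof to compare against. On the substance of your attempt, though, there is a genuine gap at the crucial last step.

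You define $B_k$ as the tangent space to $F_k$ at a chosen smooth point $a$, let the decreasing chain stabilize at $B_\infty$, and then assert that ``for $b\in B_\infty$ small, the point $a+b$ also lies in $F_k$.'' This does not follow: $B_\infty$ being tangent to $F_k$ at $a$ says nothing about $F_k$ containing the affine slice $a+B_\infty$, which would require $F_k$ to be linear (a translate of a subtorus) near $a$ --- and establishing precisely that kind of linearity is the whole content of the lemma. For a generic curved $F_k$ the translation $t_b$ moves $a$ off $F_k$, so the conclusion $(a+b,[v_k])\in X_k$ fails and the entire jet-matching argument collapses. There are also secondary issues you would need to address: $B_k$ depends on the chosen smooth point, so the nesting $B_{k+1}\subseteq B_k$ requires coherent choices through the tower; and the image of $B_\infty$ under $\CC^n\to T$ need not be closed, so $\widetilde B$ must be defined as a closure.

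The known route (Bloch, as implemented by Sibony--P\u{a}un) works with the fibers $F_k$ themselves rather than with their tangent spaces. One takes the stabilizing member $F_\infty$ of the decreasing chain of generic $\pi_k$-fibers projected to $T$. For two points $x,x'\in F_\infty$, the trivialization of $J_k(T,\mathfrak X)$ by the group translations shows that the $k$-jet data carried at $x$ and at $x'$ agree after recentering, for every $k$ in the stabilized range; this is exactly what membership in a common $\pi_k$-fiber encodes, with no tangent-space approximation. Letting $k\to\infty$, the germ of $X$ at $x$ transported by $x'-x$ matches the germ of $X$ at $x'$, and irreducibility gives $(x'-x)+X=X$. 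Thus $A_X$ contains the difference set $F_\infty-F_\infty$, which is positive-dimensional because $F_\infty$ is. Replacing your linearization-by-tangent-space step with this difference-set argument is what closes the gap.
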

\begin{lemma}[Ru-Sibony, \cite{Ru-Sibony}]\label{prop:existjet}
Assume that $\pi_k:X_k\to \PP^{nk-1}(\CC)$ has finite generic fibers for some $k>0$. Then there is a
$k$-jet differential $\omega$ with values in the dual of an ample line bundle such that $\omega\not\equiv0$ on $X_k$.
\end{lemma}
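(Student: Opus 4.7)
The plan is a pushforward/Serre-vanishing construction. The starting point is that $X_k$ sits inside the trivial bundle $P(J_k(T,\mathfrak X))\cong T\times\PP^{nk-1}(\CC)$, and via the triviality $J_k(T,\mathfrak X)\cong T\times\CC^{nk}$ a $k$-jet differential on $T$ with values in a line bundle $L$ is encoded as a polynomial expression in the jet coordinates valued in $L$, which in turn corresponds to a global section of a line bundle of the shape $p_2^*\Ocal(m)\otimes p_1^*L$ on $T\times\PP^{nk-1}(\CC)$, where $p_1$ and $p_2=\pi_k$ are the two projections. Taking $L=L_A^{-1}$ for an ample $L_A$ on $T$, my goal reduces to producing a nonzero global section of such a line bundle over $X_k$ for a suitable $m$.

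To produce the section, I would push forward along $\pi_k|_{X_k}$: form the coherent sheaf
$$\mathcal{F}:=\pi_{k*}\big(p_1^*L_A^{-1}\big|_{X_k}\big)$$
on $\PP^{nk-1}(\CC)$. The hypothesis that $\pi_k|_{X_k}$ has finite generic fibers forces $\mathcal{F}$ to be nonzero: for a general $y\in\pi_k(X_k)$ the stalk $\mathcal{F}_y$ is a direct sum, indexed by the finite set $\pi_k^{-1}(y)\cap X_k$, of fibers of the line bundle $L_A^{-1}$, hence of positive rank. Serre vanishing applied to the ample bundle $\Ocal_{\PP^{nk-1}(\CC)}(1)$ then yields $m\gg 0$ with $H^0(\PP^{nk-1}(\CC),\mathcal{F}\otimes\Ocal(m))\neq 0$, and the projection formula
$$H^0\big(\PP^{nk-1}(\CC),\mathcal{F}\otimes\Ocal(m)\big)\;\cong\;H^0\big(X_k,\pi_k^*\Ocal(m)\otimes p_1^*L_A^{-1}\big|_{X_k}\big)$$
turns this into the desired $\omega$, a section of $\pi_k^*\Ocal(m)\otimes p_1^*L_A^{-1}$ on $X_k$ that is not identically zero and whose coefficient bundle $L_A^{-1}$ is the dual of an ample line bundle, as required.

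The hard part will be converting the abstract section of $\pi_k^*\Ocal(m)\otimes p_1^*L_A^{-1}|_{X_k}$ produced above into a genuine $k$-jet differential in the sense of Section~5.1, that is, a weighted-homogeneous polynomial in the $d^i\zeta_j$ with values in $L_A^{-1}$ whose pullback $\omega(J_{k,\mathfrak X}f)$ can be plugged into Theorem~\ref{thm2} in the proof of Theorem~\ref{thm4}. The dictionary is clean on the ambient $T\times\PP^{nk-1}(\CC)$ because $J_k(T,\mathfrak X)$ is trivial, but two bookkeeping issues remain: matching the unweighted $\Ocal(m)$-grading on $\PP^{nk-1}(\CC)$ with the weighted jet-differential grading used in the definition (handled by replacing $m$ with a suitable multiple and regrouping monomials), and transferring a section that a priori lives only on $X_k$ into a jet differential whose restriction to $X_k$ is the original one. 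Neither obstacle appears fundamental once the pushforward framework is in place.
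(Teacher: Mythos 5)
The paper states this lemma as a citation to \cite{Ru-Sibony} without supplying a proof, so there is no internal argument to compare against; your proposal reconstructs what is essentially the standard argument from \cite{Ru-Sibony} and \cite{Sibony-Paun}, repackaged as a pushforward computation on $\PP^{nk-1}(\CC)$ rather than via bigness of $\pi_k^*\Ocal(1)$ on a desingularization of $X_k$. The two are equivalent by the projection formula $H^0\big(X_k,\pi_k^*\Ocal(m)\otimes p_1^*L_A^{-1}|_{X_k}\big)\cong H^0\big(\PP^{nk-1}(\CC),\mathcal{F}\otimes\Ocal(m)\big)$, and your phrasing has the mild advantage of not requiring a resolution of $X_k$, since Serre's theorems apply to coherent sheaves on arbitrary projective schemes. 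Two places to tighten: first, Serre \emph{vanishing} alone does not force $H^0(\PP^{nk-1}(\CC),\mathcal{F}\otimes\Ocal(m))\neq 0$; what you actually need is Serre's theorem on global generation of $\mathcal{F}\otimes\Ocal(m)$ for $m\gg 0$, or else Serre vanishing combined with the observation that $\chi(\mathcal{F}\otimes\Ocal(m))$ is a polynomial in $m$ with positive leading coefficient once $\mathcal{F}\neq 0$, hence $h^0=\chi>0$ eventually. Second, the \emph{fiber} (not the stalk) of $\mathcal{F}$ has the direct-sum description you give, and only over a flat locus of $\pi_k|_{X_k}$; but the nonvanishing $\mathcal{F}\neq 0$ holds for any proper morphism generically finite onto its image, which is all you use. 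The weighted-versus-unweighted degree issue you flag at the end is resolved by decomposing the degree-$m$ section into its weighted-homogeneous pieces and selecting one that is nonzero on $X_k$; each piece retains the $L_A^{-1}$-coefficient bundle, producing a genuine $k$-jet differential with values in the dual of an ample line bundle, as needed to feed into Theorem~\ref{thm2}.
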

Now we prove a generalized Bloch's theorem, i.e., Theorem \ref{thm4}:
\begin{proof}
Assume that $X$ is not the translate of a subtorus.
Let $R:X\to W$ be the reduction mapping in Lemma \ref{thm:ueno}. Then $W$
is not a point since otherwise $X$ would be the translation of a subtorus.
Now we claim that $R\circ f$ cannot satisfy (\ref{th3}).
If not, then there will be  two possible cases:
\begin{enumerate}
\item For any $k\geq 1$, the fibers of $\pi_k$ are positive dimensional;
\item There exists a $k$ such that $\pi_k:X_k\to \PP^{mk-1}(\CC)$ is
generically finite.
\end{enumerate}

If the case (i) happens, then from Lemma \ref{lem:+dimfiber}, $X$ is
stabilized  by a positive-dimensional subtorus of $T$. But we assumed that $X$
is of general type, then it cannot happen since the automorphism group
of $X$ must be of finite order.

If the case (ii) happens, then $X_k$ is algebraic.
By Lemma \ref{prop:existjet},
there exists   a nonzero $k$-jet differential $\omega\in H^0(X_k, E_{k,m}^{GG}T^*_{X_k}\otimes\mathscr O(-A))$
for some ample line bundle $A$, but  Theorem \ref{thm2} gives a contradiction. So,  (ii) cannot
happen.
 We have arrived at a contradiction in both cases. Thus, $R\circ f$ cannot satisfy
(\ref{th3}).
\end{proof}

\vskip\baselineskip

\label{lastpage-01}

\vskip\baselineskip
\vskip\baselineskip

\end{document}